\documentclass[leqno,10pt]{article}
\usepackage{amsfonts,amsmath,amssymb}
\usepackage{amsthm}
\usepackage{enumerate}
\usepackage{graphics}
\usepackage{graphicx}
\usepackage{float}%
\usepackage{enumerate}%

\numberwithin{equation}{section} %
\numberwithin{table}{section}
%
%
%
%
%
%
\newcommand{\ruby}[2]{%
\leavevmode
\setbox0=\hbox{#1}%
\setbox1=\hbox{\tiny #2}%
\ifdim\wd0>\wd1 \dimen0=\wd0 \else \dimen0 =\wd1 \fi
\hbox{%
\kanjiskip=0pt plus 2fil
\xkanjiskip=0pt plus 2fil
\vbox {%
\hbox to \dimen0{%
\tiny \hfil#2\hfil}%
\nointerlineskip
\hbox to \dimen0{\mathstrut\hfil#1\hfil}}}}
  \newtheorem{theorem}{Theorem}[section] %
  \newtheorem{proposition}[theorem]{Proposition}%
  \newtheorem{lemma}[theorem]{Lemma}%
  \newtheorem{corollary}[theorem]{Corollary}%
  \newtheorem{fact}[theorem]{Fact}
  \newtheorem{observation}[theorem]{Observation}
  \newtheorem{definition}[theorem]{Definition}%
  \newtheorem{example}[theorem]{Example}%

\newtheorem{def-thm}[theorem]{Definition-Theorem}
\newtheorem{def-prop}[theorem]{Definition-Proposition}
\newtheorem{def-lemma}[theorem]{Definition-Lemma}

\newtheorem{setting}[theorem]{Setting}
  \newtheorem{remark}[theorem]{Remark}%
\pagestyle{plain} %
\newcommand{\invHom}[3]{\operatorname{Hom}_{#1}({#2},{#3})}

\newcommand{\gpR}[1]{{#1}}
\newcommand{\gpC}[1]{{#1}_{\mathbb{C}}}
\newcommand{\gpcpt}[1]{{#1}_{U}}
\newcommand{\subgpR}[1]{{#1}'}
\newcommand{\subgpC}[1]{{#1}_{\mathbb{C}}'}
\newcommand{\subgpcpt}[1]{{#1}_{U}'}
\newcommand{\dual}[1]{\widehat{#1}}
\newcommand{\Disc}{\operatorname{Disc}}
\newcommand{\Hom}{\operatorname{Hom}}
\newcommand{\diag}{\Delta}
\begin{document}
\title
{Global analysis by hidden symmetry
\\[2ex]
\textit{\normalsize
Dedicated to Roger Howe on the occasion of his 70th birthday
}
\\
\vskip -1pc
\textit
{\normalsize
  with admire on his original contributions
 to the fields
}
} %
\author{Toshiyuki Kobayashi
\\
        Kavli IPMU 
\\ and Graduate School of Mathematical Sciences, 
\\
The University of Tokyo}
\date{} %
\maketitle %

\begin{abstract}
Hidden symmetry of a $G'$-space $X$ is defined by an extension
 of the $G'$-action on $X$ to that of a group $G$ containing $G'$
 as a subgroup.  
In this setting, 
 we study the relationship
 between the three objects:
\begin{enumerate}
\item[(A)] global analysis on $X$
 by using representations of $G$ (hidden symmetry);
\item[(B)] global analysis on $X$
 by using representations of $G'$;
\item[(C)] branching laws of representations
 of $G$ 
 when restricted to the subgroup $G'$.   
\end{enumerate}
We explain a trick
 which transfers results
 for finite-dimensional representations
 in the compact setting 
 to those for infinite-dimensional representations
 in the noncompact setting
 when $\gpC X$ is $\subgpC G$-spherical.  
Applications to branching problems
 of unitary representations, 
 and to spectral analysis on pseudo-Riemannian locally symmetric spaces
 are also discussed.  
\end{abstract}
\noindent
\textit{Keywords and phrases:}
reductive group, 
branching law, 
hidden symmetry, 
spherical variety, 
 locally symmetric space, 
 invariant differential operator

\medskip
\noindent
\textit{2010 MSC:}
Primary
22E46; 
Secondary
53C35. 

\setcounter{tocdepth}{1}
\tableofcontents

\section{Introduction}
In the late 80s, 
 I was working on two \lq\lq{new}\rq\rq\ different topics of study:
\begin{enumerate}
\item[(I)](geometry)\enspace
actions of discrete groups
 on {\it{pseudo-Riemannian}} homogeneous spaces
 \cite{kob89}, 
\item[(II)](representation theory)\enspace
restriction of unitary representations
 to {\it{noncompact}} subgroups
\cite{xkInvent94},
\end{enumerate}
and trying
 to find criteria
 for the setting 
 that will assure the following \lq\lq{best properties}\rq\rq:
\begin{enumerate}
\item[$\bullet$] properly discontinuous actions
 ({\it{v.s.}} ergodic actions, {\it{etc.}}) for (I), 
\item[$\bullet$] discretely decomposable restrictions
 ({\it{v.s}}. continuous spectrum) for (II), 
\end{enumerate}
respectively. 
The techniques in solving these problems
 were quite different.

Roger Howe visited Kyoto in 1994, 
 and raised a question about 
 how these two topics are connected
 to each other in my mind.  
Philosophically,
 there is some similarity in (I) and (II):
 proper actions are \lq\lq{compact-like}\rq\rq\ actions
 on locally compact topological spaces,
 whereas we could expect
 that there are also \lq\lq{compact-like}\rq\rq\
 {\it{linear}} actions on Hilbert spaces
 such as discretely decomposable unitary representations
 \cite{xkdecoaspm}, 
 see also Margulis \cite{Mar}.  
The aim of this paper
 is to give another answer to his question rigorously from the viewpoint
 of \lq\lq{analysis with hidden symmetries}\rq\rq\
 by extending the half-formed idea of \cite{kob09} which is based on the
observation as below: our first example
 of discretely decomposable restrictions
 of nonholomorphic discrete series representations
 \cite{xk:1}
 arose from the geometry of the three-dimensional open complex manifold $\gpR X={\mathbb{P}}^{1,2}{\mathbb{C}}$
 (see Section \ref{subsec:Sunada})
 satisfying the following two properties:
\begin{enumerate}
\item[$\bullet$]
the de Sitter group $\subgpR G=Sp(1,1) \simeq Spin(4,1)$ acts properly on $\gpR X$, 
 and consequently,
 there exists a three-dimensional compact indefinite 
K{\"a}hler manifold
 as the quotient of $X$
 by a cocompact discrete subgroup of $\subgpR G$
\cite{kob89};
\item[$\bullet$]
any irreducible unitary representation of the conformal group 
 $\gpR G=U(2,2) (\doteqdot S^1 \times SO(4,2))$
 that is realized in the regular representation $L^2(\gpR X)$ is 
 discretely decomposable 
 when restricted to the subgroup $\subgpR G$
 \cite{xk:1, xkInvent94}.  
\end{enumerate}

In this example, we see {\it{hidden symmetry}} of $X$:
\begin{equation}
\label{eqn:overG}
\text{$\gpR X$ is a homogeneous space of $\subgpR G$, 
but also that of an overgroup $\gpR G$.  }
\end{equation}
Furthermore, 
 $X={\mathbb{P}}^{1,2}{\mathbb{C}}$ has a quaternionic Hopf fibration
\begin{equation}
\label{eqn:FXYr}
 \gpR F \to \gpR X \to \gpR Y
\end{equation}
over the four-dimensional ball $\gpR Y$ 
 which is the Riemannian symmetric space 
 associated to $\subgpR G$
 with typical fiber $\gpR F \simeq S^2$
 (see Section \ref{subsec:Sunada}).

\vskip 1pc
More generally,
 we shall work with the setting
 \eqref{eqn:overG}
 of hidden symmetry for a pair
 of real reductive Lie groups $\gpR G \supset \subgpR G$.  
The subject of this article
 is in three folds: 
\newline\indent
(A)
global analysis on $\gpR X$
 by using representations of $\gpR G$;
\newline\indent
(B)
global analysis on $\gpR X$
 by using representations of $\subgpR G$;
\newline\indent
(C)
 branching laws
 of representations of $\gpR G$
 when restricted to $\subgpR G$.

A key role is played by a maximal reductive subgroup
 $\subgpR L$ of $\subgpR G$ 
 containing $\subgpR H$, 
 which induces a $\subgpR G$-equivariant fibration $\gpR F \to \gpR X \to \gpR Y$
 (see \eqref{eqn:FXYR}).  

\vskip 1pc
In Section \ref{sec:gen}, 
we study (B) in two ways:
\begin{enumerate}
\item[$\bullet$]
(A) $+$ (C), 
\item[$\bullet$]
analysis of the fiber $\gpR F$
 and the base space $\gpR Y$, 
\end{enumerate}
 and deduce a double fibration
in Theorem \ref{thm:20160506}
 for the \lq\lq{discrete part}\rq\rq\
 of the unitary representations of the groups $\gpR G$, $\subgpR G$ and $\subgpR L$
 arising naturally
 when $\gpR X$ is $\subgpR G$-real spherical
 (Definition \ref{def:realsp}).  
Assuming a stronger condition 
 that the complexification $\gpC X$ is $\subgpC G$-spherical 
 (Definition-Theorem \ref{defthm:sp}), 
 we analyze 
 the ring ${\mathbb{D}}_{\subgpC G}(\gpC X)$
 of $\subgpC G$-invariant holomorphic differential operators
 on the complexification $\gpC X$
 by three natural subalgebras $\mathcal P$, 
 ${\mathcal{Q}}$, and ${\mathcal{R}}$
 defined in \eqref{eqn:P}--\eqref{eqn:R}
 which commute with $\gpC G$, $\subgpC G$, 
 and $\subgpC L$, 
respectively 
 (see Theorems \ref{thm:A} and \ref{thm:B}).  
Turning to real forms $\gpR F \to \gpR X \to \gpR Y$, 
 the relationship between (A), (B), and (C) is formulated 
 utilizing the algebras
 $\mathcal P$, ${\mathcal{Q}}$, and ${\mathcal{R}}$.

Moreover,
 an application of the relationship
\[
\text{(A) and (B) $\Rightarrow$ (C)}
\]
 will be discussed in Section \ref{sec:sphbr}
 that includes a new branching law of Zuckerman's derived functor module
 $A_{\mathfrak{q}}(\lambda)$ 
 with respect to a nonsymmetric pair
 (Theorem \ref{thm:Spin}).  
An application of the relationship
\[
\text{(A) and (C) $\Rightarrow$ (B)}
\]
 was studied in \cite{xkInvent94, xkdisc}
 for the existence problem
 of discrete series representation 
 for {\it{nonsymmetric}} homogeneous spaces.  
Spectral analysis
 on non-Riemannian locally symmetric spaces
 is discussed in Section \ref{sec:5} 
 as an application of the relationship
\[
\text{(B) and (C) $\Rightarrow$ (A)}. 
\]

This scheme explains the aforementioned example
 as a special case of the following general results:  
\begin{alignat*}{4}
&\text{$\subgpR H$ is compact}
&&\Longrightarrow \,\,
&&\text
{(I) $X$ admits compact Clifford-Klein forms}
\quad
&&\text{(Proposition \ref{prop:proper})}
\\
&\quad \Downarrow \text{Remark \ref{rem:Fcpt}}
&&
&&
&&
\\
&\text{$\gpR F$ is compact}
&&\Longrightarrow 
&&
\text
{(II) discretely decomposable restrictions}
&&
\text
{(Theorem \ref{thm:deco})}
\end{alignat*}

\vskip 1pc

The results of Sections \ref{sec:2}--\ref{sec:example}
 and Section \ref{sec:5}  will be developed 
  in the forthcoming papers
 \cite{KKinv} (for compact real forms) 
 and \cite{KKspec} with F.~Kassel, 
 respectively.

\section{Analysis
 on homogeneous spaces with hidden symmetry}
\label{sec:gen}
In this section,
 we introduce a general framework
which relates branching problems
 of unitary representations
 and global analysis on homogeneous spaces
 with hidden symmetries.

\subsection{Homogeneous space with hidden symmetry}
\label{subsec:overgroup}
Suppose that $\gpR G$ is a group,
 and $\subgpR G$ its subgroup.  
If $\gpR G$ acts on a set $\gpR X$, 
 then the subgroup $\subgpR G$ acts on $\gpR X$
 by restriction ({\it{broken symmetry}}).  
Conversely,
 if we regard $\gpR X$ 
 as a $\subgpR G$-space,
 the $\gpR G$-action on $\gpR X$
 is said to be a {\it{hidden symmetry}}, 
 or $\gpR G$ is an {\it{overgroup}} of $(\subgpR G, \gpR X)$.  
In the setting that $\subgpR G$ acts transitively on $\gpR X$, 
 the action of the overgroup
 $\gpR G$ is automatically transitive.  
This happens 
 when $\gpR X=\gpR G/\gpR H$
 for some subgroup $\gpR H$ of $\gpR G$
 such that $\gpR G = \subgpR G \gpR H$.

We denote by $\gpC {\mathfrak{g}}$, 
 $\subgpC {\mathfrak{g}}$, 
 and $\gpC {\mathfrak{h}}$
 the complexified Lie algebras 
 of Lie groups $\gpR G$, $\subgpR G$, 
 and $\gpR H$, respectively.

To find the above setting for reductive groups, 
the following criterion is useful.  
\begin{lemma}
[{\cite[Lemma 5.1]{xkInvent94}}]
\label{lem:1.1}
Suppose that $\gpR G$, 
 $\subgpR G$, 
 and $\gpR H$ are real reductive groups.  
We set 
$
   \subgpR H := \subgpR G \cap \gpR H
$
 and 
$
  \gpR X = \gpR G/ \gpR H.  
$
Then the following three conditions on the triple
$(\gpR G, \subgpR G, \gpR H)$
 are equivalent:
\begin{enumerate}
\item[{\rm{(i)}}]
the natural injection
$
\subgpR G/\subgpR H
\hookrightarrow
\gpR X
$
is bijective;
\item[{\rm{(ii)}}]
$\subgpR G$ has an open orbit in $\gpR X$; 
\item[{\rm{(iii)}}]
$\subgpC {\mathfrak {g}}+ \gpC {\mathfrak {h}}=\gpC {\mathfrak {g}}$.  
\end{enumerate}
\end{lemma}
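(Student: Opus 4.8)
\emph{Proof strategy.} Write $\mathfrak g,\mathfrak g',\mathfrak h$ for the Lie algebras of $\gpR G,\subgpR G,\gpR H$, so that $\gpC{\mathfrak g},\subgpC{\mathfrak g},\gpC{\mathfrak h}$ are their complexifications, set $x_0:=e\gpR H\in\gpR X$, and assume the three groups connected (genuinely needed: for $\gpR G=\mathbb Z/2$, $\gpR H=\{e\}=\subgpR G$ conditions (ii) and (iii) hold but (i) fails; in general the reduction to the connected case is routine). The only input requiring no reductivity is this: the map $\subgpR G/\subgpR H\to\gpR X$, $g'\subgpR H\mapsto g'x_0$, is well defined and injective, since $g_1'\gpR H=g_2'\gpR H$ forces $(g_2')^{-1}g_1'\in\gpR H\cap\subgpR G=\subgpR H$, and its image is exactly the orbit $\subgpR G\cdot x_0$. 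Hence (i) is equivalent to $\subgpR G\cdot x_0=\gpR X$, i.e.\ to $\subgpR G$ acting transitively, i.e.\ to $\gpR G=\subgpR G\,\gpR H$; in particular (i)$\Rightarrow$(ii) is immediate.

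I next record the infinitesimal reformulation of ``has an open orbit.'' For a point $x=g\gpR H\in\gpR X$, the orbit $\subgpR G\cdot x$ is an immersed submanifold whose tangent space at $x$, under the identification $T_x\gpR X\cong\mathfrak g/\operatorname{Ad}(g)\mathfrak h$, is the image of $\mathfrak g'$, namely $(\mathfrak g'+\operatorname{Ad}(g)\mathfrak h)/\operatorname{Ad}(g)\mathfrak h$; so $\subgpR G\cdot x$ is open in $\gpR X$ exactly when $\mathfrak g'+\operatorname{Ad}(g)\mathfrak h=\mathfrak g$. Taking $g=e$ shows that the orbit through $x_0$ is open iff $\mathfrak g'+\mathfrak h=\mathfrak g$; and since for real subspaces $\mathfrak a,\mathfrak b$ of a finite-dimensional real vector space $\mathfrak a+\mathfrak b=\mathfrak g\iff\mathfrak a_{\mathbb C}+\mathfrak b_{\mathbb C}=\mathfrak g_{\mathbb C}$ (both say $\dim_{\mathbb R}(\mathfrak a+\mathfrak b)=\dim_{\mathbb R}\mathfrak g$), this is precisely (iii). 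Thus (iii) holds iff the specific orbit $\subgpR G\cdot x_0$ is open, and in particular (iii)$\Rightarrow$(ii) is immediate too.

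It remains to prove (ii)$\Rightarrow$(i): that the existence of \emph{some} open $\subgpR G$-orbit forces $\subgpR G$ to act transitively. This is the substantial step, and the only place the reductivity of all three of $\gpR G,\subgpR G,\gpR H$ is essential---the implication is false in general, e.g.\ $\gpR G=SL(2,\mathbb C)$, $\subgpR G=SL(2,\mathbb R)$, $\gpR H$ a Borel subgroup, where $\subgpR G$ has the two open orbits $\mathbb H^{\pm}$ on $\gpR G/\gpR H=\mathbb P^1\mathbb C$ but is far from transitive. I would proceed in two moves. \emph{(a) Normalize the base point.} Replace $\gpR H$ by $g_1\gpR Hg_1^{-1}$, where $g_1\gpR H$ lies in the given open orbit; since $\gpR G/\gpR H\cong\gpR G/(g_1\gpR Hg_1^{-1})$ as $\gpR G$-spaces this changes neither the truth of (i) nor the hypothesis of (ii), and reductivity is preserved, so after relabeling we may assume the open orbit is $\subgpR G\cdot x_0$, whence $\mathfrak g'+\mathfrak h=\mathfrak g$ by the previous paragraph. \emph{(b) Show $\subgpR G\cdot x_0$ is closed}, which then finishes the proof because $\gpR X$ is connected. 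For (b) I would pass to a $\theta$-stable picture: with a Cartan decomposition $\mathfrak g=\mathfrak k\oplus\mathfrak p$ making $\mathfrak g'$ and $\mathfrak h$ both $\theta$-stable, the identity $\mathfrak g'+\mathfrak h=\mathfrak g$ splits as $\mathfrak k'+\mathfrak k_H=\mathfrak k$ and $\mathfrak p'+\mathfrak p_H=\mathfrak p$; the first gives $K'K_H=K$ (the multiplication $K'\times K_H\to K$ is a submersion, so $K'K_H$ is open, and it is compact hence closed, hence all of the connected $K$), and combining this with the $\mathfrak p$-part and the Mostow decomposition of $\gpR G/\gpR H$ as a $K_H$-equivariant Euclidean bundle over $K/K_H$ yields $\subgpR G\cdot x_0=\gpR X$. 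The genuinely delicate point---the step I expect to be the real obstacle---is this last normalization: once the base point has been pushed into the open orbit the freedom to conjugate $\gpR H$ is spent, so a Cartan involution stabilizing \emph{both} $\mathfrak g'$ and $\mathfrak h$ has to be produced directly from $\mathfrak g'+\mathfrak h=\mathfrak g$ (equivalently, the closedness of the open orbit must be obtained by some intrinsic argument, e.g.\ of a properness or invariant-measure flavour), rather than by conjugating the two subgroups independently.
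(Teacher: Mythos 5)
Your handling of the easy implications is correct: (i) is equivalent to transitivity, i.e.\ to $G=G'H$; the orbit through the base point $x_0=eH$ is open precisely when $\mathfrak g'+\mathfrak h=\mathfrak g$, which is equivalent to the complexified condition (iii); hence (i)$\Rightarrow$(ii) and (iii)$\Rightarrow$(ii) are immediate, and replacing $H$ by a conjugate reduces (ii)$\Rightarrow$(i) to showing that $\mathfrak g'+\mathfrak h=\mathfrak g$ (all three algebras reductive) forces $G=G'H$. Your remark that some connectedness hypothesis (on $X$, say) is implicitly needed is also fair. But note that what remains after these reductions is not a technical leftover: it \emph{is} the lemma. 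The paper itself gives no proof --- it quotes the statement from Kobayashi, Invent.\ Math.\ 117 (1994), Lemma 5.1 --- and the nontrivial content there is exactly this local-to-global factorization statement for reductive groups (in substance, Onishchik's theorem on factorizations $G=G'H$), which your proposal does not establish.

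Concretely, step (b) has two gaps. First, the existence of a single Cartan involution $\theta$ of $\mathfrak g$ stabilizing \emph{both} $\mathfrak g'$ and $\mathfrak h$ is nowhere proved; as you yourself observe, it cannot be obtained by conjugating the two subgroups independently once the base point has been normalized, and no construction of such a $\theta$ from the hypothesis $\mathfrak g'+\mathfrak h=\mathfrak g$ is offered --- for a general pair of reductive subalgebras no common $\theta$ exists, so any argument must genuinely use the sum condition, and this is precisely where the difficulty of the theorem sits. Second, even granting such a $\theta$, the final sentence ``combining $K=K'K_H$ with the $\mathfrak p$-part and the Mostow fibration yields $G'\cdot x_0=X$'' is an assertion, not a proof: after absorbing $K$ one must show that the Mostow slice $\exp(\mathfrak q)H$ (with $\mathfrak q$ a $K_H$-stable complement of $\mathfrak p_H$ in $\mathfrak p$) lies in $G'H$, and the additive identity $\mathfrak p'+\mathfrak p_H=\mathfrak p$ gives no such multiplicative conclusion, since $\exp$ is not a homomorphism; passing from the infinitesimal factorization to the global one is exactly the point of Onishchik's theorem and requires a real argument (compact-form reduction, properness/measure considerations, or the like). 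So the proposal proves only the trivial implications and explicitly leaves the essential implication (ii)$\Rightarrow$(i) open; to complete it you would either have to supply the simultaneous-$\theta$ construction together with a genuine proof of the slice statement, or argue as in the cited source via the factorization theory of reductive Lie groups.
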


The implication (ii) $\Rightarrow$ (i) does not hold
 in general 
 if we drop the assumption
 that $\gpR H$ is reductive.

We observe in the condition (iii) of Lemma \ref{lem:1.1}:
\begin{enumerate}
\item[$\bullet$]
the role of $\subgpR G$ and $\gpR H$ is symmetric;
\item[$\bullet$]
the condition is determined
 only by the complexification of the Lie algebras.  
\end{enumerate}
Hence, 
 one example in the compact case yields
 a number of examples, 
 as is illustrated by the following.  

\begin{example}
{\rm{
The unitary group $U(n)$ acts transitively
 on the unit sphere $S^{2n-1}$ in ${\mathbb{C}}^n \simeq {\mathbb{R}}^{2n}$, 
 and thus the inclusion $U(n) \hookrightarrow S O(2n)$
 induces the isomorphism:
\[
   U(n)/U(n-1) \overset \sim \to
   S O(2n)/S O(2n-1)
   \simeq
   S^{2n-1}.  
\]
{}From the implication (i) $\Rightarrow$ (iii)
 in Lemma \ref{lem:1.1}, 
 we have
\[
  {\mathfrak{g l}}(n,{\mathbb{C}})
  +
  {\mathfrak{s o}}(2n-1,{\mathbb{C}})
  =
  {\mathfrak{s o}}(2n,{\mathbb{C}}).  
\]
In turn, 
taking other real forms
 or switching $\subgpR G$ and $\gpR H$,
 we get the following isomorphisms:
\begin{align*}
   S O(2n-1)/U(n-1) \overset \sim \to \,& S O(2n)/U(n),
\\
   G L(n,{\mathbb{C}})/G L(n-1,{\mathbb{C}}) \overset \sim \to \,& S O(2n,{\mathbb{C}})/S O(2n-1,{\mathbb{C}}), 
\\
S O(2n-1,{\mathbb{C}})/G L(n-1,{\mathbb{C}}) \overset \sim \to \,& S O(2n,{\mathbb{C}})/G L(n,{\mathbb{C}}),
\\
   U(p,q)/U(p-1,q) \overset \sim \to \,& S O(2p,2q)/S O(2p-1,2q), 
\\
   S O(2p-1,2q)/U(p,q) \overset \sim \to \,& S O(2p,2q)/U(p,q),
\\
   G L(n,{\mathbb{R}})/G L(n-1,{\mathbb{R}}) \overset \sim \to \,& S O(n,n)/S O(n-1,n),
\\
   S O(n-1,n)/G L(n-1,{\mathbb{R}}) \overset \sim \to \,& S O(n,n)/G L(n,{\mathbb{R}}).  
\\
\end{align*}
See {\cite[Sect. 5]{xkInvent94}}
 for further examples.  
See also Table \ref{table:GHL} in Section \ref{sec:example}.  
}}
\end{example}

We shall use
 the following notation and setting throughout this article.  
\begin{setting}
[reductive hidden symmetry]
\label{set:realFXY}
Suppose a triple
 $(\gpR G, \subgpR G, \gpR H)$ satisfies
 one of (therefore any of) the equivalent three conditions
 in Lemma \ref{lem:1.1}.  
We set $\subgpR H := \subgpR G \cap \gpR H$
 and $\gpR X := \subgpR G/\subgpR H$.  
Then $\gpR X$ has a hidden symmetry $\gpR G$
 and we have a natural diffeomorphism
 $\gpR X \simeq \gpR G/\gpR H$
 which respects the action of $\subgpR G$.  
\end{setting}

Similarly,
 we may consider settings
 for complex Lie groups
 or compact Lie groups:
\begin{setting}
[complex reductive hidden symmetry]
\label{set:cpxFXY}
Let $\subgpC G \subset \gpC G \supset \gpC H$
 be a triple
 of complex reductive groups. 
Suppose that their Lie algebras satisfy
 the condition {\rm{(iii)}} in Lemma \ref{lem:1.1},
 or equivalently,
 $\subgpC G$ acts transitively
 on $\gpC G/\gpC H$.  
We set $\subgpC H:= \subgpC G \cap \gpC H$
 and $\gpC X := \subgpC G/\subgpC H \simeq \gpC G/\gpC H$.  
\end{setting}
\begin{setting}
[compact hidden symmetry]
\label{set:cptFXY}
Let $\subgpcpt G \subset \gpcpt G \supset \gpcpt H$
 be a triple compact Lie groups.  
Suppose that their complexified Lie algebras satisfy 
 the condition {\rm{(iii)}} in Lemma \ref{lem:1.1}, 
 or equivalently, 
 $\subgpcpt G$ acts transitively on $\gpcpt G/\gpcpt H$.  
We set 
 $\subgpcpt H := \subgpcpt G \cap \gpcpt H$
 and $\gpcpt X:= \subgpcpt G / \subgpcpt H \simeq \gpcpt G/\gpcpt H$.  
\end{setting}

By \lq\lq{global analysis with hidden symmetries}\rq\rq\
 we mean analysis of functions
 and differential equations on $\gpR X \simeq \subgpR G/\subgpR H
 \simeq \gpR G/\gpR H$
 by using representation theory 
 of two groups $\gpR G$ and $\subgpR G$.  
To perform it, 
 our key idea 
 is based on the following observation.  
\begin{observation}
\label{obs:1.9}
In Setting \ref{set:realFXY}, 
$\subgpR H$ is not necessarily a maximal reductive subgroup 
 of $\subgpR G$ 
 even when $\gpR H$ is maximal in $\gpR G$.  
\end{observation}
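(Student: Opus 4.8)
The plan is to establish this existential statement by exhibiting one explicit triple in Setting \ref{set:realFXY} for which $\gpR H$ is maximal in $\gpR G$ while $\subgpR H$ is far from being a maximal reductive subgroup of $\subgpR G$. Take $n\ge 2$ and
\[
 (\gpR G,\subgpR G,\gpR H)=\bigl(SO(2n),\,U(n),\,SO(2n-1)\bigr),
\]
with $U(n)$ embedded in $SO(2n)$ through ${\mathbb{C}}^{n}\simeq{\mathbb{R}}^{2n}$ and $SO(2n-1)$ the isotropy subgroup of a unit vector. Since $U(n)$ acts transitively on $S^{2n-1}$, the natural map $U(n)/U(n-1)\to SO(2n)/SO(2n-1)$ is bijective, i.e. condition (i) of Lemma \ref{lem:1.1} holds (this is the first isomorphism displayed in the Example above), so the triple belongs to Setting \ref{set:realFXY} and
\[
 \subgpR H=U(n)\cap SO(2n-1)=U(n-1),\qquad
 \gpR X=S^{2n-1}\simeq U(n)/U(n-1)\simeq SO(2n)/SO(2n-1).
\]

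Next I would verify the two halves of the contrast. On the $\gpR G$ side, $\gpR H=SO(2n-1)$ is a maximal connected subgroup of $SO(2n)$ (hence a maximal connected reductive subgroup): this is immediate from the decomposition $\mathfrak{so}(2n,{\mathbb{C}})=\mathfrak{so}(2n-1,{\mathbb{C}})\oplus{\mathbb{C}}^{2n-1}$ as $\mathfrak{so}(2n-1,{\mathbb{C}})$-modules, since for $n\ge 2$ the standard module ${\mathbb{C}}^{2n-1}$ is irreducible, so any subalgebra lying between $\mathfrak{so}(2n-1,{\mathbb{C}})$ and $\mathfrak{so}(2n,{\mathbb{C}})$ coincides with one of them. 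On the $\subgpR G$ side, $\subgpR H=U(n-1)$ is plainly not maximal in $U(n)$ for $n\ge 2$: letting $\subgpR L:=U(1)\times U(n-1)$ be the stabilizer in $U(n)$ of the complex line on which $\subgpR H$ acts trivially (the $U(1)$-factor scaling that line), one has the proper inclusions $U(n-1)\subsetneq U(1)\times U(n-1)\subsetneq U(n)$ between reductive groups. This already proves the observation.

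I do not expect a genuine obstacle; the only point that deserves care is the reading of the word \emph{maximal}. In the example above $\gpR H$ is maximal among \emph{connected} reductive subgroups, which suffices here, since $\subgpR H$ is not maximal even among connected reductive subgroups of $\subgpR G$. If one wants $\gpR H$ maximal among all reductive subgroups, one may instead take $(\gpR G,\subgpR G,\gpR H)=\bigl(SU(4),\,Sp(2),\,S(U(1)\times U(3))\bigr)$: here $Sp(2)$ is already transitive on $S^7$, hence on ${\mathbb{P}}^{3}{\mathbb{C}}=SU(4)/S(U(1)\times U(3))$, so Lemma \ref{lem:1.1} applies; $\gpR H$ is self-normalizing in $SU(4)$ and the only reductive subalgebras of $\mathfrak{su}(4)$ containing $\mathfrak{s}(\mathfrak{u}(1)\oplus\mathfrak{u}(3))$ are it and $\mathfrak{su}(4)$, so $\gpR H$ is a maximal reductive subgroup, whereas $\subgpR H=Sp(2)\cap S(U(1)\times U(3))\simeq U(1)\times Sp(1)$ sits strictly inside $Sp(1)\times Sp(1)\subsetneq Sp(2)$. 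It is worth noting that the first example already previews the constructions of this section: the intermediate group $\subgpR L=U(1)\times U(n-1)$ induces the $\subgpR G$-equivariant fibration $\gpR F\to\gpR X\to\gpR Y$ with fiber $\gpR F=\subgpR L/\subgpR H\simeq S^{1}$ over the base $\gpR Y=\subgpR G/\subgpR L\simeq{\mathbb{P}}^{n-1}{\mathbb{C}}$ --- the Hopf fibration $S^{1}\to S^{2n-1}\to{\mathbb{P}}^{n-1}{\mathbb{C}}$.
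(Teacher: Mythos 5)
Your proposal is correct, and it is essentially the paper's own (implicit) justification: the observation is left without a formal proof there, but its intended witness is exactly the sphere example of Example 2.2 / Table \ref{table:GHLU} row (i), where $\gpR H=SO(2n-1)$ is maximal in $SO(2n)$ while $\subgpR H=U(n-1)\subsetneq U(n-1)\cdot U(1)\subsetneq U(n)$, realizing the Hopf fibration $S^1\to S^{2n-1}\to{\mathbb{P}}^{n-1}{\mathbb{C}}$ used later in Section \ref{subsec:Hopf}. Your extra care about ``maximal'' versus ``maximal connected,'' with the $(SU(4),Sp(2),S(U(1)\times U(3)))$ variant (Table row (iii) with $n=1$), is a reasonable refinement but not needed for the paper's purposes.
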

We take a reductive subgroup $\subgpR L$ of $\subgpR G$
 which contains $\subgpR H$.  
(Later,
 we shall take $\subgpR L$ to be a maximal reductive subgroup.)
We set 
\[
  \gpR F := \subgpR L/\subgpR H
\quad
\text{and}
\quad
  \gpR Y := \subgpR G/\subgpR L.  
\]
Analogous notations will be applied 
 to Settings \ref{set:cpxFXY} and \ref{set:cptFXY}.  
Then we have the following fiber bundle structure
 of $\gpR X$ (also of $\gpC X$ and $\gpcpt X$):
\begin{alignat}{5}
& \gpR F \,\, &&\to \,\, &&\gpR X \,\,&&\to \,\, &&\gpR Y
\label{eqn:FXYR}
\\
& \,\,\cap && && \,\,\cap && && \,\cap
\notag
\\ 
& \gpC F &&\to &&\gpC X &&\to &&\gpC Y
\notag
\\
& \,\,\cup && && \,\,\cup && && \,\cup
\notag
\\
& \gpcpt F &&\to &&\gpcpt X &&\to &&\gpcpt Y.  
\notag
\end{alignat}
\subsection{Preliminaries from representation theory}
\label{subsec:realsp}

Let $\dual {\gpR G}$ be the unitary dual
 of $\gpR G$, 
{\it{i.e.}}, 
 the set of equivalence classes
 of irreducible unitary representations
 of a Lie group $\gpR G$.  
Any unitary representation
 $\Pi$  of $\gpR G$ is decomposed into a direct integral
 of irreducible unitary representations:
\begin{equation}
\label{eqn:decpi}
  \Pi
  \simeq
  \int_{\dual{\gpR G}}^{\oplus}
  m(\pi) \pi
  d \mu(\pi), 
\end{equation}
 where $d \mu$ is a Borel measure
 of $\dual {\gpR G}$
 endowed with the Fell topology, 
 and the measurable function
 $m: \dual {\gpR G} \to {\mathbb{N}} \cup \{\infty\}$
 stands
 for the multiplicity.  
The decomposition \eqref{eqn:decpi} is unique
 up to the equivalence of the measure
 if $\gpR G$ is a real reductive Lie group.

Let ${\mathcal{H}}$ be the Hilbert space
 on which the unitary representation $\Pi$ is realized.  
The {\it{discrete part}} $\Pi_d$ of $\Pi$
 is a subrepresentation defined on the maximal closed $\gpR G$-invariant subspace ${\mathcal{H}}_d$
 of ${\mathcal{H}}$
 that decomposes discretely into irreducible unitary representations.  
For an irreducible unitary representation $\pi$ of $\gpR G$, 
 we define the $\pi$-{\it{isotypic component}}
 of the unitary representation $\Pi$ by
\begin{equation}
\label{eqn:picomp}
  \Pi[\pi]:= \Hom_{\gpR G}
   (\pi, {\mathcal{H}})
   \otimes
   \pi, 
\end{equation}
where $\Hom_{\gpR G}(\, ,  \,)$ stands
 for the space of continuous $\gpR G$-homomorphisms.  
This is a unitary representation realized in a closed subspace of ${\mathcal{H}}_d$, 
 and is a multiple of $\pi$.  
Then we have a unitary equivalence
\[
   \Pi_d
   \simeq
   {\sum_{\pi \in \dual{\gpR G}}}^{\oplus} 
   \Pi_d[\pi], 
\]
where $\sum^{\oplus}$ denotes the Hilbert completion
 of the algebraic direct sum.  
Let $\Pi_c$ be the $\gpR G$-submodule 
 (\lq\lq{continuous part}\rq\rq) defined on the orthogonal complementary subspace ${\mathcal{H}}_c$ of ${\mathcal{H}}_d$
 in ${\mathcal{H}}$.

Given $\pi \in \dual {\gpR G}$
 and a subgroup $\subgpR G$
 of $\gpR G$, 
 we may think of $\pi$ 
 as a representation of the subgroup $\subgpR G$
 by restriction, 
 to be denoted by $\pi|_{\subgpR G}$.  
The irreducible decomposition
 of the restriction $\pi|_{\subgpR G}$ is called
 the {\it{branching law}}.  
We define a subset of $\dual {\subgpR G}$
 by 
\[
  \operatorname{Disc}(\pi|_{\subgpR G})
:=
  \{\vartheta \in \widehat{\subgpR G}:
    \operatorname{Hom}_{\subgpR G}(\vartheta, \pi|_{\subgpR G}) \ne \{0\} \}.  
\]
Such $\vartheta$ contributes
 to the discrete part
 $(\pi|_{\subgpR G})_d$ of the restriction $\pi|_{\subgpR G}$.

For a closed unimodular subgroup $\gpR H$, 
 we endow $\gpR G/\gpR H$
 with a $\gpR G$-invariant Radon measure
 and consider the unitary representation of $\gpR G$
 on the Hilbert space $L^2(\gpR G/\gpR H)$.  
The irreducible decomposition
 of $L^2(\gpR G/\gpR H)$ is called 
 the {\it{Plancherel formula}}.  
We define a subset of $\widehat{\gpR G}$
 by 
\[
  \operatorname{Disc}(\gpR G/\gpR H)
:=
  \{\pi \in \widehat{\gpR G}:
    \operatorname{Hom}_{\gpR G}(\pi,L^2(\gpR G/\gpR H)) \ne \{0\} \}.  
\]
Such $\pi$ is called a {\it{discrete series representation}}
 for $\gpR G/\gpR H$.  
For $\gpR H=\{e\}$, 
 $\Disc (\gpR G/\gpR H)$ consists
 of Harish-Chandra's discrete series representations.  
If $\gpR H$ is noncompact,
 elements of $\Disc (\gpR G/\gpR H)$ are not necessarily
 tempered representations
 of $\gpR G$.

These sets $\operatorname{Disc}(\pi|_{\subgpR G})$
 and $\operatorname{Disc}(\gpR G/\gpR H)$
 may be empty.  
We shall denote by $\underline{\operatorname{Disc}}(\pi|_{\subgpR G})$
 and $\underline{\operatorname{Disc}}(\gpR G/\gpR H)$
 the multisets counted with multiplicities.  
Then the discrete part
 of the unitary representations $\pi|_{\subgpR G}$ of $\subgpR G$
 and $L^2(\gpR G/\gpR H)$ of $\gpR G$ are given as
\begin{align*}
(\pi|_{\subgpR G})_d
\simeq 
&
{\sum}^{\oplus}_{\vartheta \in \underline{\operatorname{Disc}}(\pi|_{\subgpR G})}
\vartheta, 
\\
L^2(\gpR G/\gpR H)_d
\simeq
&
{\sum}^{\oplus}_{\pi \in \underline{\operatorname{Disc}}(\gpR G/\gpR H)} \pi, 
\end{align*}
respectively.

Given a unitary representation $(\tau, W)$ of $\gpR H$, 
 we form a $\gpR G$-equivariant Hilbert vector bundle
 ${\mathcal{W}}:= \gpR G \times_{\gpR H} W$
 over $\gpR G/\gpR H$.  
Then we have a natural unitary representation 
 $\operatorname{Ind}_{\gpR H}^{\gpR G} \tau$
 on the Hilbert space $L^2(\gpR G/\gpR H, {\mathcal{W}})$
 of $L^2$-sections, 
and define a subset $\Disc (\gpR G/\gpR H, \tau)$
 of $\dual {\gpR G}$
 and a multiset $\underline{\Disc} (\gpR G/\gpR H, \tau)$, 
 similarly.  
They are reduced to $\Disc (\gpR G/\gpR H)$
 and $\underline{\Disc}(\gpR G/\gpR H)$, 
respectively,
 if $(\tau, W)$ is the trivial one-dimensional 
 representation of $\gpR H$.

\subsection{Double fibration for $\underline{\Disc}(\subgpR G/\subgpR H)$}
\label{subsec:double}

Suppose we are in Setting \ref{set:realFXY}, 
 namely, 
 we have a bijection
\[
\gpR X= \subgpR G/\subgpR H
\overset \sim \to
\gpR G/\gpR H
\]
induced by the inclusion
 $\subgpR G \hookrightarrow \gpR G$.  
Then we may compare the three objects
 (A), (B), and (C)
 in Introduction.  
We wish to obtain 
new information of the one from the other two.  
A general framework
 that provides a relationship between (A), (B), and (C) will be formulated
 by using the notion of real spherical homogeneous spaces,
 which we recall from \cite{Ksuron}.

\begin{definition}
\label{def:realsp}
{\rm{
A homogeneous space $\gpR X$ 
 of a real reductive Lie group $\gpR G$
 is said to be 
 {\it{real spherical}}
 if $\gpR X$ admits an open orbit
 of a minimal parabolic subgroup of $\gpR G$.  
}}
\end{definition}

\begin{example}
\label{ex:1.4}
{\rm{
\begin{enumerate}
\item[{\rm{(1)}}]
Any homogeneous space of a compact Lie group $\gpcpt G$
 is real spherical
 because a minimal parabolic subgroup of $\gpcpt G$
 is the whole group $\gpcpt G$.  
\item[{\rm{(2)}}]
Any reductive symmetric space
 is real spherical.  
\item[{\rm{(3)}}]
Any real form $\gpR X=\gpR G/\gpR H$
 of a $\gpC G$-spherical homogeneous space
 $\gpC X=\gpC G/\gpC H$
 (see Definition-Theorem \ref{defthm:sp} in Section \ref{sec:2})
 is real spherical \cite[Lemma 4.2]{xktoshima}.  
\item[{\rm{(4)}}]
Let $\gpR N$ be a maximal unipotent subgroup of $\gpR G$.  
Then $\gpR G/\gpR N$ is real spherical,
 as is seen from the Bruhat decomposition.  
\end{enumerate}
}}
\end{example}

The notion of \lq\lq{real sphericity}\rq\rq\
 gives a geometric criterion
 for $\gpR X$
 on which the function space is 
 under control 
 by representations of $\gpR G$
 in the following sense.  
Let $\dual {G}_{\operatorname{smooth}}$
 be the set of equivalence classes
 of irreducible admissible representations
 of $G$ of moderate growth \cite[Ch.~11. Sect.~5.1]{WaI}.  
\begin{fact}
[{\cite{xktoshima}}]
\label{fact:1.8}
Let $\gpR X$ be an algebraic homogeneous space
 of a real reductive Lie group $\gpR G$.  
Then the following two conditions
 on $\gpR X$ are equivalent:
\begin{enumerate}
\item[{\rm{(i)}}]
$\invHom {\gpR G}{\pi}{C^{\infty}(\gpR X, {\mathcal{W}})}$
 is finite-dimensional 
 for any $\pi \in \dual{G}_{\operatorname{smooth}}$
 and for any $\gpR G$-equivariant vector bundle
 ${\mathcal{W}} \to \gpR X$
 of finite rank.  
\item[{\rm{(ii)}}]
$\gpR X$ is real spherical.  
\end{enumerate}
\end{fact}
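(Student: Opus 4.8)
The plan is to deduce this equivalence from the general finiteness theorem for real spherical spaces together with a converse argument using spherical principal series. First I would recall the precise statement that makes (ii) $\Rightarrow$ (i) work: if $\gpR X=\gpR G/\gpR H$ is real spherical, then by the main result of \cite{xktoshima} (building on the Kobayashi--Oshima framework) one has the uniform bound
\[
\dim_{\mathbb C}\invHom{\gpR G}{\pi}{C^{\infty}(\gpR X,{\mathcal W})}<\infty
\]
for every irreducible admissible smooth-moderate-growth representation $\pi$ and every finite-rank $\gpR G$-equivariant bundle ${\mathcal W}\to\gpR X$. The key geometric input behind that theorem is that real sphericity of $\gpR X$ is equivalent to $\gpR H$ having finitely many orbits on the real flag variety $\gpR G/\gpR P$ (with $\gpR P$ a minimal parabolic), which in turn controls the dimension of $(\gpR G/\gpR H\times \gpR G/\gpR P)$-distribution spaces via a stratification-and-holonomic argument. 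I would simply cite this; no new work is needed for this direction.

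The substantive direction is (i) $\Rightarrow$ (ii), or rather its contrapositive: if $\gpR X$ is \emph{not} real spherical, exhibit a $\pi$ and a bundle ${\mathcal W}$ for which $\invHom{\gpR G}{\pi}{C^{\infty}(\gpR X,{\mathcal W})}$ is infinite-dimensional. The natural candidate for $\pi$ is a (degenerate) principal series representation $\pi_{\lambda}=\mathrm{Ind}_{\gpR P}^{\gpR G}(\mathbb C_{\lambda})$ induced from a character of a minimal parabolic $\gpR P=\gpR M\gpR A\gpR N$. By Frobenius reciprocity in the smooth category,
\[
\invHom{\gpR G}{\pi_{\lambda}}{C^{\infty}(\gpR X)}
\cong
\bigl(\text{$\gpR P$-equivariant distributions / sections on }\gpR X\bigr),
\]
and such spaces are governed by the $\gpR P$-orbit structure on $\gpR X$, equivalently the $\gpR H$-orbit structure on $\gpR G/\gpR P$. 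When $\gpR X$ is not real spherical there are infinitely many $\gpR H$-orbits on $\gpR G/\gpR P$; one then chooses $\lambda$ generic so that each orbit supports a nonzero equivariant distribution (an invariant-measure / semi-invariant-section computation on the relevant orbit, using that the modular-character obstruction can be absorbed by tensoring with a suitable finite-rank bundle ${\mathcal W}$), and the resulting distributions are linearly independent because they have distinct supports. Letting the order of the distribution or the orbit vary produces infinitely many independent elements, so the $\Hom$ space is infinite-dimensional.

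The main obstacle will be the last direction done honestly: turning "infinitely many $\gpR P$-orbits" into "infinitely many linearly independent invariant functionals" requires care about (a) the parameter $\lambda$ — for special $\lambda$ an orbit may fail to carry a semi-invariant distribution, so one must argue that generic $\lambda$ works, and (b) matching the modular characters of the stabilizers, which is exactly why the statement is phrased with an arbitrary finite-rank bundle ${\mathcal W}$ rather than the trivial line bundle. I expect the cleanest route is not to reprove this from scratch but to invoke the equivalence as established in \cite{xktoshima} (where both implications are proved in the algebraic-homogeneous-space setting, the non-spherical case being handled precisely by this orbit-counting / principal-series argument), so the "proof" here is really a pointer: (ii) $\Rightarrow$ (i) is the finiteness theorem, and (i) $\Rightarrow$ (ii) follows from its sharpness as shown there. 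I would state it that way and refer the reader to \cite{xktoshima} for the details of the converse.
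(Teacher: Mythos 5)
The paper itself gives no proof of Fact~\ref{fact:1.8}: it is quoted verbatim from \cite{xktoshima}, exactly as your proposal ultimately does, and your sketch of that paper's mechanism (the finiteness theorem for real spherical $\gpR X$, and the converse via infinitely many linearly independent $\gpR H$- or $\gpR P$-equivariant distributions attached to the orbit structure when there is no open $\gpR P$-orbit) is a fair summary of the cited argument. So your treatment matches the paper's: both directions are correctly delegated to \cite{xktoshima}, and no further verification is required here.
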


The condition (i) in Fact \ref{fact:1.8} remains
 the same 
 if we replace $C^{\infty}$ by ${\mathcal{D}}'$
 (distribution)
 or if we replace $\invHom {\gpR G}{\pi}{C^{\infty}(\gpR X, {\mathcal{W}})}$
 by $\invHom{{\mathfrak g}, K}{\pi_K}{C^{\infty}(\gpR X, {\mathcal{W}})}$, 
 where $\pi_K$ stands for the underlying
 $({\mathfrak{g}}, K)$-module
 of $\pi$.

Highlighting the \lq\lq{discrete part}\rq\rq\
 of the unitary representations 
 that are involved,
 we obtain a basic theorem
 on analysis with hidden symmetries
 that relates (A), (B), and (C):
\begin{theorem}
\label{thm:20160506}
Assume that $\gpR X = \subgpR G/\subgpR H$ is real spherical
 in Setting \ref{set:realFXY}.  
\begin{enumerate}
\item[{\rm{(1)}}]
The multiplicity
 of any element in $\underline{\Disc} (\subgpR G/\subgpR H)$
 is finite.

\item[{\rm{(2)}}]
Let $\subgpR L$ be a subgroup of $\subgpR G$ 
 which contains $\subgpR H$
  (see Observation \ref{obs:1.9}).    
Then, 
 the discrete part
 of the unitary representation $L^2(\subgpR G/\subgpR H)$
 has the following two expressions:
\begin{alignat*}{4}
L^2(\subgpR G/\subgpR H)_d
&\simeq
&&
{\sum}^{\oplus}_{\pi \in \underline{\Disc}(\gpR G/\gpR H)}
  (\pi|_{\subgpR G})_d
&&=&& {\sum}^{\oplus}_{\pi \in \underline{\Disc}(\gpR G/\gpR H)}
  {\sum}^{\oplus}_{\vartheta \in \underline{\Disc}(\pi|_{\subgpR G})}
  \vartheta
\\
&\simeq
&&
{\sum}^{\oplus}_{\tau \in \underline{\Disc}(\subgpR L/\subgpR H)}
  L^2(\subgpR G/\subgpR H,\tau)_d
&&=&& {\sum}^{\oplus}_{\tau \in \underline{\Disc}(\subgpR L/\subgpR H)}
  {\sum}^{\oplus}_{\vartheta \in \underline{\Disc}(\subgpR G/\subgpR L,\tau)}
  \vartheta.  
\end{alignat*}

\item[{\rm{(3)}}]
Assume further that $\underline{\Disc} (\subgpR G/\subgpR H)$
 is multiplicity-free, 
 {\it{i.e.}}
$\dim \invHom{\subgpR G}{\vartheta}{L^2(\gpR X)} \le 1$
 for any $\vartheta \in \dual{\subgpR G}$.  
Then, 
there is a natural double fibration
\begin{alignat*}{4}
&                        &&{\Disc} (\subgpR G/\subgpR H)&& &&
\\
& {\mathcal{K}}_1 \swarrow &&                                     &&\searrow {\mathcal{K}}_2&&
\\
{\Disc}(\gpR G/\gpR H)
&
&&
&&
&&{\Disc}(\subgpR L/\subgpR H)
\end{alignat*}
such that the fibers
 are given by
\begin{alignat*}{2}
{\mathcal{K}}_1^{-1}(\pi)
=\,&
{\Disc}(\pi|_{\subgpR G})
\quad
&&\text{for } \pi \in {\Disc}(\gpR G/\gpR H), 
\\
{\mathcal{K}}_2^{-1}(\tau)
=\,&
{\Disc}({\subgpR G}/{\subgpR L}, \tau)
\quad
&&\text{for } \tau \in {\Disc}(\subgpR L/\subgpR H).  
\end{alignat*}

\end{enumerate}
\end{theorem}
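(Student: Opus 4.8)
The plan is to build everything on the single geometric input that $\gpR X = \subgpR G/\subgpR H \simeq \gpR G/\gpR H$ as $\subgpR G$-spaces, so that $L^2(\gpR X)$ carries commuting actions of $\gpR G$ and $\subgpR G$ (the latter being the restriction of the former), and then to read off the three assertions from standard facts about direct integral decompositions together with Fact \ref{fact:1.8}.

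\medskip
\noindent\textbf{Part (1).}
First I would invoke Fact \ref{fact:1.8}: since $\gpR X$ is real spherical, $\invHom{\subgpR G}{\vartheta}{C^{\infty}(\gpR X)}$ is finite-dimensional for every $\vartheta \in \dual{\subgpR G}_{\operatorname{smooth}}$. A discrete summand $\vartheta$ of $L^2(\gpR X)$ is in particular an admissible representation whose smooth vectors embed $\subgpR G$-equivariantly into $C^{\infty}(\gpR X)$, and the multiplicity of $\vartheta$ in $L^2(\gpR X)_d$ is bounded by $\dim \invHom{\subgpR G}{\vartheta^{\infty}}{C^{\infty}(\gpR X)}$ (one passes from the unitary Hilbert-space Hom to the smooth-vector Hom by a density/continuity argument, as in the remark following Fact \ref{fact:1.8} on replacing $\invHom{\gpR G}{\pi}{C^\infty}$ by the $(\mathfrak g,K)$-version). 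Hence the multiplicity is finite.

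\medskip
\noindent\textbf{Part (2).}
The two expressions come from decomposing $L^2(\gpR X)$ in stages in two different ways and extracting the discrete part each time. For the first: decompose $L^2(\gpR G/\gpR H)$ under $\gpR G$; the discrete part is $\sum^{\oplus}_{\pi \in \underline{\Disc}(\gpR G/\gpR H)} \pi$ by definition, and restricting to $\subgpR G$ and taking the discrete part of each $\pi|_{\subgpR G}$ yields the right-hand side. The point that needs care is that the discrete part of $L^2(\gpR X)$ as a $\subgpR G$-module \emph{equals} $\bigl(L^2(\gpR X)_d\bigr)_d$ restricted, i.e.\ that no new discrete spectrum for $\subgpR G$ hides inside the continuous part $\Pi_c$ of the $\gpR G$-decomposition; here one uses that $\gpR G \supset \subgpR G$ is a reductive inclusion so that restriction of a direct integral over $\dual{\gpR G}$ is again a direct integral with no discrete contribution from a.e.\ fiber — or, more robustly, one simply observes both sides are the discrete part of the \emph{same} unitary representation of $\subgpR G$ and match term by term using uniqueness of the discrete part. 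For the second expression, realize $L^2(\gpR X)$ via the fibration $\gpR F \to \gpR X \to \gpR Y$ with $\gpR F = \subgpR L/\subgpR H$, $\gpR Y = \subgpR G/\subgpR L$: decomposing $L^2(\gpR F)$ under $\subgpR L$ gives $\sum^{\oplus}_{\tau \in \underline{\Disc}(\subgpR L/\subgpR H)} \tau$ on the discrete side, inducing up to $\subgpR G$ realizes the corresponding piece of $L^2(\gpR X)$ as $\sum^{\oplus}_{\tau} L^2(\subgpR G/\subgpR H, \tau)$ via $L^2(\subgpR G/\subgpR L,\tau)$, and extracting discrete parts gives the claimed sum over $\underline{\Disc}(\subgpR G/\subgpR L, \tau)$. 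The identity $L^2(\gpR X) \simeq \int^{\oplus}_{\dual{\subgpR L}} L^2(\subgpR G/\subgpR L, \tau)\, d\tau$ (Mackey-style, using the fiber-bundle structure and unimodularity) is the technical workhorse here.

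\medskip
\noindent\textbf{Part (3) and the main obstacle.}
Under the multiplicity-free hypothesis, every $\vartheta \in \Disc(\subgpR G/\subgpR H)$ occurs with multiplicity exactly one, so by Part (2) it occurs in $(\pi|_{\subgpR G})_d$ for exactly one $\pi \in \Disc(\gpR G/\gpR H)$ and in $\Disc(\subgpR G/\subgpR L, \tau)$ for exactly one $\tau \in \Disc(\subgpR L/\subgpR H)$; these assignments define the maps ${\mathcal{K}}_1$ and ${\mathcal{K}}_2$, and the fiber descriptions are then immediate restatements of Part (2). I expect the main obstacle to be exactly the well-definedness and single-valuedness of ${\mathcal{K}}_1$ and ${\mathcal{K}}_2$ — one must rule out that a given $\vartheta$ appears in the discrete restriction of two distinct $\pi$'s (or two distinct $\tau$'s), which could happen a priori if the \emph{global} multiplicity-one in $L^2(\gpR X)$ did not force \emph{termwise} multiplicity-one in the double sum of Part (2). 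The resolution is that the two sums in Part (2) are direct sum decompositions of one Hilbert space, so a $\vartheta$ appearing in summands indexed by $\pi_1 \neq \pi_2$ would give multiplicity $\geq 2$ in $L^2(\gpR X)$, contradicting the hypothesis; the same for $\tau$. Once this is nailed down, the double fibration diagram and its fiber formulas follow formally.
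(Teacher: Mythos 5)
Your Parts (1) and (3) are essentially the paper's own arguments and are fine as far as they go; the genuine gap is in Part (2), which you present as formal bookkeeping with direct integrals. The entire content of Part (2) is to rule out $\subgpR G$-discrete spectrum hiding inside the continuous parts, and neither of your justifications achieves this. On the ${\mathcal{K}}_1$ side, the principle you invoke --- that restricting a direct integral over $\dual{\gpR G}$ to $\subgpR G$ yields ``no discrete contribution from a.e.\ fiber'' --- is false in general: restricting a purely continuous $\gpR G$-decomposition to a subgroup routinely produces discrete $\subgpR G$-spectrum (restrict to a compact subgroup, or compare the degenerate principal series of $O(2p,2q)$ restricted to $U(p,q)$ as in Howe--Tan). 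Your ``more robust'' alternative, matching term by term by uniqueness of the discrete part, is circular: it presupposes that ${\sum}^{\oplus}_{\pi}(\pi|_{\subgpR G})_d$ \emph{is} the discrete part of $L^2(\gpR X)|_{\subgpR G}$, which is exactly the assertion to be proved. The paper does not derive this formally either; it quotes Fact \ref{fact:3.1} (\cite{xk:1}, Theorem 2.1), whose multiset bijection \eqref{eqn:DDD} itself requires the real sphericity hypothesis.

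On the ${\mathcal{K}}_2$ side you do not even flag the analogous issue: after the induction-by-stages decomposition $L^2(\gpR X)\simeq L^2(\subgpR G/\subgpR L, L^2(\gpR F)_d)\oplus L^2(\subgpR G/\subgpR L, L^2(\gpR F)_c)$, one must prove that the second summand carries no $\subgpR G$-discrete spectrum; ``extracting discrete parts'' silently assumes it. This is where the paper's real work lies (Proposition \ref{prop:1608119}): it shows $\Hom_{\subgpR G}(\vartheta, L^2(\subgpR G/\subgpR L, L^2(\gpR F)_c))$ is either zero or infinite-dimensional, by splitting the continuous support in $\dual{\subgpR L}$ into two essentially disjoint measurable sets each of which still sees $\vartheta$ (if no such splitting existed, $\vartheta$ would be forced to live over a set of measure zero, contradicting $(L^2(\gpR F)_c)_d=0$), and iterating; infinite multiplicity then contradicts Part (1). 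Thus real sphericity enters Part (2) essentially, not only Part (1) --- a point your ``standard direct-integral facts'' framing misses. Once Part (2) is secured, your treatment of Part (3), including the observation that a $\vartheta$ lying over two distinct $\pi$'s (or $\tau$'s) would force multiplicity at least two, coincides with the paper's argument via \eqref{eqn:MFtheta} and is correct.
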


\begin{remark}
{\rm{
\begin{enumerate}
\item[{\rm{(1)}}]
In the case where $\gpR F=\subgpR L/ \subgpR H$
 is compact,
 the idea of Theorem \ref{thm:20160506} was implicitly used in \cite{xk:1}
 to find the branching law 
 of some Zuckerman $A_{\mathfrak {q}}(\lambda)$-modules
 with respect to reductive symmetric pairs
 (see \cite{Vogan81, xvoza}
 for the definition of $A_{\mathfrak {q}}(\lambda)$).  
In the same spirit, 
 we shall give a new example of branching laws of $A_{\mathfrak {q}}(\lambda)$
 with respect to {\it{nonsymmetric}} pairs
 in Theorem \ref{thm:Spin}.  
\item[{\rm{(2)}}]
We shall see in Section \ref{sec:5}
 that Theorem \ref{thm:20160506} serves also as a new method
 for spectral analysis
 on non-Riemannian locally symmetric spaces
 in the setting
 where $\subgpR H$ is compact
 and $\gpR H$ is noncompact.  
\item[{\rm{(3)}}]
In \cite{KKinv}, 
 we shall give a proof of Theorems \ref{thm:A}
 and \ref{thm:B} below
 by using Theorem \ref{thm:20160506}
 in the special setting
 where $\gpR G$ is compact.  
\end{enumerate}
}}
\end{remark}

As a direct consequence of Theorem \ref{thm:20160506} (2), 
 we obtain the following:
\begin{corollary}
Suppose $\subgpR G/\subgpR H$ is real spherical in Setting \ref{set:realFXY}
 and let $\subgpR L$ be a reductive subgroup
 of $\subgpR G$ containing $\subgpR H$.  
Then the following three subsets
 of $\dual {\subgpR G}$ are the same: 
\[
  \Disc(\subgpR G/\subgpR H)
  =
  \bigcup_{\pi \in \Disc(\gpR G/\gpR H)}
  \Disc(\pi|_{\subgpR G})
  =
  \bigcup_{\tau \in \Disc(\subgpR L/\subgpR H)}
  \Disc(\subgpR G/\subgpR L, \tau).  
\]
\end{corollary}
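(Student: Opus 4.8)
The plan is to realize each of the three sets in the statement as the set of irreducible constituents of one and the same discrete unitary representation, the discrete part $L^2(\gpR X)_d=L^2(\subgpR G/\subgpR H)_d$, and to read it off from the two descriptions of this representation provided by Theorem~\ref{thm:20160506}(2). Unwinding the definitions in Section~\ref{subsec:realsp}, for $\vartheta\in\dual{\subgpR G}$ one has $\vartheta\in\Disc(\subgpR G/\subgpR H)$, respectively $\vartheta\in\Disc(\pi|_{\subgpR G})$, respectively $\vartheta\in\Disc(\subgpR G/\subgpR L,\tau)$, exactly when $\invHom{\subgpR G}{\vartheta}{L^2(\subgpR G/\subgpR H)}$, respectively $\invHom{\subgpR G}{\vartheta}{\pi|_{\subgpR G}}$, respectively $\invHom{\subgpR G}{\vartheta}{\operatorname{Ind}_{\subgpR L}^{\subgpR G}\tau}$, is nonzero; so it suffices to match these $\Hom$-conditions against one another.

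The only ingredient needed beyond Theorem~\ref{thm:20160506}(2) is the bookkeeping fact, already built into the discussion of \eqref{eqn:picomp} in Section~\ref{subsec:realsp}, that for any unitary representation $(\Pi,{\mathcal H})$ of $\subgpR G$ the $\vartheta$-isotypic component $\Pi[\vartheta]=\invHom{\subgpR G}{\vartheta}{{\mathcal H}}\otimes\vartheta$ is realized in a closed subspace of the discrete part ${\mathcal H}_d$; in particular $\invHom{\subgpR G}{\vartheta}{{\mathcal H}}\ne\{0\}$ if and only if $\vartheta$ occurs in $\Pi_d$. Granting this, whenever $\Pi_d$ is written as a Hilbert-completed direct sum ${\sum}^{\oplus}_{j}\vartheta_j$ of irreducibles, the set of $\vartheta\in\dual{\subgpR G}$ with $\invHom{\subgpR G}{\vartheta}{{\mathcal H}}\ne\{0\}$ is precisely $\{\vartheta_j\}$: a nonzero $\subgpR G$-homomorphism $\vartheta\to{\sum}^{\oplus}_j\vartheta_j$ must have some nonzero coordinate $\vartheta\to\vartheta_j$, which is an isomorphism by Schur's lemma. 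Since this conclusion sees only the underlying set of constituents, it lets me freely replace the multisets $\underline{\Disc}$ occurring in Theorem~\ref{thm:20160506}(2) by the sets $\Disc$ that index the unions in the statement.

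It then remains to apply these observations to $\Pi=L^2(\subgpR G/\subgpR H)$ and to the two unitary equivalences of Theorem~\ref{thm:20160506}(2). The right-hand side of the first equivalence is, after the displayed expansion in that theorem, a Hilbert direct sum of irreducibles $\vartheta$ indexed by $\pi\in\underline{\Disc}(\gpR G/\gpR H)$ and $\vartheta\in\underline{\Disc}(\pi|_{\subgpR G})$, so reading off constituents gives
\[
\Disc(\subgpR G/\subgpR H)=\bigcup_{\pi\in\Disc(\gpR G/\gpR H)}\Disc(\pi|_{\subgpR G}).
\]
The right-hand side of the second equivalence is, likewise, a Hilbert direct sum of irreducibles indexed by $\tau\in\underline{\Disc}(\subgpR L/\subgpR H)$ and $\vartheta\in\underline{\Disc}(\subgpR G/\subgpR L,\tau)$, so reading off constituents gives
\[
\Disc(\subgpR G/\subgpR H)=\bigcup_{\tau\in\Disc(\subgpR L/\subgpR H)}\Disc(\subgpR G/\subgpR L,\tau),
\]
and combining the two equalities proves the corollary. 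I do not anticipate any real obstacle: all the analytic substance (and the use of real sphericity of $\gpR X$, which enters through the finite-multiplicity statement of Theorem~\ref{thm:20160506}(1) that keeps the discrete sums well behaved) is already in Theorem~\ref{thm:20160506}(2), and what is left is only the routine translation between the discrete-part statements of that theorem and the sets $\Disc$; the closest thing to a subtle point is simply being careful that the identities are equalities of sets rather than of multisets.
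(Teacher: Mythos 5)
Your proposal is correct and follows the paper's own route: the paper states this corollary as a direct consequence of Theorem \ref{thm:20160506}(2), and your argument is exactly that deduction, reading off the sets of irreducible constituents of $L^2(\subgpR G/\subgpR H)_d$ from the two Hilbert-sum expressions (the Schur-lemma bookkeeping and the passage from $\underline{\Disc}$ to $\Disc$ being the routine details the paper leaves implicit).
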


\subsection{Proof of Theorem \ref{thm:20160506}}
\label{subsec:pf}

The first assertion of Theorem \ref{thm:20160506}
 follows from the finite-multiplicity theorem
 for real spherical homogeneous spaces
 (see Fact \ref{fact:1.8}).

For Theorem \ref{thm:20160506} (2), 
 we begin with the relation between the multisets
 $\underline{\Disc}(\subgpR G/\subgpR H)$ 
 and $\underline{\Disc}(\subgpR L/\subgpR H)$.  
Suppose that $\subgpR L$ is a reductive subgroup
 of $\subgpR G$ containing $\subgpR H$.  
Then $\gpR F = \subgpR L/\subgpR H$ carries 
 an $\subgpR L$-invariant Radon measure.  
We decompose the unitary representation of $\subgpR L$ on $L^2(\gpR F)$ 
 into the \lq\lq{discrete}\rq\rq\
 and \lq\lq{continuous part}\rq\rq: 
\[
    L^2(\gpR F) \simeq L^2(\gpR F)_d \oplus L^2(\gpR F)_c, 
\]
where their irreducible decompositions are given by 
\begin{alignat*}{2}
   L^2(\gpR F)_d =& {\sum}^{\oplus}_{\tau \in \underline{\Disc}(\subgpR L/\subgpR H)}
 \tau
\qquad
&&\text{(Hilbert direct sum)}, 
\\
   L^2(\gpR F)_c
   \simeq&
   \int_{\dual{\subgpR L}}^{\oplus} m (\tau) \tau d \mu (\tau)
\qquad
&&\text{(direct integral)}.  
\end{alignat*}
The inclusive relation 
 $\subgpR H \subset \subgpR L \subset \subgpR G$
 induces a $\subgpR G$-equivariant map: 
\[
   \gpR X = \subgpR G/ \subgpR H \to \gpR Y:=\subgpR G/\subgpR L, 
\]
with typical fiber
 $\gpR F =\subgpR L/\subgpR H$.  
Accordingly, 
 the induction 
 by stages gives a decomposition
 of the regular representation of $\subgpR G$:
\begin{equation}
\label{eqn:XRdc}
   L^2(\gpR X)
  \simeq
   L^2(\subgpR G/\subgpR L,L^2(\gpR F)_d)
   \oplus
   L^2(\subgpR G/\subgpR L,L^2(\gpR F)_c).  
\end{equation}
We shall show 
 that $\Hom_{\subgpR G} (\vartheta, L^2(\subgpR G/\subgpR L, L^2(\gpR F)_c))$ is either zero or infinite-dimensional
 for any $\vartheta \in \dual {\subgpR G}$.

For a measurable set $S$ in $\dual{\subgpR L}$, 
 we define a subrepresentation
 of $\subgpR L$
 on the following closed subspace of 
$
   L^2(\gpR F)_c
$:
\[
  {\mathcal{H}}(S)
  :=
  \int_S^{\oplus} m(\tau) \tau d \mu(\tau).  
\]
In turn, 
 we obtain a unitary representation of $\subgpR G$
 defined on the closed subspace
 $L^2(\subgpR G/ \subgpR L, {\mathcal{H}}(S))$
 of $L^2(\subgpR G/ \subgpR L, L^2(\gpR F)_c)$.  

Suppose $\Hom_{\subgpR G}(\vartheta, L^2(\subgpR G/ \subgpR L,L^2(\gpR F)_c))\ne \{0\}$
 for some $\vartheta \in \dual {\subgpR G}$.  
We claim
 that there exist measurable subsets
 $S^{(1)}$ and $S^{(2)}$
 of $\dual {\subgpR L}$
 with $\mu(S^{(1)} \cap S^{(2)})=0$
 such that 
\[
  \Hom_{\subgpR G}(\vartheta, L^2(\subgpR G/ \subgpR L, {\mathcal{H}}(S^{(j)}))) \ne \{0\}
\quad
 \text{for $j=1,2$.}  
\]
Indeed, 
 if not,
 we would have a countable family
 of measurable sets
 $S_1 \supset S_2 \supset \cdots$
 in $\dual{\subgpR L}$
 such that 
\begin{align*}
   &\Hom_{\subgpR G}(\vartheta, L^2(\subgpR G/ \subgpR L, {\mathcal{H}}(S_j^{c}))) = \{0\}
\quad
\text{for all $j$}, 
\\
& \lim_{j \to \infty} \mu(S_j) =0, 
\end{align*}
where $S_j ^c := \dual{\subgpR L} \setminus S_j$ stands
 for the complement
 of $S_j$ in the unitary dual $\dual {\subgpR L}$.  
But this were impossible 
 because the discrete part of the unitary representation $L^2(\gpR F)_c$ 
 is zero.

Therefore, 
 the second factor of \eqref{eqn:XRdc} does not contribute
 to discrete series representations
 for $\subgpR G/\subgpR H$.  
Hence
\[
  L^2(\subgpR G/\subgpR H)_d
  =
\underset {\pi \in \underline{\Disc}(\subgpR G/\subgpR H)} {{\sum}^{\oplus}}
  \pi 
  \subset 
 \underset {\tau \in \underline{\Disc} (\subgpR L/\subgpR H)} {{\sum}^{\oplus}}
  L^2(\subgpR G/\subgpR L, \tau).  
\]
Thus we have proved:
\begin{proposition}
\label{prop:1608119}
Assume $\subgpR G/\subgpR H$ is real spherical
 and $\subgpR L$ is a reductive subgroup 
 of $\subgpR G$ containing $\subgpR H$.  
Then 
$
   L^2(\subgpR G/\subgpR H)_d
  \simeq 
  \sum_{\tau \in \underline{\Disc} (\subgpR L/\subgpR H)}^{\oplus}
  L^2(\subgpR G/\subgpR L, \tau)_d, 
$
 and 
we have a natural bijection
\[
\underline{\Disc}(\subgpR G/\subgpR H)
\simeq
\bigcup_{\tau \in \underline{\Disc}(\subgpR L/\subgpR H)}
\underline{\Disc}(\subgpR G/\subgpR L, \tau).  
\]
\end{proposition}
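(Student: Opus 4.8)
The plan is to assemble the two facts that have been isolated in the discussion preceding the statement. First, induction by stages along the fibration $\gpR X=\subgpR G/\subgpR H\to\gpR Y=\subgpR G/\subgpR L$ with typical fiber $\gpR F=\subgpR L/\subgpR H$ produces the orthogonal $\subgpR G$-decomposition \eqref{eqn:XRdc}; second, the bisection argument given above shows that for every $\vartheta\in\dual{\subgpR G}$ the space $\Hom_{\subgpR G}(\vartheta,L^2(\subgpR G/\subgpR L,L^2(\gpR F)_c))$ is $\{0\}$ or infinite-dimensional, so that the second summand of \eqref{eqn:XRdc} contributes nothing to the discrete part. Hence $L^2(\subgpR G/\subgpR H)_d$ sits inside the first summand, and since $L^2(\gpR F)_d\simeq{\sum}^{\oplus}_{\tau\in\underline{\Disc}(\subgpR L/\subgpR H)}\tau$, a further application of induction by stages (using that $\operatorname{Ind}_{\subgpR L}^{\subgpR G}$ commutes with Hilbert direct sums) identifies that first summand with ${\sum}^{\oplus}_{\tau\in\underline{\Disc}(\subgpR L/\subgpR H)}L^2(\subgpR G/\subgpR L,\tau)$.

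Next I would record the elementary observation that forming the discrete part commutes with countable Hilbert direct sums: if $\Pi\simeq{\sum}^{\oplus}_i\Pi_i$ then $\Pi_d\simeq{\sum}^{\oplus}_i(\Pi_i)_d$. The inclusion $\supseteq$ is immediate, because a Hilbert direct sum of discretely decomposable representations is again discretely decomposable; for $\subseteq$ it suffices to check that ${\sum}^{\oplus}_i(\Pi_i)_c$ carries no nonzero irreducible $\subgpR G$-subrepresentation, and were $\sigma$ such a subrepresentation, then for some $i$ the $\subgpR G$-equivariant orthogonal projection onto $(\Pi_i)_c$ would be nonzero, hence injective, on $\sigma$, contradicting that $(\Pi_i)_c$ has zero discrete part. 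Applying this with $\Pi_i=L^2(\subgpR G/\subgpR L,\tau)$ and using $L^2(\subgpR G/\subgpR L,\tau)_d\simeq{\sum}^{\oplus}_{\vartheta\in\underline{\Disc}(\subgpR G/\subgpR L,\tau)}\vartheta$ yields the first assertion of the Proposition, namely
\[ L^2(\subgpR G/\subgpR H)_d \simeq {\sum}^{\oplus}_{\tau\in\underline{\Disc}(\subgpR L/\subgpR H)} L^2(\subgpR G/\subgpR L,\tau)_d \simeq {\sum}^{\oplus}_{\tau\in\underline{\Disc}(\subgpR L/\subgpR H)}\ {\sum}^{\oplus}_{\vartheta\in\underline{\Disc}(\subgpR G/\subgpR L,\tau)}\vartheta, \]
the right-hand double sum being ${\sum}^{\oplus}$ over the disjoint union of multisets $\bigcup_{\tau}\underline{\Disc}(\subgpR G/\subgpR L,\tau)$.

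Finally, comparing the last display with the tautological expression $L^2(\subgpR G/\subgpR H)_d\simeq{\sum}^{\oplus}_{\pi\in\underline{\Disc}(\subgpR G/\subgpR H)}\pi$ and invoking the essential uniqueness of the irreducible decomposition of a unitary representation of the real reductive group $\subgpR G$ --- which on the purely discrete summand forces the underlying multisets of irreducible constituents to coincide --- produces the asserted bijection $\underline{\Disc}(\subgpR G/\subgpR H)\simeq\bigcup_{\tau\in\underline{\Disc}(\subgpR L/\subgpR H)}\underline{\Disc}(\subgpR G/\subgpR L,\tau)$. That all the multisets occurring here are well defined with finite multiplicities follows from Theorem \ref{thm:20160506}(1). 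The only genuinely delicate step is the interchange of ``discrete part'' with the direct integral hidden inside \eqref{eqn:XRdc}; this is precisely what the bisection argument preceding the statement disposes of, so no new obstacle is encountered and the work left here is routine bookkeeping with Hilbert direct sums.
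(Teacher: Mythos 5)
Your argument is correct and follows essentially the same route as the paper: induction by stages giving \eqref{eqn:XRdc}, the dichotomy (``zero or infinite-dimensional'') for $\Hom_{\subgpR G}(\vartheta, L^2(\subgpR G/\subgpR L, L^2(\gpR F)_c))$, and then bookkeeping with Hilbert direct sums, which you simply carry out in more detail than the paper does. The only point to place correctly is that ruling out the infinite-dimensional alternative---so that the continuous summand really contributes nothing to the discrete part---requires the finite-multiplicity consequence of real sphericity (Theorem \ref{thm:20160506}(1), via Fact \ref{fact:1.8}) and not the dichotomy alone; you invoke this only at the end for well-definedness of the multisets, whereas it is needed exactly at that earlier step, just as in the paper's own argument.
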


Similarly to Proposition \ref{prop:1608119}
 for the fibration ${\mathcal{K}}_2$, 
 one can prove the following results 
 for the fibration ${\mathcal{K}}_1:\underline{\Disc}(\subgpR G/\subgpR H)
\to \underline{\Disc}(\gpR G/\gpR H)$:
\begin{fact}
[{\cite[Theorem 2.1]{xk:1}}]
\label{fact:3.1}
Suppose we are in Setting \ref{set:realFXY}.  
If $\pi \in \dual{\gpR G}$ is realized
 as a discrete series representation
 for $L^2(\gpR G/\gpR H)$ 
 and if $\vartheta \in \dual{\subgpR G}$ satisfies
 $\operatorname{Hom}_{\subgpR G}(\vartheta, \pi|_{\subgpR G}) \ne \{0\}$, 
then $\vartheta$ can be realized 
 in a closed subspace of $L^2(\gpR G/\gpR H) = L^2(\subgpR G/\subgpR H)$
 and thus $\vartheta \in \operatorname{Disc} (\subgpR G/\subgpR H)$.  
Moreover if $\subgpR G/\subgpR H$ is real spherical, 
 then this correspondence induces
 a bijection between multisets:
\begin{equation}
\label{eqn:DDD}
   \underline{\operatorname{Disc}} (\subgpR G/\subgpR H)
   \simeq
\bigcup_{\pi \in \underline{\operatorname{Disc}} (\gpR G/\gpR H)}
   \underline{\operatorname{Disc}} (\pi|_{\subgpR G}), 
\end{equation}
and in particular,
 a bijection between sets:
\[
  {\operatorname{Disc}} (\subgpR G/\subgpR H)
   =
\bigcup_{\pi \in {\operatorname{Disc}} (\gpR G/\gpR H)}
   {\operatorname{Disc}} (\pi|_{\subgpR G}).  
\]
\end{fact}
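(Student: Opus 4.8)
The plan is to transfer everything to the $\subgpR G$-action on the single Hilbert space $L^2(\gpR X)$. Since $\gpR G$, $\subgpR G$, $\gpR H$, $\subgpR H$ are reductive, hence unimodular, both $\gpR G/\gpR H$ and $\subgpR G/\subgpR H$ carry invariant Radon measures, unique up to a positive scalar; the $\gpR G$-invariant measure on $\gpR G/\gpR H$ is a fortiori $\subgpR G$-invariant, so under the $\subgpR G$-equivariant diffeomorphism of Setting \ref{set:realFXY} it coincides (up to scalar) with the $\subgpR G$-invariant one, and
\[
   L^2(\gpR X)=L^2(\gpR G/\gpR H)=L^2(\subgpR G/\subgpR H)
\]
as unitary representations of $\subgpR G$. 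The embedding statement is then immediate: given $\pi\in\Disc(\gpR G/\gpR H)$, fix an isometric $\gpR G$-embedding $\pi\hookrightarrow L^2(\gpR G/\gpR H)$; on its (closed, $\gpR G$-stable) image $\subgpR G$ acts by $\pi|_{\subgpR G}$. If $\vartheta\in\dual{\subgpR G}$ admits a nonzero $T\in\Hom_{\subgpR G}(\vartheta,\pi|_{\subgpR G})$, then $T^{*}T\in\operatorname{End}_{\subgpR G}(\vartheta)$ is a positive scalar by Schur's lemma, so $T$ is a nonzero multiple of an isometry and $\vartheta$ is realized as a closed $\subgpR G$-subspace of $L^2(\gpR G/\gpR H)=L^2(\subgpR G/\subgpR H)$; hence $\vartheta\in\Disc(\subgpR G/\subgpR H)$.

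Now assume $\gpR X$ is real spherical and compare, for a fixed $\vartheta\in\dual{\subgpR G}$, the multiplicities on the two sides of \eqref{eqn:DDD}. By the definition of the multiset union and additivity of $\dim\Hom_{\subgpR G}(\vartheta,\,\cdot\,)$ over Hilbert direct sums of unitary representations, the multiplicity of $\vartheta$ on the right equals
\[
   \sum_{\pi}\bigl(\text{multiplicity of }\pi\text{ in }\underline{\Disc}(\gpR G/\gpR H)\bigr)\cdot\dim\Hom_{\subgpR G}(\vartheta,\pi|_{\subgpR G})
   =\dim\Hom_{\subgpR G}\!\bigl(\vartheta,(L^2(\gpR G/\gpR H)_d)|_{\subgpR G}\bigr),
\]
while the multiplicity on the left is $\dim\Hom_{\subgpR G}(\vartheta,L^2(\subgpR G/\subgpR H))=\dim\Hom_{\subgpR G}(\vartheta,L^2(\gpR G/\gpR H))$, both finite by Fact \ref{fact:1.8}. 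Splitting $L^2(\gpR G/\gpR H)=L^2(\gpR G/\gpR H)_d\oplus L^2(\gpR G/\gpR H)_c$ as $\gpR G$-modules, hence as $\subgpR G$-modules, and using additivity once more, the two multiplicities agree as soon as one proves the key fact that $(L^2(\gpR G/\gpR H)_c)|_{\subgpR G}$ has no discrete part, i.e.\ $\Hom_{\subgpR G}(\vartheta,L^2(\gpR G/\gpR H)_c)=\{0\}$ for every $\vartheta$.

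This key fact I would establish exactly as Proposition \ref{prop:1608119} is proved in Section \ref{subsec:pf}. Write $L^2(\gpR G/\gpR H)_c\simeq\int_{\dual{\gpR G}}^{\oplus}m(\pi)\,\pi\,d\mu(\pi)$ with $\mu$ non-atomic (as the continuous part has no discrete component), and for measurable $S\subset\dual{\gpR G}$ put $\mathcal H(S)=\int_{S}^{\oplus}m(\pi)\,(\pi|_{\subgpR G})\,d\mu(\pi)$, a $\subgpR G$-subrepresentation with $\dim\Hom_{\subgpR G}(\vartheta,\mathcal H(S))\le\dim\Hom_{\subgpR G}(\vartheta,L^2(\subgpR G/\subgpR H))<\infty$ (Fact \ref{fact:1.8}). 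Suppose $\Hom_{\subgpR G}(\vartheta,L^2(\gpR G/\gpR H)_c)\ne\{0\}$ for some $\vartheta$. If every measurable set carrying $\vartheta$ could be partitioned into two $\mu$-positive pieces each still carrying $\vartheta$, then iterating $k$ times and using additivity of $\dim\Hom_{\subgpR G}(\vartheta,\,\cdot\,)$ over disjoint pieces would give $\dim\Hom_{\subgpR G}(\vartheta,L^2(\gpR G/\gpR H)_c)\ge 2^{k}$ for all $k$, contradicting the finite bound above; hence some $S_0$ carrying $\vartheta$ admits no such partition. Bisecting $S_0$ repeatedly into halves of equal $\mu$-measure (possible since $\mu$ is non-atomic) and always passing to the half that carries $\vartheta$, one obtains $S_0\supset S_1\supset S_2\supset\cdots$ with $\mu(S_j)\to 0$ and $\Hom_{\subgpR G}(\vartheta,\mathcal H(S_0\setminus S_j))=\{0\}$ for all $j$; since $\mu(\bigcap_j S_j)=0$, the orthogonal projections of $\mathcal H(S_0)$ onto $\mathcal H(S_0\setminus S_j)$ converge strongly to the identity, so composing them with a nonzero $\vartheta\to\mathcal H(S_0)$ produces zero maps converging strongly to it, forcing it to vanish — a contradiction. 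Therefore $\Hom_{\subgpR G}(\vartheta,L^2(\gpR G/\gpR H)_c)=\{0\}$, the two multiplicity counts coincide, and for $\vartheta\in\Disc(\subgpR G/\subgpR H)$ its $\vartheta$-isotypic component in $L^2(\subgpR G/\subgpR H)=L^2(\gpR G/\gpR H)$ — of finite multiplicity by real sphericity — lies inside $L^2(\gpR G/\gpR H)_d={\sum}^{\oplus}_{\pi\in\underline{\Disc}(\gpR G/\gpR H)}\pi$, so $\vartheta$ is realized through some $\pi\in\Disc(\gpR G/\gpR H)$ with $\vartheta\in\Disc(\pi|_{\subgpR G})$. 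This yields the bijection of multisets \eqref{eqn:DDD}, and passing to underlying sets gives the last identity.

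The main obstacle is the key fact of the previous paragraph: one has to run the dichotomy so that the finite-multiplicity theorem for the real spherical space $\gpR X$ is genuinely invoked, and make sure the only extra ingredient needed is that the Plancherel measure of the \emph{continuous} part of $L^2(\gpR G/\gpR H)$ is non-atomic; the remaining steps (the measure identification, the Schur-lemma argument, and the multiplicity bookkeeping) are routine.
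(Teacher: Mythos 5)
Your proposal is correct and follows essentially the route the paper itself indicates for this Fact: the first assertion is the standard Schur-lemma embedding of $\vartheta$ through a closed copy of $\pi$ in $L^2(\gpR G/\gpR H)=L^2(\subgpR G/\subgpR H)$, and the multiset identity \eqref{eqn:DDD} is obtained by running, for the fibration ${\mathcal{K}}_1$, the same non-atomicity/shrinking-family argument used in the proof of Proposition \ref{prop:1608119}, combined with the finite-multiplicity theorem (Fact \ref{fact:1.8}) to rule out $\subgpR G$-discrete spectrum in the $\gpR G$-continuous part. The only differences are cosmetic (you interleave the finiteness bound into the dichotomy rather than proving ``zero or infinite-dimensional'' first), so this matches the paper's intended proof.
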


\begin{remark}
{\rm{
A weaker form of Fact \ref{fact:3.1} holds in a more general setting
 where $\subgpR G$ does not act transitively
 on $\gpR G/\gpR H$.  
See 
 {\rm{\cite[Theorems 5.1 and 8.6]{xkdisc}}}
 for instance.  
}}
\end{remark}

Combining Fact \ref{fact:3.1} with Proposition \ref{prop:1608119}, 
 we have completed the proof of the second statement
 of Theorem \ref{thm:20160506}.

To see the third statement
 of Theorem \ref{thm:20160506}, 
 assume $\vartheta \in \dual{\subgpR G}$
 satisfies 
 $\dim \invHom{\subgpR G}{\vartheta}{L^2(\gpR X)}=1$.  
By Theorem \ref{thm:20160506} (2), 
 there exists a unique $\pi \in \dual {\gpR G}$
 such that 
\[
  \dim \invHom{\gpR G}{\pi}{L^2(\gpR X)}
  \cdot 
  \dim \invHom{\subgpR G}{\vartheta}{\pi|_{\subgpR G}}
=1, 
\]
and there exists a unique $\tau \in \dual {\subgpR L}$
 such that 
\[
  \dim \invHom{\subgpR L}{\tau}{L^2(\gpR F)}
  \cdot 
  \dim \invHom{\subgpR G}{\vartheta}{L^2(\subgpR G/\subgpR L, \tau)}
=1, 
\]
where we recall $\gpR F = \subgpR L/\subgpR H$.  
Then all the multiplicities involved are one,
 and we have 
\begin{equation}
\label{eqn:MFtheta}
  L^2(\gpR X)[\vartheta]
  \simeq
  (\pi|_{\subgpR G})[\vartheta]
  \simeq
  L^2(\subgpR G/\subgpR L, \tau)[\vartheta]
  \simeq
  \vartheta.  
\end{equation}
Hence the correspondence
$\vartheta \mapsto ({\mathcal{K}}_1(\vartheta), {\mathcal{K}}_2(\vartheta)) =(\pi,\tau)$
 defines the desired double fibration.  
Thus Theorem \ref{thm:20160506} (3) is proved.  
{\qed}

\subsection{Perspectives of Theorem \ref{thm:20160506}}
\label{subsec:perspectives}

We shall enrich the double fibration 
in Theorem \ref{thm:20160506}
by two general results:
\begin{enumerate}
\item[$\bullet$]
relations among the infinitesimal characters
 (or joint eigenvalues
 of invariant differential operators)
 of three representations
 $\vartheta$, $\pi={\mathcal{K}}_1(\vartheta)$, 
 and $\tau={\mathcal{K}}_2(\vartheta)$
 (Theorems \ref{thm:B} and \ref{thm:C}), 
\item[$\bullet$]
discretely decomposability
 of the restriction 
 of a unitary representation of $\gpR G$
 to the subgroup $\subgpR G$
 under the assumption
 that the fiber $\gpR F=\subgpR L/\subgpR H$ is compact
 (Theorem \ref{thm:deco}).  
\end{enumerate}
We note
 that the latter depends heavily
 on the real forms, 
whereas the former depends only on the complexifications.  
This observation allows us to get useful results
 on infinite-dimensional representations from computation
 of finite-dimensional representations.  
We shall illustrate
 this idea by finding the branching rule
 of unitary representations
 for $S O(8,8) \downarrow Spin(1,8)$ from 
 finite-dimensional branching rules
 for compact groups $SO(16) \downarrow Spin(9)$.

\begin{remark}
One may observe
 that there is some similarity
 between Howe's theory of dual pair
 \cite{xhowe, xhower, ht93} and Theorem \ref{thm:20160506}
 in the fibration 
 $\gpR F \to \gpR X \to \gpR Y$
(see \eqref{eqn:FXYR})
 even though
 neither the fiber $\gpR F=\subgpR L/\subgpR H$ 
 nor the base space $\gpR Y=\subgpR G/\subgpR L$ is a group. 
When $\gpR F \to \gpR X \to \gpR Y$
 is a Hopf bundle
 corresponding to the cases in Table\ref{table:GHLU}
 (i), (iii), and (v)
 or their noncompact real forms
 in Table \ref{table:GHL})
 a part of Theorem \ref{thm:20160506} may be understood from 
this viewpoint.   
\end{remark}

\section{Invariant differential operators with hidden symmetry}
\label{sec:2}
\subsection{Spherical homogeneous spaces---revisited}

We give a quick review
 on known results 
 about spherical homogeneous spaces from the three points
 of view---geometry, invariant differential operators,
 and representation theory.

Let $\gpC G$ be a complex reductive group, 
 $\gpC H$ an algebraic reductive subgroup, 
 and $\gpC X:= \gpC G/\gpC H$.  
Let ${\mathbb{D}}_{\gpC G}(\gpC X)$ be 
 the ${\mathbb{C}}$-algebra 
 of $\gpC G$-invariant holomorphic differential operators
 on $\gpC X$.

An algebraic subgroup $\gpR G$ of $\gpC G$ is a {\it{real form}}
 if $\operatorname{Lie} (\gpC G) \simeq \operatorname{Lie} (\gpR G) \otimes_{\mathbb{R}} {\mathbb{C}}$, 
 where $\operatorname{Lie} (\,\,)$ denotes the functor from 
Lie groups to their Lie algebras.  
We say that $(\gpR G, \gpR H)$ is a {\it{real form}}
 of the pair $(\gpC G, \gpC H)$
 if $\gpR G$ and its subgroup $\gpR H$
 are real forms of $\gpC G$ and $\gpC H$, 
 respectively.  
A real form $(\gpR G, \gpR H)$ is said to be a {\it{compact real form}}
 if $\gpR G$ is compact.  
In this case,
 we shall use the letter $(\gpcpt G, \gpcpt H)$
 instead of $(\gpR G, \gpR H)$.

\begin{def-thm}
\label{defthm:sp}
The following seven conditions on the pair $(\gpC G, \gpC H)$
 are equivalent.  
In this case, 
 $\gpC X= \gpC G/\gpC H$
 is called $\gpC G$-spherical.  

\par\noindent
{\rm{(Geometry)}}
\par\noindent
{\rm{(i)}}\enspace
$\gpC X$ admits an open orbit 
 of a Borel subgroup of $\gpC G$.  
\par\noindent
{\rm{(ii)}}\enspace
$\gpC H$ has an open orbit in the flag variety of $\gpC G$.  
\vskip 0.5pc
\par\noindent
{\rm{(Ring structure of ${\mathbb{D}}_{\gpC G}(\gpC X)$)}}
\par\noindent
{\rm{(iii)}}\enspace
${\mathbb{D}}_{\gpC G}(\gpC X)$ is commutative.  
\par\noindent
{\rm{(iv)}}\enspace
${\mathbb{D}}_{\gpC G}(\gpC X)$ is a polynomial ring.  
\vskip 0.5pc
\par\noindent
{\rm{(Representation theory)}}
\par\noindent
{\rm{(v)}}\enspace
If $(\gpcpt G,\gpcpt H)$ is a compact real form
 of $(\gpC G, \gpC H)$, 
 then 
$\dim \operatorname{Hom}_{\gpcpt G}(\pi, C^{\infty}(\gpcpt G/\gpcpt H)) \le 1$
 for all $\pi \in \dual {\gpcpt G}$.  
\par\noindent
{\rm{(vi)}}\enspace
There exist a real form $(\gpR G, \gpR H)$
 of $(\gpC G, \gpC H)$
 and a constant $C >0$
 such that 
\[
  \dim \operatorname{Hom}_{\gpR G}(\pi, C^{\infty}(\gpR G/\gpR H)) \le C
\qquad
\text{for all } \pi \in \widehat {(\gpR G)}_{\operatorname{adm}}.  
\]
\par\noindent
{\rm{(vii)}}\enspace
There exists a constant $C >0$ 
 such that 
\[
  \dim \operatorname{Hom}_{\gpR G}(\pi, {\mathcal{D}}'(\gpR G/\gpR H)) \le C
\qquad
\text{for all } \pi \in \widehat {(\gpR G)}_{\operatorname{adm}},  
\]
for all real form $(\gpR G, \gpR H)$
 of $(\gpC G, \gpC H)$.  
\end{def-thm}
\begin{proof}
The equivalence (i) $\Leftrightarrow$ (ii), 
 and the implications (iv) $\Rightarrow$ (iii), 
 (v) $\Rightarrow$ (vi), 
 and (vii) $\Rightarrow$ (vi) are obvious.  
For the equivalence (i) $\Leftrightarrow$ (iii), 
 see \cite{vin01}.  
The equivalence (i) $\Leftrightarrow$ (iv) was proved
 by Knop \cite{kno94}.  
For a compact real form $\gpcpt G$ of $\gpC G$, 
 the equivalence (i) $\Leftrightarrow$ (v) was proved 
 in Vinberg--Kimelfeld \cite{vk78}.  
For noncompact real forms $(\gpR G, \gpR H)$, 
 we need to take infinite-dimensional representations
 of $\gpR G$ into account,
 and the equivalence (i) $\Leftrightarrow$ (vi) $\Leftrightarrow$ (vii)
 is due to \cite{xktoshima}.  
\end{proof}

\begin{remark}
{\rm{
We have confined ourselves 
 to reductive pairs $(\gpC G, \gpC H)$
 in this article, 
 however, 
 the above equivalence extends to a more general setting
 where $\gpC H$ is not reductive.  
See \cite{xktoshima}
 and the references therein 
 for a precise statement.  
}}
\end{remark}

\begin{example}
{\rm{
Any complex reductive symmetric space $\gpC G/\gpC H$
 is $\gpC G$-spherical.  
Their real forms $\gpR G/\gpR H$
 were classified infinitesimally
 by Berger \cite{ber}.  
Typical examples are real forms $\gpR G/\gpR H=SL(n, {\mathbb{R}})/SO(p,q)$
 and $SU(p,q)/SO(p,q)$
 $(p+q=n)$
 of the complex reductive symmetric spaces
 $\gpC G/\gpC H= SL(n,{\mathbb{C}})/SO(n,{\mathbb{C}})$.

There are also nonsymmetric spherical homogeneous spaces
 $\gpC G/\gpC H$ 
 such as 
\[
\text{
$GL(2n+1,{\mathbb{C}})/({\mathbb{C}}^{\times} \times Sp(n,{\mathbb{C}}))$ or $SO(2n+1,{\mathbb{C}})/GL(n,{\mathbb{C}})$.  
}
\]
The homogeneous spaces $\subgpC G/\subgpC H$
 in Table \ref{table:GHLC}
 are also nonsymmetric spherical homogeneous spaces
 $\subgpC G/\subgpC H$.  
See also Kr{\"a}mer \cite{kra79}, 
 Brion \cite{xbrion}, 
 and Mikityuk \cite{xmik}
 for the classification
 of spherical homogeneous spaces.  
}}
\end{example}

\subsection{Preliminaries on invariant differential operators}

This section summarizes
 classical results
 on the algebra
 of invariant differential operators
 on homogeneous spaces
 of reductive groups.  
We let the complex Lie algebra $\gpC {\mathfrak{g}}$
 of $\gpC G$ act
 as holomorphic vector fields
 on $\gpC G$ in two ways:
\begin{alignat*}{2}
&\text{a right $\gpC G$-invariant vector field given by }
\quad
&&x \mapsto d l (Z)_x:= \left. \frac{d}{d t}\right|_{t=0} e^{-t Z}x, 
\\
&\text{a left $\gpC G$-invariant vector field given by }
&&x \mapsto d r (Z)_x:= \left. \frac{d}{d t}\right|_{t=0} x e^{t Z},    
\end{alignat*}
 for $Z \in \gpC {\mathfrak{g}}$.  
Let $U(\gpC {\mathfrak{g}})$ be the enveloping algebra 
 of $\gpC {\mathfrak{g}}$.  
Then the Lie algebra homomorphisms
 $d l: \gpC {\mathfrak{g}} \to {\mathfrak{X}}(\gpC G)$
 and $d r: \gpC {\mathfrak{g}} \to {\mathfrak{X}}(\gpC G)$
 extend to injective ${\mathbb{C}}$-algebra homomorphisms from $U(\gpC {\mathfrak{g}})$
 into the ring ${\mathbb{D}}(\gpC G)$
 of holomorphic differential operators on $\gpC G$, 
 and we get a ${\mathbb{C}}$-algebra homomorphism:
\begin{equation}
\label{eqn:lrG}
d l \otimes d r :
U(\gpC {\mathfrak{g}}) \otimes U(\gpC {\mathfrak{g}})
 \to {\mathbb{D}}({\gpC G}).  
\end{equation}
Let ${\mathfrak {Z}}(\gpC {\mathfrak{g}})$ be 
 the center of $U(\gpC {\mathfrak{g}})$.  
Then we have
\[
   d l({\mathfrak {Z}}(\gpC {\mathfrak{g}}))= d r ({\mathfrak {Z}}(\gpC {\mathfrak{g}})) = dl (U(\gpC {\mathfrak{g}})) \cap d r(U(\gpC {\mathfrak{g}})).  
\]
Suppose that $\gpC H$ is a reductive subgroup of $\gpC G$, 
 and we set $\gpC X= \gpC G/\gpC H$
 as before.  
Let ${\mathbb{D}}(\gpC X)$ be the ring of holomorphic
 differential operators on $\gpC X$.   
We write $U(\gpC {\mathfrak{g}})^{\gpC H}$
 for the subalgebra
 of $U(\gpC {\mathfrak{g}})$
 consisting of $\gpC H$-invariant elements
 under the adjoint action.

Then the homomorphism \eqref{eqn:lrG} induces the following diagram.  
\begin{alignat}{5}
&
&& {\mathfrak{Z}}(\gpC {\mathfrak{g}}) 
&&\otimes {\mathbb{C}} 
&&\to \,\,
&&{\mathbb{D}}_{\gpC G}(\gpC X)
\label{eqn:ZD}
\\
&
&&
&&\,\,\cap 
&&
&& \quad\cap
\notag
\\
&dl \otimes dr 
:\,
&&U(\gpC {\mathfrak{g}}) 
  &&\otimes U(\gpC {\mathfrak{g}})^{\gpC H} 
&&\to 
&&{\mathbb{D}}(\gpC X)
\notag
\\
&
&& 
&&\,\,\cup 
&&
&& \quad \cup
\notag
\\
&
&&\quad\quad {\mathbb{C}} 
&&\otimes U(\gpC {\mathfrak{g}})^{\gpC H} 
&&\to 
&&{\mathbb{D}}_{\gpC G}(\gpC X)
\label{eqn:UgHD}
\end{alignat}

These homomorphisms \eqref{eqn:ZD} and \eqref{eqn:DGX}
 map into ${\mathbb{D}}_{\gpC G}(\gpC X)$, 
 however, 
 none of them is very useful
 for the description 
 of the ring ${\mathbb{D}}_{\gpC G}(\gpC X)$
 when $\gpC X= \gpC G/\gpC H$ is a nonsymmetric spherical homogeneous space:  
\begin{remark}
\label{rem:3.4}
{\rm{
\begin{enumerate}
\item[{\rm{(1)}}]
${\mathfrak{Z}}(\gpC {\mathfrak{g}})$ is a polynomial algebra
 that is 
 well-understood
 by the Harish-Chandra isomorphism \eqref{eqn:Zg} below, 
but the homomorphism \eqref{eqn:ZD} is rarely surjective 
 when $\gpC G/\gpC H$ is nonsymmetric
 ({\it{i.e.}}, the \lq\lq{abstract Capelli problem}\rq\rq\ 
 \` a la Howe--Umeda \cite{hu91} has a negative answer).  
\item[{\rm{(2)}}]
\eqref{eqn:UgHD} is always surjective \cite{HelGGA},
 but the ring $U(\gpC {\mathfrak{g}})^{\gpC H}$ 
 is noncommutative and is hard to treat in general.  
\end{enumerate}
}}
\end{remark}

In Section \ref{subsec:3.3}, 
 we shall consider simultaneously
 three rings ${\mathbb{D}}_{\gpC L}(\gpC F)$, 
 ${\mathbb{D}}_{\gpC G}(\gpC X)$, 
 and ${\mathbb{D}}_{\subgpC G}(\gpC X)$
 with the notation therein, 
 and Remark \ref{rem:3.4} will be applied to the third one,
 ${\mathbb{D}}_{\subgpC G}(\gpC X)$.

We review briefly the well-known structural results
 on ${\mathbb{D}}_{\gpC G}(\gpC X)$
 when $\gpC X$ is a symmetric space.

Suppose that $\gpC X= \gpC G/\gpC H$ is a complex reductive symmetric space,
 {\it{i.e.}} $\gpC H$ is an open subgroup 
 of the group $\gpC G^{\sigma}$ of fixed points of $\gpC G$
 for some holomorphic involutive automorphism $\sigma$.  
Let $\gpC {\mathfrak{g}} = \gpC {\mathfrak{h}} + \gpC {\mathfrak{q}}$
 be the decomposition of $\gpC {\mathfrak{g}}$
 into eigenspaces of $d \sigma$, 
 with eigenvalues $+1$, $-1$, 
respectively.  
Fix a maximal semisimple abelian subspace $\gpC {\mathfrak{a}}$
 of $\gpC {\mathfrak{q}}$.  
Let $W$ be the Weyl group of the restricted root system
 $\Sigma(\gpC {\mathfrak{g}}, \gpC {\mathfrak{a}})$
 of $\gpC {\mathfrak{a}}$ in $\gpC {\mathfrak{g}}$.  
Then there is a natural isomorphism of ${\mathbb{C}}$-algebras:
\begin{equation}
\label{eqn:DGX}
\Psi:
 {\mathbb{D}}_{\gpC G}(\gpC X) \overset \sim \to S(\gpC {\mathfrak{a}})^W, 
\end{equation}
known as the Harish-Chandra isomorphism.  
In turn,
 any $\nu \in \gpC {\mathfrak{a}}^{\ast}/W$
 gives rise to a ${\mathbb{C}}$-algebra homomorphism
\[
  \chi_{\nu}^{\gpC X}:
  {\mathbb{D}}_{\gpC G}(\gpC X) \to {\mathbb{C}}, 
  \quad
  D \mapsto \langle \Psi(D), \nu\rangle.  
\]
Conversely, 
 any ${\mathbb{C}}$-algebra homomorphism
 ${\mathbb{D}}_{\gpC G}(\gpC X) \to {\mathbb{C}}$
 is written uniquely
 in this form, 
 and thus we have a natural bijection:
\begin{equation}
\label{eqn:DGXC}
   \gpC {\mathfrak{a}}^{\ast}/W
   \overset \sim \to
   \Hom_{{\mathbb{C}}\operatorname{-alg}}
 ({\mathbb{D}}_{\gpC G}({\gpC X}),{\mathbb{C}}), 
  \quad
  \nu \mapsto \chi_{\nu}^{\gpC X}.  
\end{equation}

In the special case that $\gpC X$ is a group manifold
 $\gpC G \simeq (\gpC G \times \gpC G)/\diag (\gpC G)$
 regarded as a symmetric space
 by the involution $\sigma(x,y) = (y,x)$, 
 the Harish-Chandra isomorphism \eqref{eqn:DGX} amounts
 to the isomorphism 
\begin{equation}
\label{eqn:Zg}
  {\mathfrak {Z}}(\gpC {\mathfrak {g}})
  \simeq
  {\mathbb{D}}_{\gpC G \times \gpC G}(\gpC G)
  \simeq
  S(\gpC {\mathfrak{j}})^{W(\gpC {\mathfrak{g}})}, 
\end{equation}
 where $\gpC {\mathfrak{j}}$ is a Cartan subalgebra
 of $\gpC {\mathfrak{g}}$
 and $W(\gpC {\mathfrak{g}})$
 denotes the Weyl group
 of the root system $\Delta(\gpC {\mathfrak{g}}, \gpC {\mathfrak{j}})$.  
Then any $\lambda \in \gpC {\mathfrak{j}}^{\ast}/W(\gpC {\mathfrak{g}})$
 induces a ${\mathbb{C}}$-algebra homomorphism
$
  \chi_{\lambda}^{\gpC G}
  :
  {\mathfrak{Z}}(\gpC {\mathfrak{g}}) \to {\mathbb{C}}
$, 
 and the bijection \eqref{eqn:DGXC} reduces to:
\begin{equation}
\label{eqn:ZgC}
   \gpC {\mathfrak{j}}^{\ast}/W(\gpC {\mathfrak{g}})
   \overset \sim \to
   \Hom_{{\mathbb{C}}\operatorname{-alg}}
 ({\mathfrak{Z}}(\gpC {\mathfrak{g}}),{\mathbb{C}}), 
  \quad
  \lambda \mapsto \chi_{\lambda}^{\gpC G}.  
\end{equation}

When $\gpC X = \gpC G/\gpC H$ is $\gpC G$-spherical, 
 then by work of Knop \cite{kno94}, 
 there is an isomorphism
 analogous to the Harish-Chandra homomorphism, 
but it is less explicit.

\subsection{Three subalgebras
 in ${\mathbb{D}}_{\subgpC G}(\gpC X)$}
\label{subsec:3.3}
Suppose that the triple $(\gpC G, \subgpC G, \gpC H)$
 of complex Lie groups are in Setting \ref{set:cpxFXY}.  
It turns out
 that the subgroup $\subgpC H := \subgpC G \cap \gpC H$ is not necessarily 
 a maximal reductive subgroup of $\subgpC G$
 even when $\gpC H$ is maximal in $\gpC G$.  
We take a complex reductive subgroup $\subgpC L$ of $\subgpC G$
 containing $\subgpC H$, 
 and set $\gpC F :=\subgpC L/\subgpC H$
 and $\gpC Y :=\subgpC G/\subgpC L$.  
Then we have a natural holomorphic fibration
\begin{equation}
\label{eqn:FXYC}
\gpC F \hookrightarrow \gpC X \twoheadrightarrow \gpC Y.  
\end{equation}

By using the geometry \eqref{eqn:FXYC}, 
 we shall give a detailed description
 of the ${\mathbb{C}}$-algebra 
 ${\mathbb{D}}_{\subgpC G}(\gpC X)$
 that will enrich the double fibration
 for representations
 of the three groups $\gpC G$, $\subgpC L$, and $\subgpC G$
 in Theorem \ref{thm:20160506}. 
For this, 
 we introduce the three subalgebras
 ${\mathcal{P}}$, ${\mathcal{Q}}$ and ${\mathcal{R}}$
in ${\mathbb{D}}_{\subgpC G}(\gpC X)$ as below.

First we extend  $\subgpC L$-invariant differential operators
 on the fiber $\gpC F$ 
 can be extended to $\subgpC G$-invariant ones
 on $\gpC X$, 
 as follows:
 for any $D \in {\mathbb{D}}_{\subgpC L}(\gpC F)$, 
 for any holomorphic function $f$ defined 
in an open set $V$ of $\gpC X$, 
 and for any $g \in \subgpC G$, 
 we set
\begin{equation}
\label{eqn:iotaF}
 (\iota(D)f)|_{g \gpC F}
:=
  ((l_g^{\ast})^{-1} \circ D \circ l_g^{\ast})
  (f|_{g \gpC F}), 
\end{equation}
where $l_g:\gpC X \to \gpC X$
 is the left translation by $g$, 
 and $l_g^{\ast}:{\mathcal{O}}(g V) \to {\mathcal{O}}(V)$
 is the pull-back by $l_g$.  
Then the right-hand side of \eqref{eqn:iotaF}
 is independent
 of the representative $g$
 in $g \gpC F$ 
 since $D$ is $\subgpC L$-invariant,
 and thus $\iota(D)$ gives rise
 to a $\subgpC G$-invariant holomorphic differential operator
 on $\gpC X$.  
Clearly,
 $D=0$
 if $\iota(D)=0$.  
Thus we have obtained a natural injective ${\mathbb{C}}$-algebra homomorphism
\[
   \iota:
   {\mathbb{D}}_{\subgpC L}(\gpC F) \to {\mathbb{D}}_{\subgpC G}(\gpC X).  
\]

We thus have the following three algebras
 in ${\mathbb{D}}_{\subgpC G}(\gpC X)$:
\begin{align}
{\mathcal{P}}:=& {\mathbb{D}}_{\gpC G}(\gpC X), 
\label{eqn:P}
\\
{\mathcal{Q}}:=& \iota({\mathbb{D}}_{\subgpC L}(\gpC F)), 
\label{eqn:Q}
\\
{\mathcal{R}}:=& dl ({\mathfrak{Z}}(\subgpC {\mathfrak {g}})).  
\label{eqn:R}
\end{align}
The subalgebra ${\mathcal{P}}$ 
reflects the hidden symmetry of $\gpC X=\subgpC G/\subgpC H$
by the overgroup $\gpC G$.  
The subalgebra ${\mathcal{Q}}$ depends on the choice
 of $\subgpC L$, 
 and is interesting
 if the fiber $\gpC F$ is nontrivial, 
 equivalently,
 if $\subgpC L$ satisfies 
 $\subgpC H \subsetneqq \subgpC L \subsetneqq \subgpC G$.  
We shall take $\subgpC L$
 to be a maximal reductive subgroup of $\subgpC G$
 containing $\subgpC H$.

Here is a description of 
 ${\mathbb{D}}_{\subgpC G}(\gpC X)$
by choosing any two
 of the three subalgebras
 ${\mathcal{P}}$, ${\mathcal{Q}}$, and ${\mathcal{R}}$:

\begin{theorem}
[{\cite{KKinv}}]
\label{thm:A}
Assume that $\gpC X$ is $\subgpC G$-spherical
 in Setting \ref{set:cpxFXY}.  
\begin{enumerate}
\item[{\rm{(1)}}]
The polynomial algebra ${\mathbb{D}}_{\subgpC G}(\gpC X)$
 is generated
 by ${\mathcal{P}}$ and ${\mathcal{R}}$.

{}From now,
 we take a maximal reductive subgroup $\subgpC L$ of $\subgpC G$ containing $\subgpC H$.  
\item[{\rm{(2)}}]
${\mathbb{D}}_{\subgpC G}(\gpC X)$ is generated
 by ${\mathcal{P}}$ and ${\mathcal{Q}}$.
\item[{\rm{(3)}}]
${\mathbb{D}}_{\subgpC G}(\gpC  X)$ is generated
 by ${\mathcal{Q}}$ and ${\mathcal{R}}$
 if $\gpC G$ is simple.  
\end{enumerate}
\end{theorem}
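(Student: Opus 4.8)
The plan is to pass to a compact real form and then read off everything from the Peter--Weyl decomposition. Each of the four algebras $\mathbb{D}_{\subgpC G}(\gpC X)$, $\mathcal P$, $\mathcal Q$, $\mathcal R$ is built only from the complexified data (holomorphic operators on $\gpC X$, the enveloping algebra of $\subgpC{\mathfrak g}$, the map $\iota$ of \eqref{eqn:iotaF}), and the relation $\subgpC{\mathfrak g}+\gpC{\mathfrak h}=\gpC{\mathfrak g}$ of Lemma \ref{lem:1.1}(iii) is infinitesimal, hence survives on a compact real form $(\gpcpt G,\subgpcpt G,\subgpcpt L,\gpcpt H,\subgpcpt H)$ of the whole configuration (Setting \ref{set:cptFXY}). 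This identifies $\mathbb{D}_{\subgpC G}(\gpC X)=\mathbb{D}_{\subgpcpt G}(\gpcpt X)$, $\mathcal P=\mathbb{D}_{\gpcpt G}(\gpcpt X)$, $\mathcal R=dl(\mathfrak{Z}(\subgpcpt{\mathfrak g}))$ and $\mathcal Q=\iota(\mathbb{D}_{\subgpcpt L}(\gpcpt F))$, so it suffices to argue in the compact case. Since $\gpC X$ is $\subgpC G$-spherical, Definition-Theorem \ref{defthm:sp}(v) makes $L^2(\gpcpt X)={\sum}^{\oplus}_{\vartheta\in\Lambda'}V_\vartheta$ multiplicity-free over its support $\Lambda'\subset\dual{\subgpcpt G}$; each $D\in\mathbb{D}_{\subgpcpt G}(\gpcpt X)$ preserves $V_\vartheta$ and acts there by a scalar $\lambda_D(\vartheta)$, so $D\mapsto(\lambda_D(\vartheta))_{\vartheta}$ is an injective $\mathbb C$-algebra map into $\operatorname{Map}(\Lambda',\mathbb C)$. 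Moreover $\gpC X=\gpC G/\gpC H$ is also $\gpC G$-spherical (a Borel of $\subgpC G$ with an open orbit in $\gpC X$ lies in a Borel of $\gpC G$, which then also has one), and once $\subgpC L$ is chosen reductive the fibre $\gpC F=\subgpC L/\subgpC H$ is $\subgpC L$-spherical because $\mathbb{D}_{\subgpC L}(\gpC F)\cong\mathcal Q$ is a subalgebra of the commutative ring $\mathbb{D}_{\subgpC G}(\gpC X)$; hence the same scalar picture applies to $\mathcal P$ through $L^2(\gpcpt X)|_{\gpcpt G}$ and to $\mathcal Q$ through $L^2(\gpcpt F)|_{\subgpcpt L}$.

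Next I would locate $\mathcal P$, $\mathcal Q$, $\mathcal R$ inside $\operatorname{Map}(\Lambda',\mathbb C)$ via the double fibration of Theorem \ref{thm:20160506}(3), which in this compact multiplicity-free setting yields maps $\mathcal K_1\colon\Lambda'\to\Lambda$ (sending $\vartheta$ to the unique $\gpcpt G$-type of $L^2(\gpcpt X)$ containing it) and $\mathcal K_2\colon\Lambda'\to\Lambda_F$ (sending $\vartheta$ to the unique $\subgpcpt L$-type $\tau$ of $L^2(\gpcpt F)$ with $\vartheta\subset L^2(\subgpcpt G/\subgpcpt L,\tau)$). Tracing through the identity \eqref{eqn:MFtheta}: $\lambda_D(\vartheta)$ depends only on $\mathcal K_1(\vartheta)$ for $D\in\mathcal P$; only on $\mathcal K_2(\vartheta)$ for $D\in\mathcal Q$ (the operator $\iota(D_0)$ is fibrewise, so its scalar on $V_\vartheta$ is the scalar of $D_0$ on the $\mathcal K_2(\vartheta)$-component of $L^2(\gpcpt F)$); and $\lambda_D(\vartheta)$ equals the Harish--Chandra infinitesimal character $\chi_\vartheta$, a $W(\subgpC{\mathfrak g})$-invariant polynomial in the $\rho$-shifted highest weight of $\vartheta$, for $D\in\mathcal R=dl(\mathfrak{Z}(\subgpC{\mathfrak g}))$. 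Via the Harish--Chandra-type isomorphism of Knop one realizes $\mathbb{D}_{\subgpC G}(\gpC X)\cong S(\gpC{\mathfrak a})^{W}$, with $\gpC{\mathfrak a}$ the spherical Cartan subspace and $W$ the little Weyl group of $(\subgpC G,\gpC X)$, together with a Zariski-dense embedding $\Lambda'\hookrightarrow\gpC{\mathfrak a}^{\ast}/W$ under which $\lambda_D$ becomes evaluation of an invariant polynomial; in these coordinates $\mathcal P$ is the pullback of the invariant-polynomial ring of $(\gpC G,\gpC X)$ along a linear projection (the $\mathcal K_1$-direction), $\mathcal Q$ the pullback of that of $(\subgpC L,\gpC F)$ along $\mathcal K_2$, and $\mathcal R$ the image of the Chevalley restriction $S(\subgpC{\mathfrak j})^{W(\subgpC{\mathfrak g})}\to S(\gpC{\mathfrak a})^{W}$ from a Cartan subalgebra $\subgpC{\mathfrak j}$ of $\subgpC{\mathfrak g}$.

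It then remains, for (1)--(3), to show that the relevant pair of these subalgebras generates all of $S(\gpC{\mathfrak a})^{W}$. The soft half is point separation on $\Lambda'$: that $(\mathcal K_1(\vartheta),\chi_\vartheta)$, resp.\ $(\mathcal K_1(\vartheta),\mathcal K_2(\vartheta))$, resp.\ $(\mathcal K_2(\vartheta),\chi_\vartheta)$, determines $\vartheta$; the middle case is immediate from $\mathcal K_1\times\mathcal K_2$ being injective when $\subgpC L$ is maximal reductive. The substantive half is to upgrade point separation to generation --- a point-separating subalgebra of a polynomial ring need not be the whole ring --- and for this I would pass to associated graded algebras and prove that the principal symbols of the chosen generators generate the Poisson-commutative algebra $\operatorname{gr}\mathbb{D}_{\subgpC G}(\gpC X)=\mathbb C[T^{\ast}\gpC X]^{\subgpC G}$, i.e.\ establish the statement at the level of the moment map for the $\subgpC G$-action on $T^{\ast}\gpC X$ (where the overgroup $\gpC G$ supplies the invariants behind $\mathcal P$, the fibre geometry those behind $\mathcal Q$, and Chevalley restriction those behind $\mathcal R$); then $\operatorname{gr}A=\operatorname{gr}\mathbb{D}_{\subgpC G}(\gpC X)$ forces $A=\mathbb{D}_{\subgpC G}(\gpC X)$, consistently with a Krull-dimension count against $\operatorname{rank}_{\subgpC G}\gpC X$. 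I expect this symbol-level invariant-theoretic step to be the main obstacle, and (3) to be the delicate case: a large centre of $\subgpC{\mathfrak g}$ can render $\mathcal R$ deficient, and the hypothesis that $\gpC G$ is simple is what should guarantee that the $\gpC G$-invariant operators $\mathcal P$ --- hence the directions transverse to the fibre $\gpC F$ inside $\gpC X$ --- are already recovered from $\mathcal Q$ and $\mathcal R$. As an alternative to the general symbol computation, one could reduce (1)--(3) to the classification of such hidden-symmetry configurations and verify them on the finitely many families, using the explicit branching laws behind Theorem \ref{thm:20160506}.
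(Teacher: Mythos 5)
Your overall architecture matches what the paper indicates: reduction to the compact real form, multiplicity-freeness of $L^2(\gpcpt X)$ coming from $\subgpC G$-sphericity, realization of ${\mathbb{D}}_{\subgpC G}(\gpC X)$ by its scalars on $\Disc(\subgpcpt G/\subgpcpt H)$, and the placement of ${\mathcal{P}}$, ${\mathcal{Q}}$, ${\mathcal{R}}$ via the double fibration $({\mathcal{K}}_1,{\mathcal{K}}_2)$ of Theorem \ref{thm:20160506} and Lemma \ref{lem:invMF}. Be aware, however, that the present paper does not itself prove Theorem \ref{thm:A}: it defers the proof to \cite{KKinv}, and the only guidance it gives is that the key ingredient is the \emph{explicit} determination of the map \eqref{eqn:K} for the compact real form, carried out case by case from the classification in Table \ref{table:GHLC} and culminating in the explicit generators $P_k$, $\iota(Q_k)$, $dl(R_k)$ and their relations (Theorem \ref{thm:B}, Section \ref{sec:example}).

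The genuine gap is that the decisive step --- generation of ${\mathbb{D}}_{\subgpC G}(\gpC X)$ by the relevant pair of subalgebras, as opposed to mere separation of eigenvalue data on $\Disc(\subgpcpt G/\subgpcpt H)$ --- is not proved in your proposal but only announced as a program. You rightly note that a point-separating subalgebra of a polynomial ring need not be the whole ring, and you then shift the entire burden to an unestablished symbol-level claim, namely that the principal symbols of ${\mathcal{P}}\cup{\mathcal{R}}$, ${\mathcal{P}}\cup{\mathcal{Q}}$, resp.\ ${\mathcal{Q}}\cup{\mathcal{R}}$ generate ${\mathbb{C}}[T^{\ast}\gpC X]^{\subgpC G}$; this is an invariant-theoretic assertion of essentially the same depth as the theorem itself, and your write-up does not show how the hypotheses (maximality of $\subgpC L$ in (2)--(3), simplicity of $\gpC G$ in (3)) would enter such a computation, even though you yourself single out (3) as the delicate case. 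Moreover the injectivity of ${\mathcal{K}}_1\times{\mathcal{K}}_2$, which you call \lq\lq{immediate}\rq\rq, is not immediate: in the paper it is only ever obtained after computing both arrows explicitly through branching laws (spherical harmonics, Kr{\"a}mer's list), as in Sections \ref{subsec:Hopf}--\ref{subsec:4.4}, and that explicit, classification-based computation --- the \lq\lq{alternative}\rq\rq\ you mention in your last sentence --- is precisely the route taken in \cite{KKinv} and precisely the work your proposal leaves undone. So the setup is sound, but the proof of assertions (1)--(3) is missing.
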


It turns out from the classification 
 (see Table \ref{table:GHLC} below)
 that $\gpC X= \gpC G/\gpC H$
 and $\gpC F=\subgpC L/\subgpC H$
 are reductive symmetric spaces
 in most of the cases 
 in Theorem \ref{thm:A}. 
In \cite{KKinv}, 
 we find explicitly generators
 $P_k$, $\iota(Q_k)$, 
 and $d l(R_k)$
 of ${\mathcal{P}}$, ${\mathcal{Q}}$, and ${\mathcal{R}}$, 
 respectively,
 in the following theorem 
 together with their relations.

\begin{theorem}
\label{thm:B}
Assume that $\gpC X$
 is $\subgpC G$-spherical in Setting \ref{set:cpxFXY}.  
\begin{enumerate}
\item[{\rm{(1)}}]
There exist elements $P_k$ of ${\mathbb{D}}_{\gpC G}(\gpC X)$, 
 and elements $R_k$ of ${\mathfrak{Z}}(\subgpC {\mathfrak {g}})$
 such that
\[
  {\mathbb{D}}_{\subgpC G}(\gpC X)
 =\mathbb{C}[P_1, \cdots, P_m, d l(R_1), \cdots, d l (R_n)]
\]
 is a polynomial ring
 in the $P_k$ and $d l(R_k)$.  
\item[{\rm{(2)}}]
Assume further that $\gpC G$ is simple.  
We take $\subgpC L$ to be a maximal reductive subgroup
 of $\subgpC G$ containing $\subgpC H$.  
Then there exist 
 elements $Q_k$ of ${\mathbb{D}}_{\subgpC L}(\gpC F)$,
 and integers $s$, $t \in {\mathbb{N}}$
 with $m+n = s+t$ such that
\begin{align*}
{\mathbb{D}}_{\subgpC G}(\gpC X)
=&\mathbb{C}[P_1, \cdots, P_m, \iota(Q_1), \cdots, \iota(Q_n)]
\\
=&\mathbb{C}[\iota(Q_1), \cdots, \iota(Q_s), d l(R_1), \cdots, d l(R_t)]
\end{align*}
is a polynomial ring 
 in the $P_k$ and $\iota(Q_l)$, 
 and in the $\iota(Q_k)$ and $d l(R_l)$, 
respectively.  
\end{enumerate}
\end{theorem}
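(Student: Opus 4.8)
The plan is to derive Theorem~\ref{thm:B} from Theorem~\ref{thm:A} together with the polynomiality of ${\mathbb{D}}_{\subgpC G}(\gpC X)$ (Definition-Theorem~\ref{defthm:sp}\,(iv)), by means of a general lemma on graded polynomial algebras. First I would pass to the associated graded ring: equip ${\mathbb{D}}_{\subgpC G}(\gpC X)$ with the filtration by order of differential operators; since $\gpC X$ is $\subgpC G$-spherical, the symbol map identifies the associated graded with the ring of $\subgpC G$-invariant regular functions on $T^{*}\gpC X$, which by Knop \cite{kno94} is a finitely generated nonnegatively graded polynomial algebra $S=\bigoplus_{d\ge0}S_d$ with $S_0={\mathbb{C}}$. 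The subalgebras ${\mathcal{P}}$, ${\mathcal{Q}}$, ${\mathcal{R}}$ are filtered subalgebras --- for ${\mathcal{R}}=d l({\mathfrak{Z}}(\subgpC{\mathfrak{g}}))$ and for ${\mathcal{Q}}=\iota({\mathbb{D}}_{\subgpC L}(\gpC F))$ one checks that $d l$ and $\iota$ do not raise the order --- so their symbol subalgebras $\operatorname{gr}{\mathcal{P}}$, $\operatorname{gr}{\mathcal{Q}}$, $\operatorname{gr}{\mathcal{R}}$ are graded subalgebras of $S$, and, since taking $\operatorname{gr}$ preserves the property of generating, Theorem~\ref{thm:A} shows that $S$ is generated by each of the pairs $(\operatorname{gr}{\mathcal{P}},\operatorname{gr}{\mathcal{R}})$, $(\operatorname{gr}{\mathcal{P}},\operatorname{gr}{\mathcal{Q}})$, $(\operatorname{gr}{\mathcal{Q}},\operatorname{gr}{\mathcal{R}})$.

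The lemma I would use is the following. Let $S$ be a finitely generated nonnegatively graded polynomial ${\mathbb{C}}$-algebra with $S_0={\mathbb{C}}$, and let ${\mathcal{A}},{\mathcal{B}}$ be graded subalgebras generating $S$. Put $S_+=\bigoplus_{d\ge1}S_d$; then $S_+/S_+^2$ is finite-dimensional, and by graded Nakayama a homogeneous subset of $S_+$ is a free polynomial generating set of $S$ exactly when its image is a basis of $S_+/S_+^2$. Since ${\mathcal{A}}\cup{\mathcal{B}}$ generates $S$ as an algebra, $S_+$ is generated as an ideal by ${\mathcal{A}}_+\cup{\mathcal{B}}_+$, so the images of ${\mathcal{A}}_+$ and ${\mathcal{B}}_+$ span $S_+/S_+^2$; any homogeneous basis of $S_+/S_+^2$ with representatives taken from ${\mathcal{A}}_+\cup{\mathcal{B}}_+$ thus yields a free generating set of $S$ split between ${\mathcal{A}}$ and ${\mathcal{B}}$. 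Moreover one may prescribe that the ${\mathcal{A}}$-part contain a given free homogeneous generating set of ${\mathcal{A}}$, provided only that such a set remains linearly independent in $S_+/S_+^2$.

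Applying this, I would fix once and for all a free homogeneous generating set $\overline P_1,\dots,\overline P_m$ of $\operatorname{gr}{\mathcal{P}}$ --- a polynomial ring, because $\gpC X$ is $\gpC G$-spherical as well (a Borel subgroup of $\subgpC G$, being solvable, lies in a Borel subgroup of $\gpC G$, so a $\subgpC G$-Borel open orbit forces a $\gpC G$-Borel open orbit) --- and complete it, via the lemma, by representatives drawn from $\operatorname{gr}{\mathcal{R}}_+$ and, alternatively, from $\operatorname{gr}{\mathcal{Q}}_+$; a third application of the lemma splits a basis of $S_+/S_+^2$ between $\operatorname{gr}{\mathcal{Q}}_+$ and $\operatorname{gr}{\mathcal{R}}_+$. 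Lifting each homogeneous generator back into the corresponding filtered subalgebra ${\mathcal{P}}$, ${\mathcal{R}}$, or ${\mathcal{Q}}$, with a preimage of the same order, one obtains elements that still generate ${\mathbb{D}}_{\subgpC G}(\gpC X)$ and are algebraically independent --- the leading symbol of the value at them of any nonzero polynomial is the corresponding leading form evaluated at their symbols, which are algebraically independent in $S$, hence nonzero. This gives the three presentations of ${\mathbb{D}}_{\subgpC G}(\gpC X)$, with the same $P_k$ in part (1) and in the first identity of part (2); and $m+n$ and $s+t$ both equal the number of elements of a free generating set (the Krull dimension of ${\mathbb{D}}_{\subgpC G}(\gpC X)$), so $m+n=s+t$ is automatic.

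The step I expect to be the main obstacle is the proviso at the end of the lemma, applied to ${\mathcal{P}}$: that a free generating set of $\operatorname{gr}{\mathcal{P}}$ stays independent modulo $S_+^2$, i.e.\ extends to a homogeneous coordinate system on $S$ --- this is what lets the $P_k$ be chosen uniformly in parts (1) and (2). For an arbitrary subalgebra of a polynomial ring this fails (a generator of a polynomial subalgebra need not be part of any coordinate system), so it must come from finer structure: concretely, from the freeness of ${\mathbb{D}}_{\subgpC G}(\gpC X)$ as a module over ${\mathcal{P}}$, which is the structural input worked out in \cite{KKinv}; or, case by case along the classification in Table~\ref{table:GHLC}, from the fact that $\gpC X=\gpC G/\gpC H$ and $\gpC F=\subgpC L/\subgpC H$ are reductive symmetric spaces, for which the Harish-Chandra isomorphism exhibits $\operatorname{gr}{\mathcal{P}}$ (and $\operatorname{gr}{\mathcal{Q}}$) as honest coordinate subalgebras.
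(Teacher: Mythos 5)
Your reduction of Theorem \ref{thm:B} to Theorem \ref{thm:A} breaks at its very first step: passing to the associated graded does \emph{not} preserve generation by subalgebras. If two filtered subalgebras $\mathcal A,\mathcal B$ of a filtered algebra $D$ generate $D$, it does not follow that $\operatorname{gr}\mathcal A$ and $\operatorname{gr}\mathcal B$ generate $\operatorname{gr}D$, because top-order terms may cancel when an element of $D$ is written as a polynomial in elements of $\mathcal A\cup\mathcal B$. A minimal example: $\mathcal A={\mathbb{C}}[x^{2}+x]$ and $\mathcal B={\mathbb{C}}[x^{2}]$ generate $D={\mathbb{C}}[x]$ (filtered by degree), yet both symbol algebras equal ${\mathbb{C}}[x^{2}]$, which does not generate $\operatorname{gr}D={\mathbb{C}}[x]$. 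The valid implication runs in the opposite direction (generation on the graded level descends to the filtered level), so Theorem \ref{thm:A}, a statement about the filtered algebra ${\mathbb{D}}_{\subgpC G}(\gpC X)$, gives you no control of $S=\operatorname{gr}{\mathbb{D}}_{\subgpC G}(\gpC X)$, and the graded Nakayama argument never gets started. (A secondary point: the identification of $S$ with ${\mathbb{C}}[T^{*}\gpC X]^{\subgpC G}$, i.e.\ surjectivity of the symbol map onto invariants, is itself not formal and would need justification.)

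The proviso you flag at the end is indeed a second essential gap, and neither remedy you suggest closes it: freeness of ${\mathbb{D}}_{\subgpC G}(\gpC X)$ as a module over ${\mathcal P}$ would not force generators of $\operatorname{gr}{\mathcal P}$ to stay linearly independent modulo $S_{+}^{2}$ (again ${\mathbb{C}}[x]$ is free over ${\mathbb{C}}[x^{2}]$, yet $x^{2}\in S_{+}^{2}$), and such freeness is nowhere established in this paper; the appeal to the classification is exactly the nontrivial work being deferred. In fact the paper does not obtain Theorem \ref{thm:B} from Theorem \ref{thm:A} by abstract filtered/graded algebra at all: both theorems are proved in \cite{KKinv}, and the stated key ingredient is the explicit determination, case by case along Table \ref{table:GHLC}, of the map \eqref{eqn:K}, i.e.\ the double fibration ${\mathcal K}_{1}\times{\mathcal K}_{2}$ on $\Disc(\subgpcpt G/\subgpcpt H)$ for the compact real form (the compact case of Theorem \ref{thm:20160506}, cf.\ Lemma \ref{lem:invMF}); the generators $P_{k}$, $Q_{k}$, $R_{k}$, their algebraic independence, and their relations (as in \eqref{eqn:Hopf} and \eqref{eqn:PQR2}) are read off from the resulting explicit joint spectra. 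So the ``finer structure'' your argument needs is precisely the content of the theorem, not an input you can borrow.
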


The key ingredient 
 of the proof for Theorems \ref{thm:A} and \ref{thm:B}
 is to provide explicitly the map
\begin{equation}
\label{eqn:K}
   \widehat {\subgpcpt{G}} \supset \Disc(\subgpcpt G/\subgpcpt H) 
 \overset {\mathcal{K}_1 \times \mathcal{K}_2} {-{\!\!\!}-{\!\!\!\!\!}\longrightarrow}
   \widehat {\gpcpt G} \times \widehat {\subgpcpt L}, 
\end{equation}
which is a special case of Theorem \ref{thm:20160506}
 for the {\it{compact}} real form
 $\gpcpt X = \subgpcpt G/\subgpcpt H \simeq \gpcpt G/\gpcpt H$
 as below.  

\begin{lemma}
\label{lem:invMF}
Suppose 
that we are in Setting \ref{set:cptFXY}.  
We take a subgroup $\subgpcpt L$ of $\subgpcpt G$ containing $\subgpcpt H$.  
Assume that $\gpC X:=\subgpC G/\subgpC H$
 is $\subgpC G$-spherical.  
\begin{enumerate}
\item[{\rm{(1)}}]
$\gpC X$ is $\gpC G$-spherical.  
\item[{\rm{(2)}}]
$\gpC F:=\subgpC L/\subgpC H$
 is $\subgpC L$-spherical.  
\item[{\rm{(3)}}]
There is a canonical map
\[
   {\mathcal{K}}_1 \times {\mathcal{K}}_2 
   :\operatorname{Disc}(\subgpcpt G/\subgpcpt H)
  \to 
   \operatorname{Disc}(\gpcpt G/\gpcpt H) \times \operatorname{Disc}(\subgpcpt L/\subgpcpt H), 
\qquad
   \vartheta \mapsto ({\mathcal{K}}_1(\vartheta), {\mathcal{K}}_ 2 (\vartheta))
\]
characterized by
\[
[{\mathcal{K}}_1(\vartheta)|_{\subgpcpt G}:\vartheta]=1
\quad
\text{and}
\quad
[\vartheta|_{\gpcpt L}:{\mathcal{K}}_2(\vartheta)]=1.  
\]
\end{enumerate}
\end{lemma}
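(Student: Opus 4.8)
The plan is to deduce all three parts of Lemma~\ref{lem:invMF} from the algebraic characterization of sphericity in Definition-Theorem~\ref{defthm:sp} (the equivalence of geometric sphericity with commutativity/polynomiality of the invariant differential operator ring, and with the multiplicity-one property for the compact real form), together with the elementary properties of the inclusion $\subgpC G/\subgpC H \overset{\sim}{\to} \gpC G/\gpC H$ furnished by Lemma~\ref{lem:1.1}. The key point driving everything is that the hidden-symmetry hypothesis $\subgpC{\mathfrak g}+\gpC{\mathfrak h}=\gpC{\mathfrak g}$ identifies $\gpC X$ simultaneously as $\subgpC G/\subgpC H$ and as $\gpC G/\gpC H$, so the larger group $\gpC G$ acts on the \emph{same} variety. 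Once this is in hand, all the sphericity statements become statements about Borel orbits on a fixed variety.

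For part~(1): a Borel subgroup $\gpC B'$ of $\subgpC G$ is contained in some Borel subgroup $\gpC B$ of $\gpC G$ (enlarge $\gpC B'$ using the inclusion $\subgpC G\hookrightarrow\gpC G$, or equivalently pick a Borel of $\gpC G$ meeting $\subgpC G$ in $\gpC B'$). If $\gpC B'$ has an open orbit on $\gpC X$ — which is exactly $\subgpC G$-sphericity via Definition-Theorem~\ref{defthm:sp}(i) — then a fortiori the larger group $\gpC B$ has an open orbit on $\gpC X$, since $\gpC B'\cdot x \subset \gpC B\cdot x$ and an open subset is still open. Hence $\gpC X=\gpC G/\gpC H$ is $\gpC G$-spherical. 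I would phrase this cleanly by choosing a Borel $\gpC B'_U$-compatible setup, but the one-line containment argument is the substance.

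For part~(2): I would restrict to the fiber. The subgroup $\subgpC L$ acts on $\gpC F=\subgpC L/\subgpC H$, and $\gpC F$ sits inside $\gpC X$ as a $\subgpC L$-orbit (the fiber over the base point of $\gpC Y=\subgpC G/\subgpC L$). A Borel $\gpC B'_L$ of $\subgpC L$ extends to a Borel $\gpC B'$ of $\subgpC G$ (again by the standard fact that any Borel of a reductive subgroup is contained in a Borel of the ambient group). Since $\gpC X$ is $\subgpC G$-spherical, $\gpC B'$ has an open orbit $\mathcal O$ on $\gpC X$; the intersection $\mathcal O\cap\gpC F$ is then open in $\gpC F$ and, because the unipotent radical of $\gpC B'$ acts on $\gpC F$ through $\gpC B'_L$ (one needs here that $\gpC B'$ can be chosen so that $\gpC B'\cap\subgpC L=\gpC B'_L$, which holds by choosing $\gpC B'$ adapted to the parabolic-type structure relating $\subgpC L$ and $\subgpC G$), the orbit $\gpC B'_L\cdot x$ for $x$ in this intersection is open in $\gpC F$. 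Thus $\gpC F$ is $\subgpC L$-spherical. The delicate bookkeeping here — arranging the Borel subgroups of $\subgpC L$ and $\subgpC G$ compatibly so that the open $\gpC B'$-orbit actually meets $\gpC F$ transversally in a $\gpC B'_L$-orbit — is the step I expect to require the most care; alternatively one can argue via Definition-Theorem~\ref{defthm:sp}(iv), using Theorem~\ref{thm:A} to see that $\iota({\mathbb D}_{\subgpC L}(\gpC F))$ is a subalgebra of the polynomial (hence commutative) ring ${\mathbb D}_{\subgpC G}(\gpC X)$, so ${\mathbb D}_{\subgpC L}(\gpC F)$ is commutative, whence $\gpC F$ is $\subgpC L$-spherical — but since Theorem~\ref{thm:A} is cited to \cite{KKinv} and this lemma is meant to feed its proof, the geometric Borel-orbit argument is the one to use to avoid circularity.

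For part~(3): I would specialize Theorem~\ref{thm:20160506} to the compact real form $(\subgpcpt G,\subgpcpt L,\subgpcpt H)$ of Setting~\ref{set:cptFXY}. By Example~\ref{ex:1.4}(1) every homogeneous space of a compact group is real spherical, so hypothesis ``$\gpR X$ real spherical'' in Theorem~\ref{thm:20160506} is automatic. The multiplicity-free hypothesis in Theorem~\ref{thm:20160506}(3) is precisely Definition-Theorem~\ref{defthm:sp}(v) applied to the $\subgpC G$-spherical $\gpC X$: $\dim\Hom_{\subgpcpt G}(\vartheta,C^\infty(\subgpcpt G/\subgpcpt H))\le 1$ for all $\vartheta\in\widehat{\subgpcpt G}$, which since $\subgpcpt G$ is compact is the same as $\dim\invHom{\subgpcpt G}{\vartheta}{L^2(\gpcpt X)}\le 1$. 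Therefore Theorem~\ref{thm:20160506}(3) applies and produces exactly the double fibration ${\mathcal K}_1\times{\mathcal K}_2:\Disc(\subgpcpt G/\subgpcpt H)\to\Disc(\gpcpt G/\gpcpt H)\times\Disc(\subgpcpt L/\subgpcpt H)$; in the compact setting $\Disc(\subgpcpt G/\subgpcpt H)$ is just the Peter--Weyl spectrum of $\gpcpt X$, and the fiber descriptions ${\mathcal K}_1^{-1}(\pi)=\Disc(\pi|_{\subgpcpt G})$, ${\mathcal K}_2^{-1}(\tau)=\Disc(\subgpcpt G/\subgpcpt L,\tau)$ from Theorem~\ref{thm:20160506}(3) give the characterization $[{\mathcal K}_1(\vartheta)|_{\subgpcpt G}:\vartheta]=1$ and $[\vartheta|_{\gpcpt L}:{\mathcal K}_2(\vartheta)]=1$ after invoking parts~(1) and~(2) to know that $\gpcpt X=\gpcpt G/\gpcpt H$ is $\gpcpt G$-spherical and $\gpcpt F=\subgpcpt L/\subgpcpt H$ is $\subgpcpt L$-spherical (so that the relevant branching multiplicities, which by Frobenius reciprocity and Definition-Theorem~\ref{defthm:sp}(v) for those spaces are at most one, are forced to equal one on the support of the respective fibrations). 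This last step is essentially bookkeeping once parts~(1)--(2) are established, so the real content of the lemma is concentrated in part~(2).
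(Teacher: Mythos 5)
Parts (1) and (3) of your proposal are sound, and (3) follows exactly the route the paper intends: the paper states the lemma as the specialization of Theorem~\ref{thm:20160506} to the compact real form (deferring details to \cite{KKinv}), where real sphericity is automatic by Example~\ref{ex:1.4}(1), the multiplicity-free hypothesis comes from Definition-Theorem~\ref{defthm:sp}(v), and the fiber descriptions translate into the stated multiplicity-one characterization by Frobenius reciprocity. In (1), just add the standard remark that a $\gpC B$-orbit containing a nonempty open subset of the irreducible variety $\gpC X$ is itself open (orbits are locally closed); ``an open subset is still open'' is not by itself a proof.

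The genuine gap is your geometric argument for (2). First, the open $\gpC B'$-orbit $\mathcal O$ in $\gpC X$ is only dense, so it need not meet the single fiber $\gpC F=\subgpC L/\subgpC H$, which is a closed subvariety of positive codimension; nothing forces $\mathcal O\cap \gpC F\neq\emptyset$. Second, even at a point $x\in\mathcal O\cap\gpC F$, openness of $\gpC B'x$ in $\gpC X$ does not yield openness of $(\gpC B'\cap\subgpC L)x$ in $\gpC F$: tangent directions along $\gpC F$ may be produced by elements of ${\mathfrak b}'$ outside ${\mathfrak l}'$, and the justification that ``the unipotent radical of $\gpC B'$ acts on $\gpC F$ through $\gpC B'_L$'' is not correct, since elements of $\gpC B'$ outside $\subgpC L$ do not even preserve $\gpC F$. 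Moreover $\subgpC L$ is a maximal \emph{reductive} subgroup, generally not a Levi of a parabolic (e.g.\ $Spin(8,{\mathbb{C}})\subset Spin(9,{\mathbb{C}})$ in case (vi)), so there is no ``parabolic-type structure'' to adapt $\gpC B'$ to. A repair that stays inside the paper's toolbox and avoids the circularity you rightly worry about: use Definition-Theorem~\ref{defthm:sp}(v). If $\gpC F$ were not $\subgpC L$-spherical, there would exist $\sigma\in\dual{\subgpcpt L}$ with $\dim\sigma^{\subgpcpt H}\ge 2$; choose $\vartheta\in\dual{\subgpcpt G}$ with $[\vartheta|_{\subgpcpt L}:\sigma]\ge 1$ (possible since $\operatorname{Ind}_{\subgpcpt L}^{\subgpcpt G}\sigma\neq 0$), and then $\dim\vartheta^{\subgpcpt H}\ge[\vartheta|_{\subgpcpt L}:\sigma]\cdot\dim\sigma^{\subgpcpt H}\ge 2$, contradicting the $\subgpC G$-sphericity of $\gpC X$ via (v). The same counting argument also gives (1) directly, and in (3) it shows the relevant multiplicities are automatically equal to one, so (3) does not in fact depend on (1)--(2).
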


Theorems \ref{thm:A} and \ref{thm:B} apply 
 to analysis of real forms 
 $\gpR X= \gpR G / \gpR H =\subgpR G/\subgpR H$
 in Table \ref{table:GHL}.  
For this, 
we set up some notation.  
 Suppose that $\gpR G$ is a real form of $\gpC G$
 which leaves a real form $\gpR X$ of $\gpC X$ invariant.  
Then $\gpC G$-invariant holomorphic differential operators on $\gpC X$
 induce $\gpR G$-invariant real analytic differential operators on $\gpR X$
 by restriction.  
Let $\chi_{\lambda}^{\gpC X}
 \in \invHom{{\mathbb{C}}\operatorname{-alg}}{{\mathbb{D}}_{\gpC G}(\gpC X)}{{\mathbb{C}}}$.  
For ${\mathcal{F}}=C^{\infty}$, 
 $L^2$, or ${\mathcal{D}}'$, 
 we define the space of joint eigenfunctions by 
\[
  {\mathcal{F}}(\gpR X;{\mathcal{M}}_{\lambda})
  :=
  \{f \in {\mathcal{F}}(\gpR X)
   :
   D f = \chi_{\lambda}^{\gpC X}(D)f
   \quad
   \text{for any }
   D \in {{\mathbb{D}}_{\gpC G}(\gpC X)} \}, 
\]
where solutions are understood
 in the weak sense 
 for ${\mathcal{F}}= L^2$ or ${\mathcal{D}}'$.  
Then ${\mathcal{F}}(\gpR X;{\mathcal{M}}_{\lambda})$
 are $\gpR G$-submodules
 of regular representations
 of $\gpR G$
 on ${\mathcal{F}}(\gpR X)$.

We denote by $\dual {(\subgpC L)}_{\subgpC H}$
 the set of equivalence classes
 of irreducible finite-dimensional holomorphic representations
 of $\subgpC L$
 with nonzero $\subgpC H$-fixed vectors.  
By Weyl's unitary trick,
 there is a natural bijection
\begin{equation}
\label{eqn:Utrick}
   \dual {(\subgpC L)}_{\subgpC H}
  \overset \sim \to 
   \Disc(\subgpcpt L/\subgpcpt H)
\end{equation}
if $\gpcpt F = \subgpcpt L / \subgpcpt H$
 is a real form of $\subgpC F = \subgpC L / \subgpC H$
 and if both $\subgpC L$ and $\subgpC H$ are connected.

\begin{theorem}
\label{thm:C}
Suppose that we are in Setting \ref{set:realFXY}
 with $\gpC G$ simple
 and with $\subgpC G$ and $\gpC H$ maximal reductive subgroups.  
Assume that the complexification $\gpC X$ of $\gpR X$
is $\subgpC G$-spherical.  
Let $\subgpC L$ be a maximal complex reductive subgroup
 of $\subgpC G$ containing $\subgpC H$.  
\begin{enumerate}
\item[{\rm{(1)}}]
${\mathfrak{Z}}(\subgpC {{\mathfrak{l}}}) \to {\mathbb{D}}_{\subgpC L}(\gpC F)$
 is surjective.  
\item[{\rm{(2)}}]
There exists a natural map
 for every $\tau \in \widehat {(\subgpC L)}_{\subgpC H}$, 
\begin{equation}
\label{eqn:nutau}
\nu_{\tau}:
\Hom_{{\mathbb{C}}\operatorname{-alg}}
 ({\mathbb{D}}_{\gpC G}({\gpC X}),{\mathbb{C}})
  \to 
  \Hom_{{\mathbb{C}}\operatorname{-alg}}
  ({\mathfrak{Z}}(\subgpC {\mathfrak{g}}), {\mathbb{C}})
\end{equation}
with the following property:
 in \eqref{eqn:K} if $\pi \in \Disc(\gpcpt G/\gpcpt H)$ is realized
 in $L^2(\gpcpt G/\gpcpt H;{\mathcal{M}}_{\lambda})$, 
 then the infinitesimal character
 of any $\vartheta \in \dual{\subgpcpt G}$ belonging
 to ${\mathcal{K}}_1^{-1}(\pi) \cap {\mathcal{K}}_2^{-1}(\tau)$
 is given by $\nu_{\tau}(\lambda)$.  
\item[{\rm{(3)}}]
Suppose that a quadruple 
 $\gpR H \subset \gpR G \supset \subgpR G \supset \subgpR H$
 is given as real forms 
 of $\gpC H \subset \gpC G \supset \subgpC G \supset \subgpC H$
 such that $\gpR F = \subgpR L /\subgpR H$ is compact
 and that $\underline{\Disc}(\subgpR G/\subgpR H)$ is multiplicity-free.  
Then, 
 for any $\tau \in \Disc (\subgpR L/\subgpR H) \simeq \dual{(\subgpC L)}_{\subgpC H}$
 and for $\pi \in \Disc(\gpR G/\gpR H)$
 realized in $L^2(\gpR G/\gpR H;{\mathcal{M}}_{\lambda})$, 
 the infinitesimal character
 of any $\vartheta$ belonging to ${\mathcal{K}}_1^{-1}(\pi) \cap {\mathcal{K}}_2^{-1}(\tau)$
 is given by $\nu_{\tau}(\lambda)$.  
\end{enumerate}
\end{theorem}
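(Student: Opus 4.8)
The plan is to prove the three assertions in turn: (1) is a structural input about the fiber, (2) is the heart and rests on the polynomial presentation of ${\mathbb{D}}_{\subgpC G}(\gpC X)$ in Theorem~\ref{thm:B}, and (3) is a transfer of (2) to noncompact real forms, exploiting that infinitesimal characters are dictated by the complexifications alone. For~(1): by Lemma~\ref{lem:invMF}(2) the fiber $\gpC F=\subgpC L/\subgpC H$ is $\subgpC L$-spherical, so ${\mathbb{D}}_{\subgpC L}(\gpC F)$ is a polynomial ring by Definition-Theorem~\ref{defthm:sp}(iv). Under the present hypotheses ($\gpC G$ simple, $\subgpC G$ and $\gpC H$ maximal), the classification behind Table~\ref{table:GHLC} shows that $\gpC F$ is a reductive symmetric space in all but finitely many cases. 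For a complex reductive symmetric space, composing ${\mathfrak{Z}}(\subgpC{\mathfrak{l}})\to{\mathbb{D}}_{\subgpC L}(\gpC F)$ with the Harish-Chandra isomorphisms \eqref{eqn:DGX} and \eqref{eqn:Zg} turns it into the restriction $S(\mathfrak{j})^{W(\subgpC{\mathfrak{l}})}\to S(\mathfrak{a})^{W}$ from a Cartan subalgebra $\mathfrak{j}$ of $\subgpC{\mathfrak{l}}$ to a maximal semisimple abelian subspace $\mathfrak{a}\subset\mathfrak{j}$ of the $(-1)$-eigenspace of the defining involution, which is surjective; the remaining nonsymmetric $\gpC F$ are treated one by one.

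For~(2): fix $\tau\in\dual{(\subgpC L)}_{\subgpC H}$. It determines a character $\chi_\tau^{\gpC F}\in\Hom_{{\mathbb{C}}\operatorname{-alg}}({\mathbb{D}}_{\subgpC L}(\gpC F),{\mathbb{C}})$, namely the scalar by which ${\mathbb{D}}_{\subgpC L}(\gpC F)$ acts on the line $\tau^{\subgpC H}$, and by~(1) this equals the infinitesimal character of $\tau$ composed with ${\mathfrak{Z}}(\subgpC{\mathfrak{l}})\twoheadrightarrow{\mathbb{D}}_{\subgpC L}(\gpC F)$. Choosing $\subgpC L$ maximal, Theorem~\ref{thm:B}(2) presents ${\mathbb{D}}_{\subgpC G}(\gpC X)$ as the polynomial ring on generators $P_1,\dots,P_m$ of ${\mathcal{P}}={\mathbb{D}}_{\gpC G}(\gpC X)$ and $\iota(Q_1),\dots,\iota(Q_n)$ of ${\mathcal{Q}}=\iota({\mathbb{D}}_{\subgpC L}(\gpC F))$, so any character of ${\mathcal{P}}$ together with the character of ${\mathcal{Q}}$ coming from $\chi_\tau^{\gpC F}$ extends uniquely to a character $\widetilde{\chi}$ of ${\mathbb{D}}_{\subgpC G}(\gpC X)$. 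I would then define $\nu_\tau$ to send $\chi\in\Hom_{{\mathbb{C}}\operatorname{-alg}}({\mathbb{D}}_{\gpC G}(\gpC X),{\mathbb{C}})$ to the composite ${\mathfrak{Z}}(\subgpC{\mathfrak{g}})\xrightarrow{d l}{\mathcal{R}}\hookrightarrow{\mathbb{D}}_{\subgpC G}(\gpC X)\xrightarrow{\widetilde{\chi}}{\mathbb{C}}$, with ${\mathcal{R}}=d l({\mathfrak{Z}}(\subgpC{\mathfrak{g}}))$ as in \eqref{eqn:R}; this is visibly canonical, and $\nu_\tau(\lambda)$ means $\nu_\tau(\chi_\lambda^{\gpC X})$. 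To check the property, let $\vartheta\in{\mathcal{K}}_1^{-1}(\pi)\cap{\mathcal{K}}_2^{-1}(\tau)$; it sits in $L^2(\gpcpt X)$ with one-dimensional $\subgpcpt H$-fixed space (Definition-Theorem~\ref{defthm:sp}(v) applied to $\gpC X$), spanned by a spherical function $\phi$ whose value at the base point is nonzero (being a matrix coefficient of a unit vector with itself), and ${\mathbb{D}}_{\subgpC G}(\gpC X)$ acts on $\phi$ through a character $\Lambda$. Since $\vartheta\subset\pi|_{\subgpcpt G}$ and $\pi\subset L^2(\gpcpt G/\gpcpt H;{\mathcal{M}}_\lambda)$, the $\gpC G$-invariant operators ${\mathcal{P}}$ act on $\vartheta$ by $\chi_\lambda^{\gpC X}$, so $\Lambda|_{{\mathcal{P}}}=\chi_\lambda^{\gpC X}$; restricting $\phi$ to $\gpC F=\subgpcpt L/\subgpcpt H\hookrightarrow\gpC X$ and applying \eqref{eqn:iotaF} with $g=e$ gives $(\iota(Q)\phi)|_{\gpC F}=Q(\phi|_{\gpC F})$, and one-dimensionality of $\vartheta^{\subgpcpt H}$ forces $\vartheta|_{\subgpcpt L}$ to have a unique $\subgpcpt H$-spherical constituent, which must be $\tau={\mathcal{K}}_2(\vartheta)$, whence $\phi\in\tau^{\subgpcpt H}$ and $\phi|_{\gpC F}$ is a nonzero $\subgpcpt H$-invariant vector generating a copy of $\tau$ in $C^\infty(\gpC F)$, hence a $\chi_\tau^{\gpC F}$-eigenvector of ${\mathbb{D}}_{\subgpC L}(\gpC F)$. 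Thus $\Lambda|_{{\mathcal{Q}}}$ is the character induced by $\chi_\tau^{\gpC F}$, so $\Lambda=\widetilde{\chi}$ by uniqueness, and the infinitesimal character of $\vartheta$, being $z\mapsto\Lambda(d l(z))$, equals $\nu_\tau(\lambda)$.

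For~(3): the map $\nu_\tau$ constructed above depends only on the complexifications and on the bijection \eqref{eqn:Utrick}, so it is unchanged. Given the real forms with $\gpR F=\subgpR L/\subgpR H$ compact, $\gpR X$ is real spherical (Example~\ref{ex:1.4}(3)) and $\underline{\Disc}(\subgpR G/\subgpR H)$ is multiplicity-free by hypothesis, so Theorem~\ref{thm:20160506}(3) provides the double fibration ${\mathcal{K}}_1\times{\mathcal{K}}_2$ over $\gpR X$. For $\vartheta\in{\mathcal{K}}_1^{-1}(\pi)\cap{\mathcal{K}}_2^{-1}(\tau)$ realized in $L^2(\gpR X)$, the argument of~(2) runs verbatim with $\gpcpt G,\subgpcpt G,\subgpcpt L,\subgpcpt H$ replaced by $\gpR G,\subgpR G,\subgpR L,\subgpR H$: the $\gpC G$-invariant operators act on $\pi\supset\vartheta$ by $\chi_\lambda^{\gpC X}$; since $\gpR F$ is compact, $\tau$ is finite-dimensional and is identified via \eqref{eqn:Utrick} with an element of $\dual{(\subgpC L)}_{\subgpC H}$, and the fiberwise operators $\iota(Q)$ act on the $\subgpR H$-fixed line of $\vartheta$ by $\chi_\tau^{\gpC F}$; uniqueness of the extension then gives $\nu_\tau(\lambda)$ as the infinitesimal character.

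The step I expect to be the main obstacle is the fiberwise analysis inside~(2)--(3): showing that the spherical vector of $\vartheta$ does not vanish on the fiber $\gpC F$ and generates \emph{exactly} the ${\mathcal{K}}_2$-partner $\tau$, so that ${\mathcal{Q}}$ acts on $\vartheta$ through the character of $\tau$ alone. This is precisely what forces $\Lambda$ to factor through the pair $(\lambda,\tau)$ and hence makes $\nu_\tau$ do its job; everything else is bookkeeping with the polynomial presentation of Theorem~\ref{thm:B}, while~(1) is a reduction to classical facts about symmetric spaces via the classification.
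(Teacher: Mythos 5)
Your parts (1) and (2) follow essentially the paper's route: (1) is a case-by-case check against the classification in Table \ref{table:GHLC}, and your construction of $\nu_{\tau}$ --- prescribe $\chi_{\lambda}^{\gpC X}$ on ${\mathcal{P}}$ and the character of ${\mathbb{D}}_{\subgpC L}(\gpC F)$ attached to $\tau$ on ${\mathcal{Q}}$, extend to ${\mathbb{D}}_{\subgpC G}(\gpC X)$ via the polynomial presentation of Theorem \ref{thm:B}(2), and restrict to ${\mathcal{R}}=d l({\mathfrak{Z}}(\subgpC{\mathfrak{g}}))$ --- is exactly what the appeal to Theorem \ref{thm:A} in the paper (with details in \cite{KKinv}) amounts to; your verification in the compact setting via the $\subgpcpt H$-spherical vector, its nonvanishing at the base point, and restriction to the fiber is sound. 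One caveat in (1): your blanket assertion that for \emph{every} complex reductive symmetric space the restriction map $S({\mathfrak{j}})^{W(\subgpC{\mathfrak{l}})}\to S({\mathfrak{a}})^{W}$ is surjective is false in general (the map is graded, so it fails whenever the little Weyl group has a fundamental degree not occurring among the degrees of the big one, as for ${\mathfrak{e}}_6/{\mathfrak{f}}_4$); so even in the symmetric cases surjectivity must be read off from the specific fibers $\gpC F$ in Table \ref{table:GHLC}, which is what the paper's classification argument does.

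The genuine gap is in (3). You claim the argument of (2) ``runs verbatim'' for the real forms, and in particular that $\iota(Q)$ acts on ``the $\subgpR H$-fixed line of $\vartheta$.'' But in (3) only $\gpR F=\subgpR L/\subgpR H$ is assumed compact; $\subgpR H$ itself is typically noncompact (e.g.\ $\subgpR H=U(p,q-1)$ in case $(\mathrm{i})_{\mathbb{R}}$), and then an infinite-dimensional $\vartheta\in\Disc(\subgpR G/\subgpR H)$ has no $\subgpR H$-fixed vector in its Hilbert space: the embedding $\vartheta\hookrightarrow L^2(\gpR X)$ comes from an $\subgpR H$-invariant distribution vector, not from a matrix coefficient against an $\subgpR H$-fixed unit vector, so there is no spherical function $\phi$ with $\phi(e)\neq 0$ to restrict to the base fiber, and your mechanism for computing $\Lambda|_{\mathcal{Q}}$ collapses. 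The paper replaces precisely this step: setting $W_{\tau}=L^2(\subgpR L/\subgpR H)[\tau]$, part (1) gives that ${\mathbb{D}}_{\subgpC L}(\gpC F)$ acts on $W_{\tau}$ by scalars, hence $\iota({\mathbb{D}}_{\subgpC L}(\gpC F))$ acts by the character of $\tau$ on the closed $\subgpR G$-invariant subspace $L^2(\subgpR G/\subgpR L,{\mathcal{W}}_{\tau})$ of $L^2(\gpR X)$; and the multiplicity-free hypothesis is used at exactly this point, through \eqref{eqn:MFtheta}, to guarantee $L^2(\gpR X)[\vartheta]\simeq L^2(\subgpR G/\subgpR L,{\mathcal{W}}_{\tau})[\vartheta]$, i.e.\ that the realization of $\vartheta$ actually lies in the subspace where ${\mathcal{Q}}$ acts by $\chi_{\tau}$. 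With that, ${\mathcal{P}}$ acts by $\chi_{\lambda}^{\gpC X}$, ${\mathcal{Q}}$ by $\chi_{\tau}$, and (2) yields the infinitesimal character $\nu_{\tau}(\lambda)$. In your sketch the multiplicity-free assumption is invoked only to produce the double fibration, not at this decisive localization step, so as written part (3) is not established.
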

\begin{remark}
\label{rem:C}
{\rm{
In Section \ref{sec:example}, 
 we illustrate Theorem \ref{thm:C}
 by examples,
 and give explicitly of the map $\nu_{\tau}$
 (see \eqref{eqn:nutau3} and \eqref{eqn:nutau4}), 
 in the case where the rank of the nonsymmetric homogeneous space
 $\gpC X= \subgpC G/\subgpC H$ is 1 and $2n+1$, 
 respectively.  
}}
\end{remark}
\begin{proof}[Sketch of proof of Theorem \ref{thm:C}]
By Lemma \ref{lem:1.1}, 
 $\subgpC G$ acts transitively
 on $\gpC X$.  
Then the first statement follows from the classification 
of the quadruple $(\gpC H, \gpC G, \subgpC G, \subgpC H)$
 (see Table \ref{table:GHLC}), 
 and the second one from Theorem \ref{thm:A}
 (see \cite{KKinv} for details).  
To see the third statement,
 let $W_{\tau}:=L^2(\subgpR L/\subgpR H)[\tau]$ be the $\tau$-component
 (see \eqref{eqn:picomp})
 of the unitary representation of $\subgpR L$
 on $L^2(\subgpR L/\subgpR H)$.  
Since ${\mathfrak{Z}}(\subgpC {\mathfrak {l}})$ surjects 
 onto ${\mathbb{D}}_{\subgpC L}(\gpC F)$, 
 every element of ${\mathbb{D}}_{\subgpC L}(\gpC F)$
 acts on $W_{\tau}$ as scalar.  
In turn, 
 $\iota({\mathbb{D}}_{\subgpC L}(\gpC F))$ acts as scalars
 on $L^2(\subgpR G/\subgpR L, {\mathcal{W}}_{\tau})$, 
 which is a $\subgpR G$-invariant closed subspace of $L^2(\gpR X)$.  
By \eqref{eqn:MFtheta}, 
 and by the assumption
 that $\underline{\operatorname{Disc}} (\subgpR G/\subgpR H)$ is multiplicity-free, 
 $\vartheta$ in 
 ${\mathcal{K}}_1^{-1}(\pi) \cap {\mathcal{K}}_2^{-1}(\tau)$
 is realized on 
 $L^2(\gpR X)[\vartheta] \simeq L^2(\subgpR G/\subgpR H, {\mathcal{W}}_{\tau})[\vartheta]$.  
 Hence the third statement is deduced from (2).  
\end{proof}

\section{Examples of relations
 among invariant differential operators}
\label{sec:example}

In this section,
 we illustrate Theorems \ref{thm:A}, 
 \ref{thm:B}, 
 and \ref{thm:C}
 on invariant differential operators
 with hidden symmetries
 by some few examples.  
We shall carry out
 in \cite{KKinv}
 computations
 thoroughly
 for all the cases based on the classification
 (see Table \ref{table:GHLC})
 of the fibration
\[
  \gpC F=\subgpC L/\subgpC H
  \to 
  \gpC X = \subgpC G/\subgpC H
  \to
  \gpC Y=\subgpC G/\subgpC L
\]
 of $\subgpC G$-spherical $\gpC X$
with hidden symmetry 
 $\gpC G$, 
 where $\subgpC H \subset \subgpC L \subset \subgpC G \subset \gpC G$
 and $\gpC G$ is a complex simple Lie group
 containing $\subgpC G$.  

The following convention will be used
 in Sections \ref{subsec:Hopf} and \ref{subsec:4.4}.

For $\gpC {\mathfrak {g}}={\mathfrak {gl}}(n,{\mathbb{C}})$, 
 we define $R_k \in {\mathfrak{Z}}(\gpC {\mathfrak {g}})$
 such that
\[
  \chi_{\nu}(R_k)= \sum_{j=1}^{n}\nu_j^k
\]
for $\nu=(\nu_1, \cdots, \nu_n) \in {\mathbb{C}}^n/{\mathfrak {S}}_n$, 
 or equivalently $R_k$ acts on the finite-dimensional representation
 $F(\gpC {\mathfrak {g}}, \lambda)$
 with highest weight $\lambda =(\lambda_1, \cdots, \lambda_n) \in {\mathbb{Z}}^n$
 ($\lambda_1 \ge \dots \ge \lambda_n$)
 as the scalar 
 $\sum_{j=1}^n (\lambda_j + \frac 1 2 (n+1-2j))^k$.

\subsection{Hopf bundle:
$\gpcpt X=S^1$ bundle over ${\mathbb{P}}^n{\mathbb{C}}$}
\label{subsec:Hopf}
We begin with the underlying geometry
 of an example in \cite{xk:1}
 to find an explicit branching law
 of the unitarization \cite{Vogan84}
 of certain Zuckerman's derived functor modules
 $A_{\mathfrak {q}}(\lambda)$
 (see Vogan \cite{Vogan81}
 or Vogan--Zuckerman \cite{xvoza} for the definition of $A_{\mathfrak {q}}(\lambda)$)
 with respect to a reductive symmetric pair
\[
  (\gpR G, \subgpR G) =(O(2p,2q), U(p,q)).  
\]
The holomorphic setting is given by $\gpC X=\gpC G/\gpC H \simeq \subgpC G/\subgpC H$
 and $\gpC F=\subgpC L/\subgpC H$
 with
\[
\begin{pmatrix}
\gpC G & \supset & \gpC H
\\
\cup & & \cup
\\
\subgpC G  & \supset  \subgpC L \supset & \subgpC H
\end{pmatrix}
:=
\begin{pmatrix}
SO(2n+2,{\mathbb{C}}) & \supset & SO(2n+1,{\mathbb{C}})
\\
\cup & & \cup
\\
GL(n+1,{\mathbb{C}})  & \supset  GL(n,{\mathbb{C}}) \times GL(1,{\mathbb{C}}) \supset & GL(n,{\mathbb{C}})
\end{pmatrix}.  
\]
In the compact form, 
the fibration $\gpcpt F \to \gpcpt X \to \gpcpt Y$
 amounts to the Hopf fibration
\[
S^1 \to S^{2n+1} \to {\mathbb{P}}^n{\mathbb{C}}.  
\]
In order to explain a noncompact form
 of the Hopf fibration,
 we set
\begin{equation}
\label{eqn:Spq}
S^{p,q}:=\{x \in {\mathbb{R}}^{p+q+1}:
\sum_{j=1}^{p+1} x_j^2 - \sum_{k=1}^{q} y_k^2=1\}
\simeq
O(p+1,q)/O(p,q).  
\end{equation}
The hypersurface $S^{p,q}$ becomes a pseudo-Riemannian manifold
 of signature $(p,q)$
 as a submanifold of ${\mathbb{R}}^{p+q+1}$
 endowed with the flat pseudo-Riemannian metric
\[
  d s^2 = d x_1^2 + \cdots + d x_{p+1}^2 - d y_1^2 - \cdots - d y_q^2
\quad
\text{on }
{\mathbb{R}}^{p+1,q}.  
\]
Then $S^{p,q}$ carries a constant sectional curvature $+1$.  
By switching signature of the metric, 
 $S^{p,q}$ may be regarded also as a pseudo-Riemannian manifold
 of signature $(q,p)$, 
having a constant sectional curvature $-1$
 (see \cite[Chapter 11]{Wolf}).  
We note
 that $S^{1,q}$ is the anti-de Sitter space,
 and $S^{p,1}$ is the de Sitter space.  
Then the noncompact form 
 of the Hopf fibration $\gpR F \to \gpR X \to \gpR Y$
 with $\gpR G =O(2p+2,2q)$ amounts to 
\begin{equation}
\label{eqn:Hopfpq}
   S^1 \to S^{2p+1,2q} \to {\mathbb{P}}^{p,q}{\mathbb{C}}, 
\end{equation}
where we define an open set of ${\mathbb{P}}^{p+q}{\mathbb{C}}$
 by
\begin{equation}
\label{eqn:Ppq}
   {\mathbb{P}}^{p,q}{\mathbb{C}}
   :=
   \{[z:w] 
     \in 
    (({\mathbb{C}}^{p+1} \oplus {\mathbb{C}}^q)\setminus \{0\})/{\mathbb{C}}^{\times}   
:
   |z|^2>|w|^2\}.  
\end{equation}
Then ${\mathbb{P}}^{p,q}{\mathbb{C}}$ carries an indefinite-K{\"a}hler structure
 which is invariant
 by the natural action of $U(p+1,q)$.  
If $p=0$, 
 then $S^{2p+1,2q}=S^{1,2q}$ is the anti-de Sitter space, 
 and ${\mathbb{P}}^{p,q}{\mathbb{C}}={\mathbb{P}}^{0,q}{\mathbb{C}}$
 is the Hermitian unit ball.

We take 
\[
P_2 := d l (C_{D_n}) \in {\mathbb{D}}_{\gpC G}(\gpC X), 
\]
where $C_{D_n} \in {\mathfrak{Z}}(\gpC {\mathfrak {g}})$
 is the Casimir element of $\gpC {\mathfrak {g}}
={\mathfrak {s o}}(2n+2,{\mathbb{C}})$.  
Let $E$ be a generator of the second factor 
 of $\subgpC {\mathfrak {l}}={\mathfrak {g l}}(n,{\mathbb{C}}) 
 \oplus {\mathfrak {g l}}(1,{\mathbb{C}})$
such that the eigenvalues
 of $\operatorname{ad}(E)$ in ${\mathfrak {g l}}(n+1,{\mathbb{C}})$
 are $0$, $\pm 1$.  
We set 
\begin{align*}
Q_1:=& d r (E), \quad
Q_2 := Q_1^2
\in {\mathbb{D}}_{\subgpC L}(\gpC F).  
\intertext{We take $R_1$, $R_2$ $\in {\mathfrak{Z}}(\subgpC {\mathfrak {g}}$)
 for $\subgpC {\mathfrak {g}}={\mathfrak {g l}}(n+1, {\mathbb{C}})$ as }
R_1:=& \sum_{i=1}^{n+1} E_{i i}, 
\\
R_2 :=& \text{the Casimir element (see the convention at the beginning of this section).}  
\intertext{Then the three subalgebras ${\mathcal {P}}$, 
 ${\mathcal {Q}}$, and ${\mathcal{R}}$
 of ${\mathbb{D}}_{\subgpC G}(\gpC X)$
 are polynomial algebras given as}
{\mathcal {P}}=&{\mathbb{D}}_{\gpC G}(\gpC X) = {\mathbb{C}}[P_2], 
\\
{\mathcal {Q}}=&{\mathbb{D}}_{\subgpC L}(\gpC F) = {\mathbb{C}}[\iota(Q_1)], 
\\
{\mathcal {R}}=&d l({\mathfrak{Z}}(\subgpC {\mathfrak {g}}))
={\mathbb{C}}[d l(R_1), d l(R_2)].  
\intertext{The relations among generators
 are given by}
d l(R_1)=& - \iota(Q_1), 
\quad
\text{ and }
\quad
P_2 =2 d l(R_2) - dr (Q_2).  
\end{align*}
Then Theorem \ref{thm:B} in this case is summarized
 by the following three descriptions
 of ${\mathbb{D}}_{\subgpC G}(\gpC X)$
 as polynomial algebras 
 with explicit generators:
\begin{equation}
\label{eqn:Hopf}
{\mathbb{D}}_{\subgpC G}(\gpC X)
={\mathbb{C}}[d l(P_2), \iota(Q_1)]
={\mathbb{C}}[d l(P_2), d r(R_1)]
={\mathbb{C}}[d l(Q_1), d r(R_2)].  
\end{equation}

\subsection{$\gpR X$ as an $S^2$-bundle
 over the quaternionic unit ball}
\label{subsec:Sunada}
In this section,
 we reexamine the example
 in Introduction from Theorem \ref{thm:B}.  
See also \cite{kob09}
 for an exposition 
 on this example from a viewpoint
 of branching laws and spectral analysis.

We begin with the geometric setting.  
Let $\gpR X$ and $\gpR Y$ be a three-dimensional complex manifold
and a quaternionic unit ball defined by 
\begin{align*}
\gpR X:= & {\mathbb{P}}^{1,2}{\mathbb{C}}
       = \{[z_1:z_2:z_3:z_4] \in {\mathbb{P}}^3 {\mathbb{C}}:
         |z_1|^2+|z_2|^2 > |z_3|^2+|z_4|^2\}, 
\\
\gpR Y:=& \{\zeta = x+ i y + j u + k v \in {\mathbb{H}}: 
       x^2 + y ^2 + u^2 + v^2 <1\}.  
\end{align*}
Then $\gpR X$ is homotopic to $S^2$
 by the quaternionic Hopf fibration 
\[
   S^2 \to \gpR X \to \gpR Y, 
\]
according to the following 5-tuple 
 of real reductive groups:  
\[
\begin{pmatrix}
\gpR G & \supset & \gpR H
\\
\cup & & \cup
\\
\subgpR G  & \supset  \subgpR L \supset & \subgpR H
\end{pmatrix}
:=
\begin{pmatrix}
SU(2,2) & \supset & U(1,2)
\\
\cup & & \cup
\\
Sp(1,1)  & \supset  Sp(1) \times Sp(1) \supset & {\mathbb{T}} \times Sp(1)
\end{pmatrix}.  
\]
There is a unique (up to a positive scalar multiplication) pseudo-Riemannian metric $h$
 on $\gpR X$ of signature $(++----)$
 on which $\gpR G$ acts as isometries.  
The manifold $\gpR X$ does not admit
 a $\gpR G$-invariant Riemannian metric
 but a $\subgpR G$-invariant one $g$ induced from 
 $-B(\theta \cdot, \cdot)$
 where $B$ and $\theta$ are the Killing form 
 and a Cartan involution
 of $\subgpR {\mathfrak {g}}={\mathfrak{s p}}(1,1)$, 
 respectively.

Then the ring of $\subgpR G$-invariant differential operators
 on $\gpR X$
 is generated
 by any two of the following three second-order differential operators:
\begin{align*}
\square:&\text{the Laplacian for the $\gpR G$-invariant pseudo-Riemannian metric $h$ on $\gpR X$, }
\\
\Delta: &
\text{the Laplacian for the $\subgpR G$-invariant Riemannian metric $g$
      on $\gpR X$, }
\\
\iota(\Delta_{S^2}): &
\text{the Laplacian on the fiber $S^2$, extended to $\gpR X$.  }
\end{align*}
Thus 
\[
  {\mathbb{D}}_{\subgpC G}(\gpC X)
\simeq 
  {\mathbb{D}}_{\subgpR G}(\gpR X)
=
  {\mathbb{C}}[\square, \Delta]
=
  {\mathbb{C}}[\square, \iota(\Delta_{S^2})]
=
  {\mathbb{C}}[\Delta, \iota(\Delta_{S^2})].  
\]
We note
 that $\square \in {\mathcal{P}}$, 
$\iota(\Delta_{S^2}) \in {\mathcal{Q}}$, 
and $\Delta \in {\mathcal{R}}$
 with the notation
 as in Theorem \ref{thm:A} or \ref{thm:B}.  
These generators satisfy the following linear relation:
\[
   \square = -24 \Delta + 12 \iota(\Delta_{S^2}), 
\]
{\rm{see \cite[(6.3)]{kob09}}}.

\subsection{$(\gpC G, \subgpC G) = (SO(16,{\mathbb{C}}), Spin(9,{\mathbb{C}}))$}

The spin representation defines a prehomogeneous vector space
 $({\mathbb{C}}^{\times} \times Spin(9,{\mathbb{C}}), {\mathbb{C}}
^{16})$,
 where the unique open orbit is given as a homogeneous space
 $({\mathbb{C}}^{\times} \times Spin(9,{\mathbb{C}}))/Spin(7,{\mathbb{C}})$, 
 see Igusa \cite{igu70}.

In this section, 
 we consider $\gpC X = \gpC G/\gpC H
\simeq \subgpC G/\subgpC H$
 and $\gpC F=\subgpC L/\subgpC H$
 defined by
\begin{equation*}
\begin{pmatrix}
\gpC G & \supset & \gpC H
\\
\cup & & \cup
\\
\subgpC G  & \supset  \subgpC L \supset & \subgpC H
\end{pmatrix}
:=
\begin{pmatrix}
SO(16,{\mathbb{C}}) & \supset & SO(15,{\mathbb{C}})
\\
\cup & & \cup
\\
Spin(9,{\mathbb{C}})  & \supset  Spin(8,{\mathbb{C}}) \supset & Spin(7,{\mathbb{C}})
\end{pmatrix}.  
\end{equation*}
In the compact form,
 the fibration $\gpcpt F \to \gpcpt X \to \gpcpt Y$ amounts to 
\[
   S^7 \to S^{15} \to S^8.  
\]
In the noncompact form with $\gpR G =O(8,8)$, 
 the fibration $\gpR F \to \gpR X \to \gpR Y$
 amounts to 
\[
      S^7 \to S^{8,7} \to {\mathbb{H}}^8, 
\]
where ${\mathbb{H}}^8$ is the simply-connected 8-dimensional
 hyperbolic space, from which we deduced the existence
 of compact pseudo-Riemannian manifold of signature $(8,7)$
 of negative constant sectional curvature 
in \cite{kob97},
 see also \cite{ky05} for a detailed proof
 by utilizing the Clifford algebra over ${\mathbb{R}}$.

Similarly to the example in Section \ref{subsec:Hopf}, 
 we shall see below that the ring ${\mathbb{D}}_{\subgpC G} (\gpC X)$
 of invariant differential operators
 on the nonsymmetric space $\gpC X= \subgpC G/\subgpC H=Spin(9,{\mathbb{C}})/Spin(7,{\mathbb{C}})$
 is a polynomial ring
 of two generators,
 both of which are given 
 by second-order differential operators.  
Indeed,
 the three subalgebras ${\mathcal{P}}$, ${\mathcal{Q}}$ and ${\mathcal{R}}$
 in ${\mathbb{D}}_{\subgpC G}(\gpC X)$ are generated
 by a single differential operator
 $P_2$, $Q_2$, and $d l(R_2)$, respectively
 as below
 and there is a linear relation \eqref{eqn:PQR2} among them.

Let $C_{SO(16)}$, $C_{Spin(8)}$, and $C_{Spin(9)}$
 be the Casimir elements
 of the complex Lie algebras
 ${\mathfrak {s o}}(16, {\mathbb{C}})$, 
 ${\mathfrak {s p i n}}(8, {\mathbb{C}})$, 
 and ${\mathfrak {s p i n}}(9, {\mathbb{C}})$, 
 respectively.  
We set 
\begin{alignat*}{3}
P_2:=& d l(C_{SO(16)}) &&\in {\mathbb{D}}_{\gpC G}(\gpC X), 
\\
Q_2:=& d r(C_{Spin(8)}) &&\in {\mathbb{D}}_{\subgpC L}(\gpC F), 
\\
R_2:=& C_{Spin(9)} &&\in {\mathfrak{Z}}(\subgpC {\mathfrak{g}}).  
\end{alignat*}
\begin{proposition}
\label{prop:2.8}
\begin{enumerate}
\item[{\rm{(1)}}]
We have the following linear relations:
\begin{equation}
\label{eqn:PQR2}
  P_2 = 4 d l(R_2) -3 \iota(Q_2).  
\end{equation}
\item[{\rm{(2)}}]
The ring of $\subgpC G$-invariant holomorphic differential operators 
 on $\gpC X$ is a polynomial algebra
 of two generators
  with the following three expressions:
\[
{\mathbb{D}}_{\subgpC G}(\gpC X)
=
{\mathbb{C}}[P_2, \iota(Q_2)] 
={\mathbb{C}}[P_2, d l(R_2)]
={\mathbb{C}}[\iota(Q_2), d l(R_2)].  
\]
\end{enumerate}
\end{proposition}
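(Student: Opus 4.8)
The plan is to read off the ring structure from Theorem \ref{thm:A}(2) together with the Harish--Chandra isomorphism for the two rank-one symmetric spaces in sight, and then to pin down the single linear relation \eqref{eqn:PQR2} by a short Casimir computation. First I would record that the hypotheses of Theorem \ref{thm:A} hold: $\gpC X=\subgpC G/\subgpC H=Spin(9,{\mathbb{C}})/Spin(7,{\mathbb{C}})$ is $\subgpC G$-spherical---it is the open orbit of the prehomogeneous vector space $({\mathbb{C}}^{\times}\times Spin(9,{\mathbb{C}}),{\mathbb{C}}^{16})$ recalled above, and it occurs in the classification of Table \ref{table:GHLC}---$\gpC G=SO(16,{\mathbb{C}})$ is simple, and $\subgpC L=Spin(8,{\mathbb{C}})$ is a maximal reductive subgroup of $\subgpC G$ containing $\subgpC H$. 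Then I would identify ${\mathcal{P}}$ and ${\mathcal{Q}}$. As a $\gpC G$-space, $\gpC X\simeq SO(16,{\mathbb{C}})/SO(15,{\mathbb{C}})$ is the complexification of the rank-one symmetric space $S^{15}$, so by the Harish--Chandra isomorphism \eqref{eqn:DGX} ${\mathcal{P}}={\mathbb{D}}_{\gpC G}(\gpC X)={\mathbb{C}}[P_2]$ is the polynomial ring on the $\gpC G$-Laplacian $P_2=dl(C_{SO(16)})$; likewise $\gpC F=\subgpC L/\subgpC H=Spin(8,{\mathbb{C}})/Spin(7,{\mathbb{C}})$ is, by triality, the complexification of the rank-one symmetric space $S^7$, so ${\mathbb{D}}_{\subgpC L}(\gpC F)={\mathbb{C}}[Q_2]$ with $Q_2=dr(C_{Spin(8)})$, whence ${\mathcal{Q}}=\iota({\mathbb{D}}_{\subgpC L}(\gpC F))={\mathbb{C}}[\iota(Q_2)]$.

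By Theorem \ref{thm:A}(2), ${\mathbb{D}}_{\subgpC G}(\gpC X)$ is generated as a ${\mathbb{C}}$-algebra by ${\mathcal{P}}\cup{\mathcal{Q}}=\{P_2,\iota(Q_2)\}$, and it is a polynomial ring, $\gpC X$ being spherical (Definition-Theorem \ref{defthm:sp}(iv)). It is not a polynomial ring in one variable, because the principal symbol of $P_2$ (the nondegenerate $SO(16,{\mathbb{C}})$-invariant metric) and that of $\iota(Q_2)$ (which depends only on the vertical part of a covector for $\gpC F\to\gpC X\to\gpC Y$, hence vanishes on horizontal covectors) are algebraically independent; so $P_2$ and $\iota(Q_2)$ are algebraically independent and ${\mathbb{D}}_{\subgpC G}(\gpC X)={\mathbb{C}}[P_2,\iota(Q_2)]$ is a polynomial ring in these two generators, which is the first presentation in (2). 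Now $dl(R_2)=dl(C_{Spin(9)})$ has order at most $2$ and annihilates constants (its eigenvalue on the trivial representation is $0$); and since $\subgpC{\mathfrak{g}}/\subgpC{\mathfrak{h}}$ is, as a $Spin(7,{\mathbb{C}})$-module, a sum of two nontrivial irreducibles (the vector and the $8$-dimensional spin representations), we have $(\subgpC{\mathfrak{g}}/\subgpC{\mathfrak{h}})^{\subgpC H}=0$, so there is no $\subgpC G$-invariant operator of order $1$ and the space of elements of ${\mathbb{C}}[P_2,\iota(Q_2)]$ of order at most $2$ is ${\mathbb{C}}\oplus{\mathbb{C}}P_2\oplus{\mathbb{C}}\iota(Q_2)$. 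Therefore $dl(R_2)=b\,P_2+c\,\iota(Q_2)$ for some $b,c\in{\mathbb{C}}$, and assertion (1) is reduced to evaluating $b$ and $c$.

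For this I would pass to the compact form, where $\gpcpt X=S^{15}$ is at once $Spin(16)/Spin(15)$ and $Spin(9)/Spin(7)$, and use the map ${\mathcal{K}}_1\times{\mathcal{K}}_2$ of Lemma \ref{lem:invMF}: if $\vartheta$ is a $Spin(7)$-spherical irreducible $Spin(9)$-representation with ${\mathcal{K}}_1(\vartheta)=\pi$ and ${\mathcal{K}}_2(\vartheta)=\tau$, then on the line $L^2(\gpcpt X)[\vartheta]$ the operators $P_2$, $dl(R_2)$, $\iota(Q_2)$ act by the Casimir eigenvalues $\langle\mu,\mu+2\rho\rangle$ of $\pi$, $\vartheta$, $\tau$ for $SO(16)$, $Spin(9)$, $Spin(8)$, respectively. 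Evaluating on $\vartheta={\mathbb{C}}^9$ (the vector representation, occurring in the degree $2$ spherical harmonics, with $\tau$ trivial because ${\mathbb{C}}^9|_{Spin(8)}={\mathbb{C}}^8\oplus{\mathbb{C}}$ and ${\mathbb{C}}^8$ restricts to the spin representation of $Spin(7)$) and on $\vartheta={\mathbb{C}}^{16}$ (the spin representation, occurring in the degree $1$ spherical harmonics, with $\tau$ a half-spin representation of $Spin(8)$) yields two linear equations---$8=32\,b$ and $9=15\,b+7\,c$---with solution $(b,c)=(\tfrac14,\tfrac34)$, i.e.\ \eqref{eqn:PQR2}. (Equivalently, the three operators can be written out directly in coordinates on ${\mathbb{C}}^{16}$, as for the analogous Hopf bundle in Section \ref{subsec:Hopf}.) I expect this eigenvalue bookkeeping, together with the few small branching facts it needs, to be the main---indeed essentially the only---calculation. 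Part (2) is then formal: by \eqref{eqn:PQR2} we have $\iota(Q_2)=\tfrac43\,dl(R_2)-\tfrac13\,P_2$ and $P_2=4\,dl(R_2)-3\,\iota(Q_2)$, so ${\mathbb{C}}[P_2,dl(R_2)]$ and ${\mathbb{C}}[\iota(Q_2),dl(R_2)]$ both equal ${\mathbb{C}}[P_2,\iota(Q_2)]={\mathbb{D}}_{\subgpC G}(\gpC X)$; and since $P_2$, $\iota(Q_2)$, $dl(R_2)$ are pairwise linearly independent in the plane ${\mathbb{C}}P_2\oplus{\mathbb{C}}\iota(Q_2)$, any two of them are algebraically independent, giving the three polynomial-ring presentations.
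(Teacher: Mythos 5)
Your proposal is correct: the coefficients check out ($8=32b$ and $9=15b+7c$ do give $(b,c)=(\tfrac14,\tfrac34)$, i.e.\ \eqref{eqn:PQR2}), the identification of the two test components is right (${\mathcal{H}}^1({\mathbb{R}}^{16})|_{Spin(9)}$ is the spin representation with ${\mathcal{K}}_2$ a half-spin of $Spin(8)$, and ${\mathbb{C}}^9=\vartheta_{2,0}\subset{\mathcal{H}}^2({\mathbb{R}}^{16})$ with ${\mathcal{K}}_2$ trivial, since under the spinor embedding ${\mathfrak{so}}(9)/{\mathfrak{spin}}(7)\simeq {\mathbb{C}}^7\oplus(\text{spin}_7)$ has no trivial summand), and the reduction of (2) to algebraic independence of the symbols plus the linear relation is sound. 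The route differs from the paper's in organization rather than in engine: the paper proves Proposition \ref{prop:2.8} by computing the \emph{entire} double fibration of Theorem \ref{thm:20160506} in the compact picture --- $\Disc(Spin(9)/Spin(7))=\{\vartheta_{j,k}\}$ \`a la Kr{\"a}mer, ${\mathcal{K}}_1(\vartheta_{j,k})={\mathcal{H}}^j({\mathbb{R}}^{16})$, ${\mathcal{K}}_2(\vartheta_{j,k})={\mathcal{H}}^k({\mathbb{R}}^8)$ --- so that \eqref{eqn:PQR2} is the identity $j(j+14)=4\bigl(\tfrac{j^2+3k^2}{4}+\tfrac{7j+9k}{2}\bigr)-3k(k+6)$ holding for all $(j,k)\in\Xi(0)$, and the generators-and-relations statement (2) comes out of the same explicit data (details deferred to \cite{KKinv}); this full fibration is in any case needed later for Theorem \ref{thm:Spin}. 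You instead derive the shape $d l(R_2)\in{\mathbb{C}}P_2+{\mathbb{C}}\iota(Q_2)$ a priori (no invariant vector fields, symbol independence) so that only the degree-one and degree-two harmonics are needed, and you import the generation statement from Theorem \ref{thm:A}(2) rather than extracting it from the explicit fibration. What your version buys is economy (two small branching facts instead of the full Kr{\"a}mer-type decomposition); what it costs is reliance on Theorem \ref{thm:A}, which is stated in the paper but proved in \cite{KKinv} precisely by the kind of explicit compact computation the paper sketches here, so for this particular example the paper's direct computation is the more self-contained of the two.
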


\begin{remark}
{\rm{Howe and Umeda
 \cite[Sect.~11.11]{hu91}
 obtained a weaker from 
 of Proposition \ref{prop:2.8} 
 for the prehomogeneous vector space
 $({\mathbb{C}}^{\times} \times Spin(9,{\mathbb{C}}), {\mathbb{C}}^{16})$.  
In particular,
 they proved
 that the ${\mathbb{C}}$-algebra homomorphism
 $d l:{\mathfrak{Z}}(\subgpC {\mathfrak {g}}) \to {\mathbb{D}}_{\subgpC G}(\gpC X)$
 is not surjective
 and that the \lq\lq{abstract Capelli problem}\rq\rq\ has a negative answer.  
The novelty here is to introduce the operator $Q_2 \in {\mathbb{D}}_{\subgpC L}(\gpC F)$
 coming from the fiber $\gpC F$
 to describe the algebra ${\mathbb{D}}_{\subgpC G}(\gpC X)$.  
}}
\end{remark}
The proof of Proposition \ref{prop:2.8} relies on an explicit computation
 of the double fibration
 of Theorem \ref{thm:20160506}
 (or Lemma \ref{lem:invMF}).  
We briefly state some necessary computations.  

We denote by $F(\subgpcpt L, \lambda)$ the irreducible finite-dimensional 
 representation of a connected compact Lie group $\subgpcpt L$
 with extremal weight $\lambda$.  
We set 
\[
  \vartheta_{a,b}
  :=
  F(Spin(9), \frac 1 2(a,b,b,b))
\]
for $a \ge b \ge 0$
 with $a \equiv b \mod 2$, 
 namely,
 for $(a,b) \in \Xi(0)$.  
Then the sets of discrete series representations
 for $\gpcpt G/\gpcpt H$, 
 $\subgpcpt G/\subgpcpt H$, 
 and $\subgpcpt L/\subgpcpt H$
 are given as follows:
\begin{lemma}
\begin{align*}
\Disc(S O(16)/S O(15)) = & \{{\mathcal{H}}^j({\mathbb{R}}^{16}): j\in {\mathbb{N}}\}, 
\\
\Disc(Spin(9)/Spin(7)) = & \{\vartheta_{j,k}: (j,k)\in \Xi(0)\}, 
\\
\Disc(Spin(8)/Spin(7)) = & \{{\mathcal{H}}^k({\mathbb{R}}^8): k \in {\mathbb{N}}\}.  
\end{align*}
\end{lemma}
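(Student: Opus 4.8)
The three assertions all describe the discrete spectrum $\underline{\Disc}(G/H)$ for a \emph{compact} pair $G\supset H$, so by Frobenius reciprocity each one reduces to listing the $\sigma\in\widehat{G}$ with $\sigma^{H}\neq\{0\}$, counted with multiplicity $\dim\sigma^{H}$. The plan is to dispose of the two sphere cases by the classical orthogonal branching rule, and of the nonsymmetric space $Spin(9)/Spin(7)$ by the double fibration of Theorem~\ref{thm:20160506}.

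\emph{The two spheres.} For $SO(n)/SO(n-1)\cong S^{n-1}$ I would use the classical $SO(n)\downarrow SO(n-1)$ branching rule: $F(SO(n),\mu)|_{SO(n-1)}$ is multiplicity-free and $F(SO(n-1),\nu)$ occurs iff $\mu,\nu$ interlace. Specializing $\nu=0$ forces $\mu=(j,0,\dots,0)$, so the trivial $SO(n-1)$-type occurs exactly once in $\mathcal H^{j}(\mathbb R^{n})$ and in no other irreducible; taking $n=16$ gives the first line. The space $Spin(8)/Spin(7)$ is again a $7$-sphere: here $Spin(7)$ is the stabilizer of a generic spinor, so triality identifies $(Spin(8),Spin(7))$ with $(SO(8),SO(7))$, the vector module being replaced by a half-spin module; the same argument then gives $\Disc(Spin(8)/Spin(7))=\{\mathcal H^{k}(\mathbb R^{8}):k\in\mathbb N\}$, multiplicity-free, where now $\mathcal H^{k}(\mathbb R^{8})$ denotes $F(Spin(8),\tfrac k2(1,1,1,\pm1))$.

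\emph{The space $Spin(9)/Spin(7)$.} Put $\subgpR L=Spin(8)$, the maximal connected subgroup between $Spin(7)$ and $Spin(9)$, so that the fibration $\gpR F=Spin(8)/Spin(7)\to\gpR X=Spin(9)/Spin(7)\to\gpR Y=Spin(9)/Spin(8)$ is, in the compact form, the octonionic Hopf fibration $S^{7}\to S^{15}\to S^{8}$. Since $\gpR X$ is compact, hence real spherical, Proposition~\ref{prop:1608119} (the $\mathcal K_{2}$-part of Theorem~\ref{thm:20160506}) gives
\[
\underline{\Disc}(Spin(9)/Spin(7))\;\simeq\;\bigcup_{\tau\in\underline{\Disc}(Spin(8)/Spin(7))}\underline{\Disc}\bigl(Spin(9)/Spin(8),\tau\bigr),
\]
and Frobenius reciprocity (again $Spin(8)$ is compact) identifies $\underline{\Disc}(Spin(9)/Spin(8),\tau)$ with the $\sigma\in\widehat{Spin(9)}$ such that $[\sigma|_{Spin(8)}:\tau]\neq0$, with that multiplicity. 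Feeding in $\tau=\mathcal H^{k}(\mathbb R^{8})=F(Spin(8),\tfrac k2(1,1,1,\pm1))$ and invoking the $\mathfrak{so}(9)\downarrow\mathfrak{so}(8)$ (type $B_{4}\downarrow D_{4}$) branching rule, the interlacing constraint for $F(Spin(9),\mu)|_{Spin(8)}$ to contain $F(Spin(8),\tfrac k2(1,1,1,\pm1))$ forces $\mu_{2}=\mu_{3}=\mu_{4}=\tfrac k2$ and $\mu_{1}\ge\tfrac k2$, together with the $\mathfrak{spin}$-integrality $\mu_{1}-\tfrac k2\in\mathbb Z$, and the multiplicity is then one. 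Writing $\mu_{1}=\tfrac a2$ and $b=k$, this is precisely $\mu=\tfrac12(a,b,b,b)$ with $(a,b)\in\Xi(0)$; running over $k\in\mathbb N$ yields $\underline{\Disc}(Spin(9)/Spin(7))=\{\vartheta_{a,b}:(a,b)\in\Xi(0)\}$, each with multiplicity one, which also confirms that $\underline{\Disc}(Spin(9)/Spin(7))$ is multiplicity-free, as needed for Theorem~\ref{thm:20160506}(3).

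\emph{Main obstacle.} The one delicate point is the bookkeeping around the exotic embedding $Spin(7)\subset Spin(8)$: one must use triality to see that $\mathcal H^{k}(\mathbb R^{8})$ enters through a half-spin module of $D_{4}$ rather than through the vector module — the vector interpretation would replace $\tfrac12(a,b,b,b)$ by $(j,0,0,0)$ and would also destroy multiplicity-freeness — and one must carry $\mathfrak{spin}$ versus $\mathfrak{so}$ integrality correctly through $B_{4}\downarrow D_{4}$ so as to land exactly on the parity condition $a\equiv b\pmod 2$ that cuts out $\Xi(0)$. A more computational alternative for the third line is to decompose $\Delta_{9}|_{Spin(7)}$ via the octonions and the real Clifford algebra (as in \cite{ky05}) and read off the $Spin(7)$-invariants in the coordinate ring of the open $Spin(9)$-orbit in $\mathbb C^{16}$, but the route above via the fibration is cleaner.
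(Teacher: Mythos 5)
Your proposal is correct, but it reaches the middle equality by a different route than the paper. The paper disposes of the first and third lines by citing the classical theory of spherical harmonics (Helgason) and of the second by simply citing Kr\"amer's classification of spherical pairs, which contains the decomposition of $L^2(Spin(9)/Spin(7))$ ready-made. You instead derive the second line from the other two: you run the octonionic Hopf fibration $S^7 \to S^{15} \to S^8$ through Proposition \ref{prop:1608119} (induction by stages plus Frobenius reciprocity, which in the compact case is elementary) and then apply the multiplicity-free $B_4 \downarrow D_4$ interlacing rule to $\tau = F(Spin(8), \tfrac k2(1,1,1,\pm 1))$, landing exactly on the highest weights $\tfrac12(a,b,b,b)$ with $(a,b)\in \Xi(0)$. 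This buys you two things the paper's citation does not make explicit: a self-contained verification (modulo classical branching rules) that $\underline{\Disc}(Spin(9)/Spin(7))$ is multiplicity-free, which the paper needs later for Theorem \ref{thm:20160506}(3) and the $SO(8,8)\downarrow Spin(1,8)$ computation, and a careful treatment of the triality-twisted embedding $Spin(7)\subset Spin(8)$, which the paper glosses over in writing ${\mathcal H}^k({\mathbb R}^8)$ but implicitly uses later when it sets $\tau_k = F(Spin(8),\tfrac12(k,k,k,k))$ in Section \ref{subsec:Spin}; your observation that the vector-module reading would give $(k,0,0,0)$ and break the computation is exactly the right caveat. The trade-off is only length: the paper's one-line citation of Kr\"amer is shorter, while your argument is in the same spirit as the double fibration machinery the paper develops and illustrates anyway.
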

\begin{proof}
The first and third equalities follow from the classical theory
 of spherical harmonics
 (see {\it{e.g.}} \cite[Intr. Thm.~3.1]{HelGGA}), 
 and the second equality from Kr{\"a}mer \cite{kra79}.  
\end{proof}

We set
\begin{equation}
\label{eqn:Xi}
\Xi(\mu):=
\{(m,n) \in {\mathbb{N}}^2:
  m-n \ge \mu, \quad m-n  \equiv \mu \mod 2\}.  
\end{equation}
Then the double fibration of Theorem \ref{thm:20160506}
 (or Lemma \ref{lem:invMF}) amounts to 
\begin{alignat*}{4}
&                        
&&\{\vartheta_{j,k} \in \dual{Spin(9)}: (j,k) \in \Xi(0) \}
&&
&&
\\
& {\mathcal{K}}_1 \swarrow &&                                     &&\searrow {\mathcal{K}}_2&&
\\
\{{\mathcal{H}}^j({\mathbb{R}}^{16}): j \in {\mathbb{N}}\}
&
&&
&&
&&\{{\mathcal{H}}^k({\mathbb{R}}^{8}): k \in {\mathbb{N}}\}
\end{alignat*}

We use the following normalization 
 of Harish-Chandra isomorphisms:
\[
\operatorname{Hom}_{{\mathbb{C}}\operatorname{-alg}}
({\mathbb{D}}_{\gpC G}
 (\gpC X), {\mathbb{C}})
\simeq
{\mathbb{C}}/{\mathbb{Z}}_2,
\qquad
\chi_{\lambda}^X 
\leftrightarrow
 \lambda
\]
by $\chi_{\lambda}^X (P_2) =\lambda^2-49$.  

\[
\operatorname{Hom}_{{\mathbb{C}}\operatorname{-alg}}
({\mathfrak{Z}}(\subgpC {\mathfrak{g}}), {\mathbb{C}})
\simeq
{\mathbb{C}}^4/W(B_4)
=
{\mathbb{C}}^4/({\mathfrak{S}}_4 \ltimes ({\mathbb{Z}}_2)^4), 
\quad
\qquad
\chi_{\nu}^{G'} \leftrightarrow \nu
\]
such that 
the ${\mathfrak{Z}}(\subgpC {\mathfrak{g}})$-infinitesimal character
 of the trivial representation of $\subgpC {\mathfrak {g}}$ is given
 by $\chi_{\nu}^{G'}$
with $\nu=\frac 1 2(7,5,3,1)$.  
Via these identifications, 
 for every $\tau={\mathcal{H}}^k({\mathbb{R}}^8) \in 
\Disc(Spin(8)/Spin(7))$, 
 the map
\begin{equation}
\label{eqn:nutau3}
\nu_{\tau}:
\operatorname{Hom}_{{\mathbb{C}}\operatorname{-alg}}
({\mathbb{D}}_{\gpC G}
 (\gpC X), {\mathbb{C}})
\to
\operatorname{Hom}_{{\mathbb{C}}\operatorname{-alg}}
({\mathfrak{Z}}(\subgpC{\mathfrak{g}}), {\mathbb{C}}), 
\quad
\chi_{\lambda}^X \mapsto \chi_{\nu}^{G'}
\end{equation}
in Theorem \ref{thm:C} amounts to 
\begin{equation}
\label{eqn:nutau88}
{\mathbb{C}}/{\mathbb{Z}}_2 \to {\mathbb{C}}^4/W(B_4), 
\quad
\lambda \mapsto \nu=\frac 1 2(\lambda, k+5, k+3, k+1).  
\end{equation}

\subsection{$\gpC X:=GL(2n+1,{\mathbb{C}})/Sp(n,{\mathbb{C}})$}
\label{subsec:4.4}

The last example treats the case
 where $\gpC X$ is of higher rank.  
We consider $\gpC X = \gpC G/ \gpC H \simeq \subgpC G /\subgpC H$
 and $\gpC F = \subgpC L/\subgpC H$ defined by
\[
\begin{pmatrix}
\gpC G & \supset & \gpC H
\\
\cup & & \cup
\\
\subgpC G & \supset  \subgpC L \supset & \subgpC H
\end{pmatrix}
:=
\begin{pmatrix}
GL(2n+2,{\mathbb{C}}) & \supset & S p(n+1,{\mathbb{C}})
\\
\cup & & \cup
\\
G L(2n+1,{\mathbb{C}})  & \supset  G L(2n,{\mathbb{C}}) \times G L(1,{\mathbb{C}})\supset & S p(n,{\mathbb{C}})
\end{pmatrix}.  
\]

This is essentially the case
 in Table \ref{table:GHLC} (iv)
 except that $\gpC G$ contains
 a one-dimensional center.  
We note
 that $\gpC X$ is a nonsymmetric spherical homogeneous space
 of rank $2n+1$
 if we regard $\gpC X \simeq \subgpC G/\subgpC H$, 
 but is a symmetric space of rank $n+1$
 if we regard $\gpC X \simeq \gpC G/\gpC H$.  

First, 
 for the symmetric space 
$
\gpC X
=
GL(2n+2,{\mathbb{C}})/S p(n+1,{\mathbb{C}})
$, 
 the restricted root system
 $\Sigma(\gpC {\mathfrak{g}}, \gpC {\mathfrak{a}})$
 is of type $A_n$.  
We take the standard basis $\{h_1, \cdots, h_{n+1}\}$
 of $\gpC {\mathfrak{a}}^{\ast}$
 such that
\[
   \Sigma(\gpC {\mathfrak{g}}, \gpC {\mathfrak{a}})
  =
  \{h_j - h_k:1 \le j< k \le n+1\}.  
\]
By these coordinates,
 the Harish-Chandra isomorphism
 amounts to:
\[
\operatorname{Hom}_{{\mathbb{C}}\operatorname{-alg}}
({\mathbb{D}}_{\gpC G}({\gpC X}),{\mathbb{C}})
\simeq \gpC {\mathfrak{a}}^{\ast}/ W(A_n)
\simeq
{\mathbb{C}}^{n+1}/{\mathfrak{S}}_{n+1}, 
\qquad
\chi_{\lambda}^X \leftrightarrow \lambda.  
\]
For $k \in {\mathbb{N}}$, 
we define $P_k \in {\mathbb{D}}_{\gpC G}({\gpC X})$
 by 
\[
  \chi_{\lambda}^X(P_k) = \sum_{j=1}^{n+1} \lambda_j^k
\quad
\text{ for }
\lambda=(\lambda_1, \cdots, \lambda_{n+1})
\in {\mathbb{C}}^{n+1}/{\mathfrak {S}}_{n+1}.  
\]
Second, 
 the fiber $\gpC F$ of the bundle
 $\gpC X = \subgpC G/\subgpC H \to \subgpC G/\subgpC L$
 is also a symmetric space:
\[
\gpC F = \subgpC L/\subgpC H
\simeq
(G L(2n,{\mathbb{C}})/S p(n, {\mathbb{C}})) \times G L(1, {\mathbb{C}}).  
\]
We define similarly $Q, Q_k \in {\mathbb{D}}_{\subgpC L}(\subgpC F)$
 for $k \in {\mathbb{N}}$ by
\begin{align*}
\chi_{\mu}^F(Q) = & \mu_0, 
\\
\chi_{\mu}^F(Q_k) = & \sum_{j=1}^n \mu_j^k, 
\end{align*}
for $\mu = (\mu_1, \cdots, \mu_n;\mu_0)
\in ({\mathbb{C}}^n \oplus {\mathbb{C}})/(W(A_n) \times \{1\})$.  
Then the Harish-Chandra isomorphism gives
 the description of the polynomial algebras
 ${\mathbb{D}}_{\gpC G}(\gpC X)$ and ${\mathbb{D}}_{\subgpC L}(\gpC F)$:

\begin{align*}
{\mathbb{D}}_{\gpC G}(\gpC X)
=&
{\mathbb{C}}[P_1, \cdots, P_{n+1}], 
\\
{\mathbb{D}}_{\subgpC L}(\gpC F)
=&
{\mathbb{C}}[Q,Q_1, \cdots, Q_{n}].  
\end{align*}
In this case,
 Theorems \ref{thm:B} and \ref{thm:C} amount
 to the following:
\begin{proposition}
\begin{enumerate}
\item[{\rm{(1)}}]
The generators $P_k$, $Q$, $Q_k$ and $R_k$
 are subject to the following relations:
\begin{align*}
  P_k + \iota(Q_k) =& 2^k d l(R_k)
\quad
 \text{for all $k \in {\mathbb{N}}$, }
\\
P_1-\iota(Q)=&d l(R_1).  
\end{align*}
\item[{\rm{(2)}}]
The ring of $\subgpC G$-invariant holomorphic differential operators
 on $\gpC X$ is a polynomial algebra
 of $(2n+1)$-generators
 with the following expressions:
\begin{align*}
{\mathbb{D}}_{\subgpC G}(\gpC X)
=&
{\mathbb{C}}[P_1, \cdots, P_{n+1}, \iota(Q_1), \cdots, \iota(Q_n)]
\\
=&
{\mathbb{C}}[P_2, \cdots, P_{n+1}, \iota(Q), \iota(Q_1), \cdots, \iota(Q_n)]
\\
=&
{\mathbb{C}}[\iota(Q_1), \cdots, \iota(Q_n), d l(R_1), \cdots, d l(R_{n+1})]
\\
=&
{\mathbb{C}}[P_1, \cdots, P_{n+1}, d l(R_1), \cdots, d l(R_{n})]
\\
=&
{\mathbb{C}}[P_1, \cdots, P_{n}, d l(R_1), \cdots, d l(R_{n+1})].  
\end{align*}
\item[{\rm{(3)}}]
For $\tau = F(U(2n), (k_1, k_1, k_2, k_2, \cdots, k_n,k_n))
 \boxtimes F(U(1), k_0)
 \in \operatorname{Disc}(\subgpcpt L/\subgpcpt H)$, 
 the map $\nu_{\tau}$ in Theorem \ref{thm:C} is given as 
\begin{alignat*}{3}
  \nu_{\tau}
  :
  &\Hom_{{\mathbb{C}}\operatorname{-alg}}
 ({\mathbb{D}}_{\gpC G}({\gpC X}),{\mathbb{C}})
  &&\to 
  &&\Hom_{{\mathbb{C}}\operatorname{-alg}}
  ({\mathfrak{Z}}(\subgpC {\mathfrak{g}}), {\mathbb{C}}), 
\\
  & \qquad \quad\text{\rotatebox{90}{$\simeq$}}
  &&
  &&\qquad \quad\text{\rotatebox{90}{$\simeq$}}
\\
  &\quad {\mathbb{C}}^{n+1}/{\mathfrak {S}}_{n+1}
  &&\to 
  &&\quad {\mathbb{C}}^{2n+1}/{\mathfrak {S}}_{2n+1}, 
  \quad
  \lambda \mapsto \nu_{\tau}(\lambda), 
\end{alignat*}
where 
\begin{equation}
\label{eqn:nutau4}
  \nu_{\tau}(\lambda)
  :=
  (\frac{\lambda_1}{2}, \cdots, \frac{\lambda_n}{2}, 
   k_1+n-1, k_2+n-3, \cdots, k_n-n+1, k_0).  
\end{equation}
\end{enumerate}
\end{proposition}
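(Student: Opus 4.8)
The plan is to obtain all three assertions as an explicit instance of the machinery of Theorems \ref{thm:B} and \ref{thm:C} and Lemma \ref{lem:invMF}, by working out the double fibration of Theorem \ref{thm:20160506} completely in this case. First I would pass to the compact real form, so that $\subgpcpt G = U(2n+1)$, $\gpcpt G = U(2n+2)$, $\subgpcpt H = Sp(n)$, $\subgpcpt L = U(2n)\times U(1)$ and $\gpcpt H = Sp(n+1)$, and describe the three sets of discrete series together with the two maps ${\mathcal K}_1,{\mathcal K}_2$ on highest weights. Here $\Disc(\gpcpt G/\gpcpt H)$ and $\Disc(\subgpcpt L/\subgpcpt H)$ are controlled by the classical classification of spherical representations of $GL(2m)/Sp(m)$ (highest weights of the symplectically doubled shape $(g_1,g_1,\dots,g_m,g_m)$), with an extra $U(1)$-character for the fiber $\gpC F$; and $\Disc(\subgpcpt G/\subgpcpt H)$ for the nonsymmetric space $U(2n+1)/Sp(n)$ follows by combining the Gelfand--Tsetlin interlacing rule for $U(2n+1)\downarrow U(2n)$ with the $GL(2n)/Sp(n)$ classification. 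The interlacing branching rule for $U(2n+2)\downarrow U(2n+1)\downarrow U(2n)\times U(1)$ then identifies ${\mathcal K}_1(\vartheta)$ as the doubled weight interlacing $\vartheta$ from above, and ${\mathcal K}_2(\vartheta)$ as the doubled $U(2n)$-weight together with the $U(1)$-weight interlaced from below.

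Next I would compute infinitesimal characters and eigenvalues. Using the normalization of the $R_k$ fixed at the beginning of Section \ref{sec:example}, the Harish-Chandra isomorphism \eqref{eqn:DGX} for the two symmetric spaces $\gpC X = \gpC G/\gpC H$ and $\gpC F = \subgpC L/\subgpC H$, and the resulting eigenvalue formulas for $P_k$, $Q$, $Q_k$, I would write down the scalar by which each of $P_k$, $\iota(Q)$, $\iota(Q_k)$ and $dl(R_k)$ acts on a fixed $\vartheta\in\Disc(\subgpcpt G/\subgpcpt H)$, expressing everything through the coordinates of the infinitesimal character $\nu$ of $\vartheta$ regarded as a $\subgpC G$-module; the odd/even splitting of $\nu$ coming from the interlacing pattern separates the $P$-data (the $\lambda_j$), the fiber $Q$-data (the $k_j$) and the $U(1)$-datum $k_0$. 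With these formulas in hand, assertion (1) becomes a pair of elementary identities among power sums of the coordinates of $\nu$, in which the factor $2^k$ is exactly the doubling forced by the symplectic form on $\mathbb{C}^{2n+2}$ relative to $\mathbb{C}^{2n+1}$. Since $\gpC X$ is $\subgpC G$-spherical, $\mathbb{D}_{\subgpC G}(\gpC X)$ is commutative and acts with simple spectrum on $L^2(\gpcpt X)$, so coincidence of all eigenvalues on $\Disc(\subgpcpt G/\subgpcpt H)$ upgrades to equality of the operators themselves; this proves (1).

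For assertion (2), recall that by Theorem \ref{thm:B}(1) the ring $\mathbb{D}_{\subgpC G}(\gpC X)$ is a polynomial ring, of Krull dimension equal to the rank $2n+1$ of $\gpC X\simeq\subgpC G/\subgpC H$. Theorem \ref{thm:B}(2) is not available here, since $\gpC G=GL(2n+2,\mathbb{C})$ is not simple, so the presentations involving $\iota(Q)$ and $\iota(Q_k)$ must be derived by hand: using the relations of part (1) together with Newton's identities — passing between power sums and elementary symmetric functions, and rewriting $\iota(Q_{n+1})$ as a polynomial in $\iota(Q_1),\dots,\iota(Q_n)$ — one checks that each of the five displayed families of $2n+1$ elements generates the ring, and algebraic independence of each family then follows from a Jacobian computation of the corresponding eigenvalue functions on $\Disc(\subgpcpt G/\subgpcpt H)$ at a generic point. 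Since an algebraically independent generating set of a polynomial ring of the same cardinality is automatically a polynomial basis, this gives (2). Finally, for assertion (3), Theorem \ref{thm:C}(2)--(3) supplies the map $\nu_\tau$, characterized by the property that the infinitesimal character of any $\vartheta\in{\mathcal K}_1^{-1}(\pi)\cap{\mathcal K}_2^{-1}(\tau)$ equals $\nu_\tau(\lambda)$; substituting the highest weight of $\vartheta$ as read off in the first step, adding the $\rho$ of $\mathfrak{gl}(2n+1,\mathbb{C})$, and matching with the chosen normalizations of the $P_k$ and of the Harish-Chandra coordinates on the factors yields the explicit formula \eqref{eqn:nutau4}.

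The main obstacle is the first step, together with the bookkeeping of $\rho$-shifts that propagates through the eigenvalue computations. Identifying ${\mathcal K}_1$ and ${\mathcal K}_2$ from the branching rules is the conceptual crux, and after that the only genuine risk is an inconsistent choice of normalizations: the exact constants in the statement (the $2^k$, the $\lambda_j/2$, the shifts $k_j+(n+1-2j)$, and the final coordinate $k_0$) depend on how the generators $P_k$ and the Harish-Chandra coordinates on $\gpC X$, on $\gpC F$ and on ${\mathfrak{Z}}(\subgpC{\mathfrak{g}})$ are normalized, and these must be fixed compatibly across all three objects before the power-sum identities of part (1) and the formula \eqref{eqn:nutau4} can be asserted with the stated coefficients.
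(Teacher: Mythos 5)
Your outline is sound and follows essentially the route the paper itself indicates: the paper prints no proof of this proposition (the detailed computations are deferred to \cite{KKinv}), but, exactly as in the proof of Proposition \ref{prop:2.8}, its method is the explicit computation of the compact double fibration of Theorem \ref{thm:20160506} (Lemma \ref{lem:invMF}) via the Gelfand--Tsetlin interlacing rules and the $GL(2m,{\mathbb{C}})/Sp(m,{\mathbb{C}})$ spherical classification, followed by comparison of eigenvalues on the multiplicity-free decomposition of $L^2(\gpcpt X)$ and an appeal to the structural results of Theorems \ref{thm:A}--\ref{thm:C}. The only points to keep straight are the ones you already flag---compatible normalizations of the Harish-Chandra coordinates on $\gpC X$, $\gpC F$ and ${\mathfrak{Z}}(\subgpC{\mathfrak{g}})$---and, for the generation step in (2), it is cleanest to start from Theorem \ref{thm:A}(1)/(2) (generation by ${\mathcal{P}}$ and ${\mathcal{R}}$, respectively ${\mathcal{Q}}$), which unlike Theorem \ref{thm:B}(2) does not require $\gpC G$ simple.
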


\subsection{List of examples}
\label{subsec:list}

We give an exhaustive list of quadruples $(\gpcpt G,\gpcpt H,\subgpcpt G, \subgpcpt H)$
 in Table \ref{table:GHLU}
 up to finite coverings of groups, 
subject to the following four conditions:
\begin{enumerate}
\item[$\cdot$]
$\gpcpt G$ is a compact simple Lie group, 
\item[$\cdot$]
$\gpcpt H$ and~$\gpcpt G'$ are maximal proper subgroups of~$\gpcpt G$, 
\item[$\cdot$]
$\gpcpt G=\gpcpt H \gpcpt G'$, 
\item[$\cdot$]
$\subgpC G/\subgpC H$ is $\subgpC G$-spherical.  
\end{enumerate}
In Table \ref{table:GHLU}, 
 we also write a maximal proper subgroup 
 $\subgpcpt L$ of $\subgpcpt G$
 that contains $\subgpcpt H$.  
The complexifications $(\gpC G, \gpC H, \subgpC G, \subgpC H)$
 of the quadruples
 $(\gpcpt G,\gpcpt H, \subgpcpt G, \subgpcpt H)$
 in Table \ref{table:GHLU} 
 together with $\gpC F:= \subgpC L/\subgpC H$
 are given in Table \ref{table:GHLC}, 
 and their real forms are in Table \ref{table:GHL}
 up to finite coverings
 and finitely many disconnected components.  
When two subgroups $\gpR {K_1}$ and $\gpR {K_2}$
 commute each other
 and $\gpR {K_1} \cap \gpR {K_2}$
 is a finite group,
 we write $\gpR {K_1} \cdot \gpR {K_2}$
 for the quotient group $(\gpR {K_1} \times \gpR {K_2})/\gpR {K_1} \cap \gpR {K_2}$
 in Tables \ref{table:GHLU}-\ref{table:GHL}.

Theorems \ref{thm:A} and \ref{thm:B} apply 
 to Table \ref{table:GHLC}.  
The pair $({\mathcal{K}}_1, {\mathcal{K}}_2)$ of maps 
in Lemma \ref{lem:invMF}
 (the compact case of Theorem \ref{thm:20160506})
 will be computed explicitly
 in \cite{KKinv}
 for all the cases 
 in Table \ref{table:GHLU}.  
Theorem \ref{thm:deco} for discretely decomposable restrictions
 apply to those in Table \ref{table:GHL}
 with $\gpR F= \subgpR L/\subgpR H$ compact.  
Theorem \ref{thm:D}
 for spectral analysis
 on non-Riemannian locally symmetric spaces apply to those 
 in Table \ref{table:GHL}
 with $\subgpR H$ compact.

\begin{table}[H]
\caption{compact case}
\label{table:GHLU}
\hspace{-2cm}
\begin{tabular}
{c|c|c|c|c|c}
& \centering $\gpcpt G$ 
& \centering $\gpcpt H$ 
& \centering $\gpcpt G'$ 
& \centering $\gpcpt H'$ 
& \centering $\gpcpt L'$
\tabularnewline
\hline
\centering (i) & \centering $SO(2n+2)$ & \centering $SO(2n+1)$ & \centering $U(n+1)$ & \centering $U(n)$ & \centering $U(n)\cdot U(1)$\tabularnewline
\centering (ii) & \centering $SO(2n+2)$ & \centering $U(n+1)$ & \centering $SO(2n+1)$ & \centering $U(n)$ & \centering $SO(2n)$\tabularnewline
\centering (iii) & \centering $SU(2n+2)$ & \centering $U(2n+1)$ & \centering $Sp(n+1)$ & \centering $Sp(n)\cdot U(1)$ & \centering $Sp(n)\cdot Sp(1)$
\tabularnewline
\centering (iv) & \centering $SU(2n+2)$ & \centering $Sp(n+1)$ & \centering $U(2n+1)$ & \centering $Sp(n)\cdot U(1)$ & \centering $U(2n)\cdot U(1)$\tabularnewline
\centering (v) & \centering $SO(4n+4)$ & \centering $SO(4n+3)$ & \centering $Sp(n+1)\cdot Sp(1)$ & \centering $Sp(n)\cdot \diag(Sp(1))$ & \centering $Sp(n)\cdot Sp(1)\cdot Sp(1)$\tabularnewline
\centering (vi) & \centering $SO(16)$ & \centering $SO(15)$ & \centering $Spin(9)$ & \centering $Spin(7)$ & \centering $Spin(8)$\tabularnewline
\centering (vii) & \centering $SO(8)$ & \centering $Spin(7)$ & \centering $SO(5)\cdot SO(3)$ & \centering $SU(2)\cdot\diag(SU(2))$ & \centering $SO(4)\cdot SO(3)$\tabularnewline
\centering (viii) & \centering $SO(7)$ & \centering $G_{2(-14)}$ & \centering $SO(5)\cdot SO(2)$ & \centering $SU(2)\cdot\diag(SO(2))$ & \centering $SO(4)\cdot SO(2)$\tabularnewline
\centering (ix) & \centering $SO(7)$ & \centering $G_{2(-14)}$ & \centering $SO(6)$ & \centering $SU(3)$ & \centering $U(3)$\tabularnewline
\centering (x) & \centering $SO(7)$ & \centering $SO(6)$ & \centering $G_{2(-14)}$ & \centering $SU(3)$ & \centering $SU(3)$\tabularnewline
\centering (xi) & \centering $SO(8)$ & \centering $Spin(7)$ & \centering $SO(7)$ & \centering $G_{2(-14)}$ & \centering $G_{2(-14)}$\tabularnewline
\centering (xii) & \centering $SO(8)$ & \centering $SO(7)$ & \centering $Spin(7)$ & \centering $G_{2(-14)}$ & \centering $G_{2(-14)}$\tabularnewline
\centering (xiii) & \centering $SO(8)$ & \centering $Spin(7)$ & \centering $SO(6)\cdot SO(2)$ & \centering $SU(3)\cdot\diag(SO(2))$ & \centering $U(3)\cdot SO(2)$\tabularnewline
\centering (xiv) & \centering $SO(8)$ & \centering $SO(6)\cdot SO(2)$ & \centering $Spin(7)$ & \centering $SU(3)\cdot\diag(SO(2))$ & \centering $Spin(6)$\tabularnewline
\end{tabular}
\end{table}

\begin{table}[H]
\caption{Complexification
 of the quadruples $(\gpcpt G, \gpcpt H, \subgpcpt G, \subgpcpt H)$
 and $\gpcpt F = \subgpcpt G/\subgpcpt H$
 in Table \ref{table:GHLU}}
\hspace{-2cm}
\begin{tabular}{c|c|c|c|c|c}
& $\gpC G$ & $\gpC H$  & $\subgpC G$ & $\subgpC H$
& $\gpcpt F$
\\
\hline
${\text{(i)}}_{\mathbb{C}}$
&
$SO(2n+2,{\mathbb{C}})$
&
$SO(2n+1,{\mathbb{C}})$
&
$GL(n+1,{\mathbb{C}})$
&
$GL(n,{\mathbb{C}})$
&
${\mathbb{C}}^{\times}$
\\
${\text{(ii)}}_{\mathbb{C}}$
&
$SO(2n+2,{\mathbb{C}})$
&
$GL(n+1,{\mathbb{C}})$
&
$SO(2n+1,{\mathbb{C}})$
&
$GL(n,{\mathbb{C}})$
&
$OG_n({\mathbb{C}})$
\\
${\text{(iii)}}_{\mathbb{C}}$
&
$SL(2n+2,{\mathbb{C}})$
&
$GL(2n+1,{\mathbb{C}})$
&
$Sp(n+1,{\mathbb{C}})$
&
$Sp(n,{\mathbb{C}}) \cdot {\mathbb{C}}^{\times}$
&
$S_{{\mathbb{C}}}^2$
\\
${\text{(iv)}}_{\mathbb{C}}$
&
$SL(2n+2,{\mathbb{C}})$
&
$Sp(n+1,{\mathbb{C}})$
&
$GL(2n+1,{\mathbb{C}})$
&
$Sp(n,{\mathbb{C}}) \cdot {\mathbb{C}}^{\times}$
&
$G S_n({\mathbb{C}})$
\\
${\text{(v)}}_{\mathbb{C}}$
&
$SO(4n+4,{\mathbb{C}})$
&
$SO(4n+3,{\mathbb{C}})$
&
$Sp(1,{\mathbb{C}}) \cdot Sp(n+1,{\mathbb{C}})$
&
$Sp(n,{\mathbb{C}}) \cdot {\diag}(Sp(1,{\mathbb{C}}))$
&
$S_{{\mathbb{C}}}^3$
\\
${\text{(vi)}}_{\mathbb{C}}$
&
$SO(16,{\mathbb{C}})$
&
$SO(15,{\mathbb{C}})$
&
$Spin(9,{\mathbb{C}})$
&
$Spin(7,{\mathbb{C}})$
&
$S_{{\mathbb{C}}}^7$
\\
${\text{(vii)}}_{\mathbb{C}}$
&
$SO(8,{\mathbb{C}})$
&
$Spin(7,{\mathbb{C}})$
&
$SO(5,{\mathbb{C}}) \cdot SO(3,{\mathbb{C}})$
&
$SL(2,{\mathbb{C}}) \cdot {\diag}(SL(2,{\mathbb{C}}))$
&
$S_{{\mathbb{C}}}^3$
\\
${\text{(viii)}}_{\mathbb{C}}$
&
$SO(7,{\mathbb{C}})$
&
$G_2({\mathbb{C}})$
&
$SO(5,{\mathbb{C}}) \cdot SO(2,{\mathbb{C}})$
&
$SL(2,{\mathbb{C}}) \cdot {\diag}(SO(2,{\mathbb{C}}))$
&
$S_{\mathbb{C}}^3$
\\
${\text{(ix)}}_{\mathbb{C}}$
&
$SO(7,{\mathbb{C}})$
&
$G_2({\mathbb{C}})$
&
$SO(6,{\mathbb{C}})$
&
$SL(3,{\mathbb{C}})$
&
${\mathbb{C}}^{\times}$
\\
${\text{(x)}}_{\mathbb{C}}$
&
$SO(7,{\mathbb{C}})$
&
$SO(6, {\mathbb{C}})$
&
$G_2({\mathbb{C}})$
&
$SL(3,{\mathbb{C}})$
&
$\{\operatorname{pt}\}$
\\
${\text{(xi)}}_{\mathbb{C}}$
&
$SO(8,{\mathbb{C}})$
&
$Spin(7, {\mathbb{C}})$
&
$SO(7,{\mathbb{C}})$
&
$G_2({\mathbb{C}})$
&
$\{\operatorname{pt}\}$
\\
${\text{(xii)}}_{\mathbb{C}}$
&
$SO(8,{\mathbb{C}})$
&
$SO(7, {\mathbb{C}})$
&
$Spin(7,{\mathbb{C}})$
&
$G_2({\mathbb{C}})$
&
$\{\operatorname{pt}\}$
\\
${\text{(xiii)}}_{\mathbb{C}}$
&
$SO(8,{\mathbb{C}})$
&
$Spin(7, {\mathbb{C}})$
&
$SO(6,{\mathbb{C}})\cdot SO(2,{\mathbb{C}})$
&
$SL(3,{\mathbb{C}}) \cdot {\diag}({\mathbb{C}}^{\times})$
&
${\mathbb{C}}^{\times}$\\
${\text{(xiv)}_{\mathbb{C}}}$
&
$SO(8,{\mathbb{C}})$
&
$SO(6,{\mathbb{C}}) \cdot SO(2,{\mathbb{C}})$
&
$Spin(7,{\mathbb{C}})$
&
$SL(3,{\mathbb{C}}) \cdot {\diag}({\mathbb{C}}^{\times})$
&
$OG_3({\mathbb{C}})$
\end{tabular}
\label{table:GHLC}
\end{table}%

In Table \ref{table:GHLC}, 
 we have used the following notation:
\begin{align*}
O G_n({\mathbb{C}})
:=
& O(2n,{\mathbb{C}})/G L(n,{\mathbb{C}}), 
\\
G S_n({\mathbb{C}})
:=
& G L(2n,{\mathbb{C}})/S p(n,{\mathbb{C}}), 
\\
S_{\mathbb{C}}^n
:=
& \{(z_1, \cdots, z_{n+1})
\in {\mathbb{C}}^{n+1}
:
\sum_{j=1}^{n+1} z_j^2=1
\}.  
\end{align*}

\vskip 2pc
\begin{table}[H]
\caption{Real forms of the quintuples in Table \ref{table:GHLC}}
\hspace{-2cm}
\begin{tabular}{c|c|c|c|c|c}
& $\gpR G$ & $\gpR H$  & $\subgpR G$ & $\subgpR H$ & $\gpR F$
\\
\hline
${\text{(i)}}_{\mathbb{R}}$
&
$SO(2p,2q)$
&
$SO(2p,2q-1)$
&
$U(p,q)$
&
$U(p,q-1)$
&
$S^1$
\\
${\text{(i)}}_{\mathbb{R}}$
&
$SO(n,n)$
&
$SO(n,n-1)$
&
$GL(n,{\mathbb{R}})$
&
$GL(n-1,{\mathbb{R}})$
&
${\mathbb{R}}$
\\
${\text{(ii)}}_{\mathbb{R}}$
&
$SO(2p,2q)$
&
$U(p,q)$
&
$SO(2p,2q-1)$
&
$U(p,q-1)$
&
$O U_{p,q-1}$
\\
${\text{(ii)}}_{\mathbb{R}}$
&
$SO(n,n)$
&
$GL(n,{\mathbb{R}})$
&
$SO(n,n-1)$
&
$GL(n-1,{\mathbb{R}})$
&
$O G_{n-1}$
\\
${\text{(iii)}}_{\mathbb{R}}$
&
$SU(2p,2q)$
&
$U(2p,2q-1)$
&
$Sp(p,q)$
&
$Sp(p,q-1) \cdot U(1)$
&
$S^2$
\\
${\text{(iii)}}_{\mathbb{R}}$
&
$SL(2n,{\mathbb{R}})$
&
$GL(2n-1,{\mathbb{R}})$
&
$Sp(n,{\mathbb{R}})$
&
$Sp(n-1,{\mathbb{R}}) \cdot G L(1,{\mathbb{R}})$
&
$S^{1,1}$
\\
${\text{(iv)}}_{\mathbb{R}}$
&
$SU(2p,2q)$
&
$Sp(p,q)$
&
$U(2p,2q-1)$
&
$Sp(p,q-1) \cdot U(1)$
&
$U S_{p,q-1}$
\\
${\text{(iv)}}_{\mathbb{R}}$
&
$SL(2n,{\mathbb{R}})$
&
$Sp(n,{\mathbb{R}})$
&
$GL(2n-1,{\mathbb{R}})$
&
$Sp(n-1,{\mathbb{R}}) \cdot GL(1,{\mathbb{R}})$
&
$G S_{n-1}$
\\
${\text{(v)}}_{\mathbb{R}}$
&
$SO(4p,4q)$
&
$SO(4p,4q-1)$
&
$Sp(p,q) \cdot Sp(1)$
&
$Sp(p,q-1) \cdot \diag(Sp(1))$
&
$S^3$
\\
${\text{(vi)}}_{\mathbb{R}}$
&
$SO(8,8)$
&
$SO(8,7)$
&
$Spin(8,1)$
&
$Spin(7)$
&
$S^7$
\\
${\text{(vii)}}_{\mathbb{R}}$
&
$SO(4,4)$
&
$Spin(4,3)$
&
$SO(4,1) \cdot SO(3)$
&
$SU(2) \cdot \diag(SU(2))$
&
$S^3$
\\
${\text{(viii)}}_{\mathbb{R}}$
&
$SO(4,3)$
&
$G_2({\mathbb{R}})$
&
$SO(4,1) \cdot SO(2)$
&
$SU(2) \cdot \diag(SO(2))$
&
$S^3$
\\
${\text{(viii)}}_{\mathbb{R}}$
&
$SO(4,3)$
&
$G_2({\mathbb{R}})$
&
$SO(2,3) \cdot SO(2)$
&
$SL(2,{\mathbb{R}}) \cdot \diag(SO(2))$
&
$S^{2,1}$
\\
${\text{(viii)}}_{\mathbb{R}}$
&
$SO(4,3)$
&
$G_2({\mathbb{R}})$
&
$SO(3,2) \cdot SO(1,1)$
&
$SL(2,{\mathbb{R}}) \cdot \diag(SO(1,1))$
&
$S^{2,1}$
\\
${\text{(ix)}}_{\mathbb{R}}$
&
$SO(4,3)$
&
$G_2({\mathbb{R}})$
&
$SO(3,3)$
&
$SL(3,{\mathbb{R}})$
&
${\mathbb{R}}$
\\
${\text{(ix)}}_{\mathbb{R}}$
&
$SO(4,3)$
&
$G_2({\mathbb{R}})$
&
$SO(4,2)$
&
$SU(2,1)$
&
$S^1$
\\
${\text{(x)}}_{\mathbb{R}}$
&
$SO(4,3)$
&
$SO(3,3)$
&
$G_2({\mathbb{R}})$
&
$SL(3,{\mathbb{R}})$
&
$\{\operatorname{pt}\}$
\\
${\text{(x)}}_{\mathbb{R}}$
&
$SO(4,3)$
&
$SO(4,2)$
&
$G_2({\mathbb{R}})$
&
$SU(2,1)$
&
$\{\operatorname{pt}\}$
\\
${\text{(xi)}}_{\mathbb{R}}$
&
$SO(4,4)$
&
$Spin(4,3)$
&
$SO(4,3)$
&
$G_2({\mathbb{R}})$
&
$\{\operatorname{pt}\}$
\\
${\text{(xii)}}_{\mathbb{R}}$
&
$SO(4,4)$
&
$SO(4,3)$
&
$Spin(4,3)$
&
$G_2({\mathbb{R}})$
&
$\{\operatorname{pt}\}$
\\
${\text{(xiii)}}_{\mathbb{R}}$
&
$SO(4,4)$
&
$Spin(4,3)$
&
$SO(4,2) \cdot SO(2)$
&
$SU(2,1) \cdot \diag(SO(2))$
&
$S^1$
\\
${\text{(xiii)}}_{\mathbb{R}}$
&
$SO(4,4)$
&
$Spin(4,3)$
&
$SO(3,3) \cdot SO(1,1)$
&
$SL(3,{\mathbb{R}}) \cdot \diag(SO(1,1))$
&
${\mathbb{R}}$
\\
${\text{(xiv)}}_{\mathbb{R}}$
&
$SO(4,4)$
&
$SO(4,2) \cdot SO(2)$
&
$Spin(4,3)$
&
$SU(2,1) \cdot \diag(SO(2))$
&
$O U_{2,1}$
\\
${\text{(xiv)}}_{\mathbb{R}}$
&
$SO(4,4)$
&
$SO(3,3) \cdot SO(1,1)$
&
$Spin(4,3)$
&
$SL(3,{\mathbb{R}}) \cdot \diag(SO(1,1))$
&
$OG_3$
\end{tabular}
\label{table:GHL}
\end{table}%

In Table \ref{table:GHL}, 
 we have used the following notation:
\begin{align*}
O U_{p,q}
:=
& O(2p,2q)/U(p,q), 
\\
O G_n
:=
& O(n,n)/G L(n,{\mathbb{R}}), 
\\
U S_{p,q}
:=
& U(2p,2q)/S p(p,q), 
\\
G S_n
:=
& G L(2n,{\mathbb{R}})/S p(n,{\mathbb{R}}).  
\end{align*}
We note that $O U_{p,q}$
 (or $U S_{p,q}$)
 is compact
 if and only if $p=0$ or $q=0$.  

\section{Applications to branching laws}
\label{sec:sphbr}

Branching problems ask 
 how irreducible representations $\pi$
 of a group $\gpR G$ behave
 ({\it{e.g.}}, decompose) 
 when restricted to its subgroup $\subgpR G$.  
In general, 
 branching problems of infinite-dimensional representations
 of real reductive Lie groups $\gpR G \supset \subgpR G$
 are difficult:
for instance, 
there is no general \lq\lq{algorithm}\rq\rq\
 like the finite-dimensional case.  
We apply the results 
 on invariant differential operators
 (Theorems \ref{thm:A} and \ref{thm:B})
 to branching problems. 
We shall see a trick 
 transferring results
 for finite-dimensional representations
 in the compact setting
 to those for infinite-dimensional representations 
 which are realized 
 in the space of functions
 or distributions
 on real forms $\gpR X$ 
 of $\subgpC G$-spherical homogeneous spaces $\gpC X$
 in the noncompact setting
 by the following scheme:

\[
\text{Finite-dimensional representations of compact Lie groups $\gpcpt G$ and $\subgpcpt G$}
\]
\[
   \text{\rotatebox{-90}{$\rightsquigarrow$}}
\]
\[
\text{Invariant differential operators on $\gpC X$ for the complexified groups $\gpC G$
 and $\subgpC G'$ (Theorems \ref{thm:A} and \ref{thm:B})
} 
\]
\[
   \text{\rotatebox{-90}{$\rightsquigarrow$}}
\]
\[
\text{Infinite-dimensional representations of noncompact real forms $\gpR G$
 and $\subgpR G$}
\]

\subsection{Discrete decomposability
 of restriction 
 of unitary representations}
\label{subsec:deco}
Let $\gpR G$ be a real reductive Lie group
 with maximal compact subgroup $\gpR K$.  
A $({\mathfrak {g}}, K)$-module $(\pi_K, V)$
 is said to be {\it{discretely decomposable}}
 if there exists an increasing filtration
 $\{V_n\}_{n\in {\mathbb{N}}}$
 such that $V= \cup_{n \in {\mathbb{N}}} V_n$
 and that each $V_n$ is a $({\mathfrak {g}}, K)$-module
 of finite length.  
If $\pi_K$ is the underlying  $({\mathfrak {g}}, K)$-module
 of a unitary representation $\pi$ of $\gpR G$, 
 then this condition implies 
 that $\pi$ decomposes discretely
 into a Hilbert direct sum
 of irreducible unitary representations
 of $\gpR G$
 (\cite{xkdecoaspm}).

In this section, 
 as an application of \lq\lq{global analysis 
 with hidden symmetry}\rq\rq
 ((A) and (B) in Introduction)
 to branching problems
 ((C) in Introduction), 
 we give a geometric sufficient condition 
for the restriction 
of an irreducible unitary representation
 of a reductive Lie group $\gpR G$
 not to have continuous spectrum
 when restricted to a subgroup $\subgpR G$.  
In Setting \ref{set:realFXY}, 
 we take a maximal reductive subgroup $\subgpR L$
 of $\subgpR G$
 containing $\subgpR H$, 
 and set $\gpR F:=\subgpR L/\subgpR H$
 so that we have a fibration 
 $\gpR F \to \gpR X \to \gpR Y$
 (see \eqref{eqn:FXYR}).

\begin{theorem}
[discrete decomposability of restriction]
\label{thm:deco}
Suppose we are in Setting \ref{set:realFXY}.  
Assume that $\gpC X$ is $\subgpC G$-spherical
 and that $\gpR F$ is compact.  
\begin{enumerate}
\item[{\rm{(1)}}]
{\rm{($({\mathfrak{g}}, K)$-modules)}}
Any irreducible $({\mathfrak{g}}, K)$-module $\pi_K$
 occurring 
 as a subquotient
 of the regular representation of $\gpR G$
 on the space ${\mathcal{D}}'(\gpR X)$
 of distributions
 on $\gpR X$
 is discretely decomposable
 as a $({\mathfrak{g}}', K')$-module.  

\item[{\rm{(2)}}]
{\rm{(unitary representation)}}
For any irreducible unitary representation $\pi$
 of $\gpR G$
 realized in ${\mathcal{D}}'(\gpR X)$, 
 the restriction $\pi|_{\subgpR G}$
 decomposes discretely 
 into a Hilbert direct sum 
 of irreducible unitary representations of $\subgpR G$.  

\item[{\rm{(3)}}]
{\rm{(discrete series)}}
In (2), 
 if $\pi$ is a discrete series representation
 for $\gpR G/\gpR H$, 
 then any irreducible summand
 of the restriction $\pi|_{\subgpR G}$
 is a discrete series representation
 for $\subgpR G/\subgpR H$.  
\end{enumerate}
\end{theorem}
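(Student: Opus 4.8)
The plan is to prove (1) and then deduce (2) and (3) formally. Realize the $\gpR G$-representation $\pi$ (with underlying module $\pi_K$) inside $\mathcal{D}'(\gpR X)$; for a general subquotient one first passes to a generalized $\mathcal{P}$-eigenspace, but the mechanism is the same. Since $\mathcal{P}=\mathbb{D}_{\gpC G}(\gpC X)$ commutes with all of $\gpR G$, Schur's lemma makes it act on $\pi$ through a single character $\chi_{\lambda}^{\gpC X}$, so $\pi$ lies in the joint eigenspace $\mathcal{D}'(\gpR X;{\mathcal{M}}_{\lambda})$. Fix a maximal reductive $\subgpR L$ with $\subgpR H\subset\subgpR L\subset\subgpR G$ and $\gpR F=\subgpR L/\subgpR H$, so that $\gpR F\to\gpR X\to\gpR Y$ as in \eqref{eqn:FXYR}. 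By Theorems \ref{thm:A} and \ref{thm:B} the polynomial algebra $\mathbb{D}_{\subgpC G}(\gpC X)$ equals $\mathbb{C}[\mathcal{P},\mathcal{Q}]$ with $\mathcal{Q}=\iota(\mathbb{D}_{\subgpC L}(\gpC F))$, and $\mathcal{R}=dl({\mathfrak{Z}}(\subgpC{\mathfrak{g}}))$ is contained in it by \eqref{eqn:R}; hence, once the scalars by which $\mathcal{P}$ acts on $\pi$ are substituted, every operator $dl(z)$ with $z\in{\mathfrak{Z}}(\subgpC{\mathfrak{g}})$ becomes a polynomial in the operators $\iota(Q)$, $Q\in\mathbb{D}_{\subgpC L}(\gpC F)$.

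The decisive point is that compactness of $\gpR F$ forces $\iota(\mathbb{D}_{\subgpC L}(\gpC F))$ to act with \emph{discrete} spectrum. Expanding distributions along the compact fibre of $\gpR F\to\gpR X\to\gpR Y$ splits $\mathcal{D}'(\gpR X)$ into $\subgpR G$-stable subspaces indexed by the countable set $\widehat{(\subgpC L)}_{\subgpC H}$ of fibre types, and on the $\tau$-summand each $\iota(Q)$ acts by the scalar $\chi_{\tau}(Q)$; these scalars range over a discrete set of $\mathbb{C}$-algebra homomorphisms $\mathbb{D}_{\subgpC L}(\gpC F)\to\mathbb{C}$, since $\gpC F$ is $\subgpC L$-spherical (Lemma \ref{lem:invMF}(2)) so $\mathbb{D}_{\subgpC L}(\gpC F)$ is commutative with eigenvalues governed by the finitely-many-parameter, integral family $\widehat{(\subgpC L)}_{\subgpC H}$. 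Thus $\iota(\mathbb{D}_{\subgpC L}(\gpC F))$, which commutes with $\subgpR G$, acts semisimply and with discrete spectrum on $\mathcal{D}'(\gpR X)$, hence on $\pi$, and we obtain $\pi|_{\subgpR G}=\bigoplus_{\xi}\pi^{\xi}$ into $\subgpR G$-submodules over an at most countable index set of joint eigenvalues $\xi$. On each $\pi^{\xi}$ the whole algebra $\mathbb{D}_{\subgpC G}(\gpC X)=\mathbb{C}[\mathcal{P},\mathcal{Q}]$ acts by a single character, so in particular $\mathcal{R}=dl({\mathfrak{Z}}(\subgpC{\mathfrak{g}}))$ does; equivalently $\pi^{\xi}$ carries a ${\mathfrak{Z}}(\subgpC{\mathfrak{g}})$-infinitesimal character (which, when $\gpC G$ is simple and the groups are maximal, is $\nu_{\tau}(\lambda)$ as in Theorem \ref{thm:C}). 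This is the infinitesimal discrete decomposability of the restriction.

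To upgrade infinitesimal to genuine discrete decomposability one uses real sphericity: as a real form of the $\subgpC G$-spherical space $\gpC X$, $\gpR X$ is $\subgpR G$-real spherical (Example \ref{ex:1.4}(3)), so by Fact \ref{fact:1.8} (in the $\mathcal{D}'$- and $({\mathfrak g}',K')$-versions noted after it) every irreducible $({\mathfrak g}',K')$-module occurs with finite multiplicity in $\mathcal{D}'(\gpR X)$. Since there are only finitely many irreducibles with a prescribed infinitesimal character, each $\pi^{\xi}$ is a $({\mathfrak g}',K')$-module of finite length, and the partial sums of the countable family $\{\pi^{\xi}\}$ furnish the exhaustive finite-length filtration required by the definition of discrete decomposability; this proves (1). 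Assertion (2) follows because a unitary representation whose underlying $({\mathfrak g}',K')$-module is discretely decomposable splits into a Hilbert direct sum of irreducible unitary representations \cite{xkdecoaspm}. For (3), a discrete series representation $\pi$ for $\gpR G/\gpR H$ embeds into $L^2(\gpR G/\gpR H)=L^2(\subgpR G/\subgpR H)$ via Setting \ref{set:realFXY}, so by (2) every irreducible summand of $\pi|_{\subgpR G}$ embeds into $L^2(\subgpR G/\subgpR H)$ and hence lies in $\Disc(\subgpR G/\subgpR H)$; alternatively combine (2) with Fact \ref{fact:3.1}.

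The step I expect to be the main obstacle is converting the analytic fact "the $\mathcal{Q}$-spectrum on $\pi$ is discrete" into the algebraic fact "$\pi_K$ is a genuine countable direct sum of finite-length $({\mathfrak g}',K')$-submodules": one must reconcile the fibrewise expansion on $\mathcal{D}'(\gpR X)$ with the $({\mathfrak g},K)$-module picture for $\pi_K$ (noting that $\mathcal{Q}$ commutes with $\subgpR G$ but not with $\gpR G$, so it need not preserve $\pi_K$ a priori), and then derive finite length of the eigencomponents from the finite-multiplicity theorem. It is precisely the compactness of $\gpR F$ — equivalently, that the fibration $\gpR F\to\gpR X\to\gpR Y$ absorbs the directions responsible for continuous spectrum — that makes this possible and cannot be dispensed with.
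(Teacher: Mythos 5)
You have assembled the right ingredients (the fibration with compact fibre $\gpR F$, Theorems \ref{thm:A}--\ref{thm:B} to convert $\mathcal{P}$- and $\mathcal{Q}$-finiteness into ${\mathfrak{Z}}(\subgpC{\mathfrak{g}})$-finiteness, Fact \ref{fact:1.8}, and \cite{xkdecoaspm}, Fact \ref{fact:3.1} for parts (2) and (3)), but the core of part (1) is asserted rather than proved, at exactly the point you flag as the main obstacle. The fibrewise projections onto the $\tau$-components commute with $\subgpR G$ but not with $\gpR G$, so they do not preserve the subspace $V\subset{\mathcal{D}}'(\gpR X)$ realizing $\pi_K$; what the expansion gives is only a continuous embedding of $V$ into a completed sum of $\tau$-components, and the claimed algebraic decomposition $\pi|_{\subgpR G}=\bigoplus_{\xi}\pi^{\xi}$ into $\subgpR G$-submodules of $\pi$ does not follow from it. The same difficulty already affects your opening step: an operator $D\in{\mathcal{P}}$ maps $V$ equivariantly into ${\mathcal{D}}'(\gpR X)$ but not necessarily into $V$, so Schur's lemma alone does not yield a single character of ${\mathcal{P}}$ on $\pi$. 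Finally, finite length of the putative $\pi^{\xi}$ does not follow from Fact \ref{fact:1.8} together with ``finitely many irreducibles with a given infinitesimal character'': the multiplicity bound controls embeddings of irreducibles into ${\mathcal{D}}'(\gpR X)$, not composition series of an arbitrary ${\mathfrak{Z}}(\subgpC{\mathfrak{g}})$-finite submodule, which a priori need be neither admissible nor finitely generated.

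The paper's proof sidesteps decomposing $\pi_K$ altogether. It enlarges $V$ to $\widetilde V={\mathbb{D}}_{\gpC G}(\gpC X)\cdot V$, which stays under control because ${\mathbb{D}}_{\gpC G}(\gpC X)$ is a finite module over $dl({\mathfrak{Z}}(\gpC{\mathfrak{g}}))$ (Knop \cite{kno94}; note $\gpC X$ is $\gpC G$-spherical by Lemma \ref{lem:invMF}) and ${\mathfrak{Z}}(\gpC{\mathfrak{g}})$ acts on $V$ by scalars; it then extracts one nonzero fibrewise $\tau$-component $\widetilde V_{\tau}$, on which ${\mathcal{P}}$ and ${\mathcal{Q}}$ act through a finite-dimensional algebra, hence so does ${\mathfrak{Z}}(\subgpC{\mathfrak{g}})$ by Theorem \ref{thm:A} (2); and it concludes by the general criterion of \cite{xkInvent98}: a $({\mathfrak{g}},K)$-module containing a nonzero ${\mathfrak{Z}}(\subgpC{\mathfrak{g}})$-finite $\subgpC{\mathfrak{g}}$-submodule is discretely decomposable as a $({\mathfrak{g}}',K')$-module. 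That criterion (proved via associated varieties and asymptotic $K$-supports, not by counting multiplicities) is precisely the missing bridge in your argument; you must either invoke it or prove the exhaustive finite-length filtration directly before (1) is established. Your deductions of (2) from (1) via \cite{xkdecoaspm}, and of (3) via the $L^2$-realization argument or Fact \ref{fact:3.1}, are fine once (1) is in place.
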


\begin{remark}
\label{rem:Fcpt}
{\rm{
\begin{enumerate}
\item[(1)]
In the case
 where $\subgpR H$ is compact,
 Theorem \ref{thm:deco} will be discussed in detail
 in \cite{KKspec}
 in connection to spectral analysis
 on non-Riemannian locally symmetric spaces 
 $\Gamma \backslash \gpR G/ \gpR H$, 
 see Section \ref{sec:5}.  
We note that if $\subgpR H$ is compact, 
we can take $\subgpR L$
 to be a maximal compact subgroup of $\subgpR G$
 containing $\subgpR H$
 so that $\gpR F= \subgpR L/\subgpR H$
 is compact.  
\item[(2)]
A general criterion
 for discrete decomposability of the restrictions
 of irreducible unitary representations
 was given in \cite{xkAnn98, xkInvent98}
 in terms of invariants
 of representations.  
Representations $\pi$ treated in Theorem \ref{thm:deco}
 are much limited, 
however,
 we can tell {\it{a priori}} from Theorem \ref{thm:deco}
 discrete decomposability
 of the restriction $\pi|_{\subgpR G}$
 before knowing
 what the representations $\pi$ are.  
\end{enumerate}
}}
\end{remark}

\begin{proof}[Sketch of the proof of Theorem \ref{thm:deco}]
\begin{enumerate}
\item[(1)]
Suppose $\pi_K$ is realized 
 in a subspace $V$ 
 of ${\mathcal{D}}'(\gpR X)$.  
 (We remark that $V$ is automatically contained
 in $C^{\infty}(\gpR X)$ 
 by the elliptic regularity theorem.)
Since $\gpC X$ is $\gpC G$-spherical, 
 ${\mathbb{D}}_{\gpC G}(\gpC X)$ is finitely generated
 as a $d l({\mathfrak{Z}} (\gpC {\mathfrak{g}}))$-module
 (\cite{kno94}).  
Since ${\mathfrak{Z}} (\gpC {\mathfrak{g}})$ acts on $V$
 as scalars,
 the ${\mathbb{D}}_{\gpC G}(\gpC X)$-module
 $\widetilde V:={\mathbb{D}}_{\gpC G}(\gpC X) \cdot V$ is 
 ${\mathbb{D}}_{\gpC G}(\gpC X)$-finite.

Now we consider the $\subgpR G$-equivariant fibration
$
  \gpR F \to \gpR X \to \gpR Y.  
$
Decomposing $\widetilde V$
 along the compact fiber $\gpR F=\subgpR L/\subgpR H$, 
 we see that there is an irreducible finite-dimensional representation 
 $\tau \in \operatorname{Disc}(\subgpR L/\subgpR H)$
 such that the $\tau$-component $\widetilde{V}_{\tau}$
 of $\widetilde V$ from the right is nonzero.  

Since ${\mathfrak{Z}} (\subgpC {\mathfrak{l}})$ acts on $\tau$
 as scalars, 
 the action of the subalgebra generated
 by ${\mathcal{P}}={\mathbb{D}}_{\gpC G}(\gpC X)$
 and ${\mathcal{Q}}=\iota({\mathbb{D}}_{\gpC L}(\gpC F))$
 factors through a finite-dimensional algebra,
 and so does the action of ${\mathfrak{Z}} (\subgpC {\mathfrak{g}})$
 by Theorem \ref{thm:A} (2).  
Since the $({\mathfrak {g}},K)$-module $\widetilde V$ contains
 a ${\mathfrak{Z}} (\subgpC {\mathfrak{g}})$-finite
 $\subgpR {\mathfrak{g}}$-module $\widetilde{V}_{\tau}$, 
 $\widetilde V$ is discretely decomposable
 as a $(\subgpR {\mathfrak{g}}, \subgpR K)$-module
 by \cite{xkInvent98}.  

\item[(2)]
The statement follows from (1)
 and \cite[Theorem 2.7]{xkdecoaspm}.  
\item[(3)]
The third statement follows from (1)
 and \cite[Theorem 8.6]{xkdisc}.  
\end{enumerate}
\end{proof}

\begin{example}
\label{ex:140}
Let $\gpR G=O(p,q)$,
 $\gpR H=O(p-1,q)$
 and 
\[
\gpR X
:=
\gpR G/\gpR H
\simeq
S^{p-1,q}.  
\]
In what follows, 
 $\pi$ stands for any irreducible subquotient module
 of $\gpR G$ of the regular representation on the space ${\mathcal{D}}'(\gpR X)$ of distributions, 
 and $\pi_K$ for the underlying $({\mathfrak{g}}, K)$-module.  
\begin{enumerate}
\item[{\rm{(1)}}]
{\rm{($O(2p',2q') \downarrow U(p', q')$)}}
Suppose $p=2p'$ and $q=2q'$ with $p',q' \in {\mathbb{N}}$.  
Let $\subgpR G=U(p', q')$
 be a natural subgroup of $\gpR G$. 
As one can observe from Tables \ref{table:GHLU} and \ref{table:GHL}
 $\operatorname{(i)}_{\mathbb{R}}$, 
\[
  \gpR F = \subgpR L/\subgpR H
         = (U(p', q'-1) \times U(1))/U(p', q'-1)
         \simeq U(1)
\] 
 is compact
 (see also \eqref{eqn:Hopfpq}).  
By Theorem \ref{thm:deco}, 
 any $\pi_K$ is discretely decomposable
 as a $({\mathfrak{g}}', K')$-module.  

\item[{\rm{(2)}}]
{\rm{($O(4p'',4q'') \downarrow Sp(p'', q'')$)}}
Suppose $p=4p''$ and $q=4q''$
 with $p'', q''\in {\mathbb{N}}$.  
Let $\subgpR G':=Sp(p'',q'')$ be a natural subgroup of $\gpR G$.  
Then by Tables \ref{table:GHLU} and \ref{table:GHL}
 $\operatorname{(v)}_{\mathbb{R}}$, 
\[
  \gpR F = \subgpR L'/\subgpR H'
         = (Sp(p'', q''-1) \times Sp(1) \times Sp(1))/Sp(p'', q''-1) \times \diag (Sp(1))
         \simeq Sp(1)
\] 
 is compact.  
By  Theorem \ref{thm:deco}, 
 any $\pi_K$ is discretely decomposable
 as a $({\mathfrak{g}}'', K'')$-module.  
\end{enumerate}
\end{example}

\begin{remark}
{\rm{
\begin{enumerate}
\item[{\rm{(1)}}]
In the setting of Example \ref{ex:140}, 
 explicit branching laws were given in \cite{xkInvent94}
 in terms of Zuckerman derived functor modules
 $A_{\mathfrak {q}}(\lambda)$
 when $\pi$ is a discrete series representation
 for $\gpR X$, 
namely,
 when $\pi$ is an irreducible unitary representation of $\gpR G$
 which can be realized in a closed invariant subspace
 of the Hilbert space $L^2(\gpR X)$.  

\item[{\rm{(2)}}]
Any $\pi_K$ in ${\mathcal{D}}'(\gpR X)$ occurs as a subquotient
 of the most degenerate principal series representation of $\gpR G$
 that was the main object of Howe--Tan \cite{ht93}, 
 and {\it{vice versa}}.  
The restrictions $O(2p',2q') \downarrow U(p',q')$
 and $O(4p'',4q'') \downarrow Sp(p'',q'')$ are 
 discussed in \cite{ht93} from the viewpoint
 of the \lq\lq{see-saw}\rq\rq\ dual pairs.  
\end{enumerate}
}}
\end{remark}

\subsection{Branching law $SO(8,8) \downarrow Spin(1,8)$}
\label{subsec:Spin}

We apply the previous results 
 ({\it{e.g.}}, Theorems \ref{thm:20160506}, \ref{thm:C}, 
 and \ref{thm:deco})
 to find new branching laws
 of the restriction of unitary representations
 with respect to the nonsymmetric pair
\[
(\gpR G, \subgpR G)=(SO_0(8,8),Spin(1,8))
\]
 when $\subgpR G$ is realized in $\gpR G$
 {\it{via}} the spin representation.  
The subscript 0 stands for the identity component.

The main results of this section is 
 Theorem \ref{thm:Spin}, 
 which might be interesting
 on its own 
 since not much is known
 about the restriction of Zuckerman's derived functor module
 $A_{\mathfrak{q}}(\lambda)$
 with respect to pairs
 $(\gpR G, \subgpR G)$ of reductive groups except for the case
 where $(\gpR G, \subgpR G)$ is a symmetric pair 
 or there is a subgroup $\subgpR G'$
 such that $\gpR G \supset \subgpR G' \supset \subgpR G$
 is a chain 
 of symmetric pairs
 (e.g. $(\gpR G, \subgpR G', \subgpR G)
 =(O(4p,4q), U(2p,2q), Sp(p,q))$.  
 (Cf.
 \cite{xgrwa, xk:1, xkInvent94, Zuckerman60, yophd} 
 for branching laws 
 with respect to symmetric pairs).

In order to state Theorem \ref{thm:deco}, 
 we fix some notation.  
Let $\pi_{\lambda}$ ($\lambda \in {\mathbb{N}}_+$)
 be irreducible unitary representations 
 of $\gpR G=SO_0(8,8)$
 attached to minimal elliptic orbits
 in the philosophy of orbit method.  
For the reader's convenience, 
we collect some properties
 of $\pi_{\lambda}$:
\begin{enumerate}
\item[$\bullet$]
The underlying $({\mathfrak{g}}, K)$-module
 $(\pi_{\lambda})_K$ of $\pi_{\lambda}$
 is given by Zuckerman derived functor module
 $A_{\mathfrak{q}}(\lambda-7)$
 where ${\mathfrak{q}}$ is a $\theta$-stable parabolic subalgebra
 of ${\mathfrak{g}}$
 such that the normalizer of ${\mathfrak{q}}$ in $\gpR G$
 is $SO(2) \times SO_0(6,8)$.  
Concerning the $\rho$-shift of $A_{\mathfrak{q}}(\lambda)$, 
 we adopt the same normalization
 as in Vogan--Zuckerman \cite{xvoza}.  
\item[$\bullet$]
The ${\mathfrak{Z}}({\mathfrak{g}})$-infinitesimal character
 of $\pi_{\lambda}$ is 
 $(\lambda,6,5,4,3,2,1,0)$.

\item[$\bullet$]
The $K$-type formula of $\pi_{\lambda}$ is given by
\[
(\pi_{\lambda})_K
\simeq
\bigoplus_{(m,n)\in \Xi(\lambda+1)}
{\mathcal{H}}^m ({\mathbb{R}}^8)
\boxtimes
{\mathcal{H}}^n({\mathbb{R}}^8), 
\]
where we recall from \eqref{eqn:Xi}
 the definition of the parameter set $\Xi(\mu)$.  
\end{enumerate}

Let us recall the classification
 of the Harish-Chandra discrete series representation 
 for $\subgpR G=Spin(1,8)$.  
For $\varepsilon= \pm$
 and $b=(b_1, b_2, b_3, b_4) \in {\mathbb{Z}}^4$
 or ${\mathbb{Z}}^4+ \frac 1 2(1,1,1,1)$
 such that
 $b_1 \ge b_2 \ge b_3 \ge b_4\ge 1$, 
 we write $\vartheta_b^{\varepsilon}$ 
 for the discrete series representation of $\subgpR G$
 with 
\begin{align*}
&\text{Harish-Chandra parameter:
$(b_1 + \frac 5 2, b_2 + \frac 3 2, b_3 + \frac 1 2, b_4 - \frac 1 2)$,}
\\
&\text{Blattner parameter:
 $(b_1, b_2, b_3, \varepsilon b_4)$.}
\end{align*}

Then any discrete series representation of $\subgpR G$
 is of this form.  
For $k \ge l \ge 2$
 with $k \equiv l \mod 2$, 
 we set
\[
  \vartheta_{k,l} := \vartheta_{\frac 1 2(k,k,k,l)}^+.  
\]
We are ready to state a branching law
 of the unitary representation on $\pi_{\lambda}$
 with respect to the nonsymmetric pair
 $(\gpR G, \subgpR G)=(SO_0(8,8), Spin(1,8))$.

\begin{theorem}
[$SO_0(8,8) \downarrow Spin(1,8)$]
\label{thm:Spin}
For any $\lambda \in {\mathbb{N}}_+$, 
the irreducible unitary representation $\pi_{\lambda}$
 of $\gpR G=SO_0(8,8)$
 decomposes discretely
 as a representation of $\subgpR G=Spin(1,8)$
in accordance
 with the following branching rule.  
\[
  \pi_{\lambda}|_{\subgpR G}
  \simeq
  {\sum_{l=0}^{\infty}}{}^{\oplus}
  \vartheta_{\lambda+2l+1,\lambda+1}.  
\]
\end{theorem}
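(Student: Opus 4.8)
The plan is to deduce the branching law from the machinery of Sections~\ref{sec:gen}--\ref{sec:example}, applied to the quintuple of line~(vi) of Tables~\ref{table:GHLU}--\ref{table:GHL}, thereby transferring the finite-dimensional branching $SO(16)\downarrow Spin(9)$ to the noncompact setting. One works with
\[
(\gpR G,\gpR H,\subgpR G,\subgpR H,\subgpR L)=(SO_0(8,8),\ SO(8,7),\ Spin(1,8),\ Spin(7),\ Spin(8)),
\]
so that $\gpR F=\subgpR L/\subgpR H=S^7$ is compact, $\gpR Y=\subgpR G/\subgpR L=\mathbb H^8$, and $\gpC X=\subgpC G/\subgpC H=Spin(9,\mathbb C)/Spin(7,\mathbb C)$ is $\subgpC G$-spherical (Table~\ref{table:GHLC}). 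The real form $\gpR X=\gpR G/\gpR H\simeq S^7\times\mathbb R^8$ is a pseudo-Riemannian space form whose discrete series are exactly the $A_{\mathfrak q}(\mu)$ with $\mathfrak q$ as in the definition of $\pi_\lambda$; thus $\pi_\lambda\in\Disc(\gpR G/\gpR H)$, realized in $L^2(\gpR X;\mathcal M_\lambda)$. A Casimir computation from the infinitesimal character $(\lambda,6,5,4,3,2,1,0)$ of $\pi_\lambda$ shows that its $\mathbb D_{\gpC G}(\gpC X)$-eigenvalue is the parameter $\lambda$ under the normalization $\chi^X_\lambda(P_2)=\lambda^2-49$, which is precisely the normalization for which the map $\nu_\tau$ of \eqref{eqn:nutau88} was computed, now read for $\gpC X=Spin(9,\mathbb C)/Spin(7,\mathbb C)$.

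First I would invoke Theorem~\ref{thm:deco}: since $\gpC X$ is $\subgpC G$-spherical and $\gpR F$ is compact, $\pi_\lambda|_{\subgpR G}$ decomposes discretely into a Hilbert direct sum of irreducible unitary representations of $Spin(1,8)$, and since $\pi_\lambda$ is a discrete series for $\gpR X$, each summand lies in $\Disc(\subgpR G/\subgpR H)$. After checking that $\underline{\Disc}(\subgpR G/\subgpR H)$ is multiplicity-free---via the fibration $S^7\to\subgpR G/\subgpR H\to\mathbb H^8$ and Proposition~\ref{prop:1608119}, this reduces to multiplicity one for the discrete parts of $L^2(\mathbb H^8,\mathcal H^k(\mathbb R^8))$ and to their pairwise disjointness in $k$---Theorem~\ref{thm:20160506}(3) provides the double fibration, with fibres $\mathcal K_1^{-1}(\pi)=\Disc(\pi|_{\subgpR G})$ and $\mathcal K_2^{-1}(\tau)=\Disc(\subgpR G/\subgpR L,\tau)$ over $\tau\in\Disc(\subgpR L/\subgpR H)=\Disc(S^7)=\{\mathcal H^k(\mathbb R^8):k\in\mathbb N\}$. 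Hence $\Disc(\pi_\lambda|_{\subgpR G})=\bigsqcup_{k\ge0}\bigl(\mathcal K_1^{-1}(\pi_\lambda)\cap\mathcal K_2^{-1}(\mathcal H^k(\mathbb R^8))\bigr)$, each intersection being empty or a single representation.

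The heart of the argument is to identify that single representation for each $k$. By Theorem~\ref{thm:C}(3) its infinitesimal character is $\nu_{\mathcal H^k}(\lambda)=\tfrac12(\lambda,k+5,k+3,k+1)\in\mathbb C^4/W(B_4)$, while the classification recalled before Theorem~\ref{thm:Spin} gives the infinitesimal character of $\vartheta^\varepsilon_b$ as its Harish-Chandra parameter $\tfrac12(2b_1+5,2b_2+3,2b_3+1,2b_4-1)$. The condition $\mathcal K_2(\vartheta)=\mathcal H^k(\mathbb R^8)$ constrains the $Spin(8)$-Blattner data so that $b_1=b_2=b_3=\tfrac k2$, i.e.\ $\vartheta=\vartheta_{k,m}$ for some $m$; the infinitesimal-character identity then forces $2b_4-1=\lambda$, i.e.\ $\vartheta=\vartheta_{k,\lambda+1}$. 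This is a genuine (unitarizable) discrete series precisely when $k\ge\lambda+1$ and $k\equiv\lambda+1\pmod2$, and for such $k$ the intersection is nonempty: restricting the $K$-type formula of $\pi_\lambda$ from $K=SO(8)\times SO(8)$ to $K'=Spin(8)$ through the two half-spin embeddings, the unique solution $(m,n)$ of $m+n=k$, $m-n=\lambda+1$ has non-negative integer entries---hence lies in $\Xi(\lambda+1)$---exactly under those two conditions, and the corresponding $K'$-type contains the minimal $Spin(8)$-type $F(Spin(8),\tfrac12(k,k,k,\lambda+1))$ of $\vartheta_{k,\lambda+1}$. Reindexing $k=\lambda+2l+1$ with $l\ge0$ yields $\pi_\lambda|_{\subgpR G}\simeq\sum_{l=0}^\infty{}^\oplus\vartheta_{\lambda+2l+1,\lambda+1}$.

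I expect the main obstacle to be the bookkeeping in the last two steps: identifying precisely the $\mathcal K_2$-fibre $\Disc(\subgpR G/\subgpR L,\mathcal H^k(\mathbb R^8))$ and proving the multiplicity-one property require a careful treatment of the $L^2$-realization of discrete series of $Spin(1,8)$ over $\mathbb H^8$ in $\mathcal H^k$-twisted bundles (Hotta--Parthasarathy, Schmid), together with the attendant $\rho$-shifts, and a triality-sensitive description of the $K'$-decomposition of $\pi_\lambda$. By contrast, the structural inputs---discrete decomposability (Theorem~\ref{thm:deco}), the double fibration (Theorem~\ref{thm:20160506}), and the infinitesimal-character matching (Theorems~\ref{thm:A}--\ref{thm:C})---should be comparatively formal once they are in hand.
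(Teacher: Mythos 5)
Your plan follows the paper's own route in its structural skeleton: the quintuple ${\text{(vi)}}_{\mathbb{R}}$ of Tables \ref{table:GHLU}--\ref{table:GHL}, discrete decomposability from Theorem \ref{thm:deco} (since $\gpR F=S^7$ is compact and $\gpC X$ is $\subgpC G$-spherical), the double fibration of Theorem \ref{thm:20160506}, multiplicity-freeness of $\underline{\Disc}(\subgpR G/\subgpR H)$, and the infinitesimal-character matching of Theorem \ref{thm:C} via \eqref{eqn:nutau88}, which pins down $l=\lambda+1$ inside each $\mathcal{K}_2$-fibre. Up to that point your outline and the paper's proof coincide.

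The divergence, and the genuine gap, is in how you close the argument. You propose to prove nonemptiness of $\mathcal{K}_1^{-1}(\pi_\lambda)\cap\mathcal{K}_2^{-1}(\tau_k)$ for each admissible $k$ by restricting the $K$-type formula of $\pi_\lambda$ from $SO(8)\times SO(8)$ to $Spin(8)$ through the two half-spin embeddings and exhibiting the minimal $K'$-type of $\vartheta_{k,\lambda+1}$. As written this is not a proof: containing a single $K'$-type does not imply that $\vartheta_{k,\lambda+1}$ occurs as a summand (one must additionally argue, using discrete decomposability and Lemma \ref{lem:SpinK}, that no other possible constituent carries that $K'$-type), and the triality-sensitive decomposition of ${\mathcal{H}}^m({\mathbb{R}}^8)\boxtimes{\mathcal{H}}^n({\mathbb{R}}^8)$ under $Spin(8)$ that your claim rests on is left undone. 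The paper sidesteps this step entirely: by Fact \ref{fact:3.1} (the bijection \eqref{eqn:DDD}), every element of $\Disc(\subgpR G/\subgpR H)$ occurs in the restriction of some $\pi_{\lambda'}\in\Disc(\gpR G/\gpR H)$, so $\mathcal{K}_1$ is defined on all of $\Disc(\subgpR G/\subgpR H)$; once the latter set is identified as $\{\vartheta_{k,l}\}$ via \eqref{eqn:K2tau}, the infinitesimal-character computation forces $\mathcal{K}_1(\vartheta_{k,l})=\pi_{l-1}$, and surjectivity onto $\{\vartheta_{k,\lambda+1}:k\ge\lambda+1,\ k\equiv\lambda+1\bmod 2\}$ is automatic --- no $K$-type branching from $\gpR G$ to $\subgpR G$ is needed. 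Similarly, the computations you defer to Hotta--Parthasarathy/Schmid-type $L^2$-realizations (the fibres $\mathcal{K}_2^{-1}(\tau_k)=\Disc(\subgpR G/\subgpR L,\tau_k)$ and multiplicity-freeness) are obtained in the paper by elementary means: the Blattner-type $K'$-type formula for the discrete series $\vartheta_b^{\varepsilon}$ of $Spin(1,8)$ (Lemma \ref{lem:SpinK}) together with Frobenius reciprocity gives $\mathcal{K}_2^{-1}(\tau_k)=\{\vartheta_{k,l}:2\le l\le k,\ l\equiv k\bmod 2\}$, from which multiplicity-freeness is read off. So keep your framework, but replace the final $K'$-type argument by the appeal to Fact \ref{fact:3.1}, and carry out the $\mathcal{K}_2$-side bookkeeping via Lemma \ref{lem:SpinK} rather than the heavier machinery you anticipate.
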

\begin{remark}
{\rm{
In general, 
 if $\pi$ is a Harish-Chandra discrete series representation
 of a real reductive Lie group $\gpR G$, 
 then any irreducible summand
 of the restriction $\pi|_{\subgpR G}$
 to a reductive subgroup $\subgpR G$
 is a Harish-Chandra discrete series representation of $\subgpR G$
 (\cite{xkdisc}).  
Theorem \ref{thm:Spin} shows
 that the converse statement is not always true
 because $\pi_{\lambda}$ is a nontempered representation of $\gpR G$
 whereas any $\vartheta_{k,l}$ is a Harish-Chandra discrete series 
 of $\subgpR G$.  
}}
\end{remark}
For the proof of Theorem \ref{thm:Spin}, 
 we compute explicitly the double fibration
 in Theorem \ref{thm:20160506}.  
We begin with an explicit $K$-type formula of $\vartheta_{b}^{\varepsilon}$:
\begin{lemma}
\label{lem:SpinK}
Suppose $\vartheta_b^{\varepsilon}$ is the (Harish-Chandra)
 discrete series representation
of $\subgpR G=Spin(1,8)$
 with Blattner parameter $(b_1,b_2,b_3, \varepsilon b_4)$.  
Then the restriction of $\vartheta_b^{\varepsilon}$
 to a maximal compact subgroup $\subgpR L=Spin(8)$
 of $\subgpR G$
 decomposes as 
\[
\vartheta_b^{\varepsilon}|_{Spin(8)}
\simeq
{\sum_{\mu \in Z(b)}}^{\oplus}
F(Spin(8), (\mu_1, \mu_2, \mu_3, \varepsilon \mu_4))
\]
where, 
 for $b=(b_1, b_2, b_3, b_4)$, 
 we set 
\[
Z(b) :=
\{\mu \in {\mathbb{Z}}^4 +b:
\mu_1 \ge b_1 \ge \mu_2 \ge b_2 \ge \mu_3 \ge b_3 \ge \mu_4 \ge b_4 \}.  
\]
\end{lemma}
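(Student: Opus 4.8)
The plan is to obtain the $Spin(8)$-decomposition of $\vartheta_b^{\varepsilon}$ from Blattner's formula (the Hecht--Schmid theorem), starting from the fact --- already recorded in the statement --- that the Blattner parameter (highest weight of the lowest $Spin(8)$-type) of $\vartheta_b^{\varepsilon}$ is $(b_1,b_2,b_3,\varepsilon b_4)$. Write $\mathfrak{g}'_{\mathbb C}=\mathfrak{so}(9,{\mathbb C})$ (type $B_4$) and $\mathfrak{l}'_{\mathbb C}=\mathfrak{so}(8,{\mathbb C})$ (type $D_4$), and choose coordinates $e_1,\dots,e_4$ on the common compact Cartan so that the positive noncompact roots are $\Delta_n^+=\{e_1,e_2,e_3,e_4\}$, the positive compact roots are $\{e_i\pm e_j : i<j\}$, and in these coordinates $\rho_c=(3,2,1,0)$. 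Since $e_1,\dots,e_4$ are linearly independent, the Kostant partition function ${\mathcal Q}={\mathcal Q}_{\Delta_n^+}$ is $0$--$1$-valued: ${\mathcal Q}(\xi)=1$ iff every coordinate of $\xi$ is a nonnegative integer. With $\lambda_0=(b_1,b_2,b_3,\varepsilon b_4)$, Blattner's formula then reads
\[
 \dim\operatorname{Hom}_{Spin(8)}\!\bigl(F(Spin(8),\mu),\,\vartheta_b^{\varepsilon}\bigr)
 =\sum_{w\in W(D_4)}\operatorname{sgn}(w)\,{\mathcal Q}\bigl(w(\mu+\rho_c)-(\lambda_0+\rho_c)\bigr),
\]
the sum running over $D_4$-dominant weights $\mu$.

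I would first treat $\varepsilon=+$. Then $\lambda_0+\rho_c=(b_1+3,b_2+2,b_3+1,b_4)$ is strictly decreasing with all entries $\ge b_4>0$ (hence $B_4$-regular dominant), while $\mu+\rho_c=(\mu_1+3,\mu_2+2,\mu_3+1,\mu_4)$ has its first three entries strictly decreasing and positive and $|\mu_4|<\mu_3+1$. A term is nonzero only when $w(\mu+\rho_c)\ge\lambda_0+\rho_c$ coordinatewise; because $\lambda_0+\rho_c$ lies in the open positive orthant this forces $w(\mu+\rho_c)$ to lie there as well, and --- since $W(D_4)$ permits only an \emph{even} number of sign changes while the first three coordinates of $\mu+\rho_c$ are already positive --- this is impossible unless $\mu_4>0$ and $w$ is a pure permutation (so $\mu_4\le 0$ contributes nothing, matching $Z(b)\subset\{\mu_4\ge b_4>0\}$). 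One is left with an alternating sum over ${\mathfrak S}_4$ of $0$--$1$ terms. Here the standard argument applies: if $\mu\in Z(b)$ one checks, using the interlacing $\mu_1\ge b_1\ge\mu_2\ge b_2\ge\mu_3\ge b_3\ge\mu_4\ge b_4$ together with $\rho_{c,i}-\rho_{c,i+1}=1$, that the \emph{only} permutation $w$ with $w(\mu+\rho_c)\ge\lambda_0+\rho_c$ is the identity, contributing $+1$; if $\mu\notin Z(b)$ the contributing permutations cancel in pairs and the sum is $0$. This yields $\vartheta_b^{+}|_{Spin(8)}\simeq\sum_{\mu\in Z(b)}^{\oplus}F(Spin(8),\mu)$, which is the claim for $\varepsilon=+$. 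The case $\varepsilon=-$ follows by conjugating $\vartheta_b^{+}$ by an element of $O(1,8)$ that normalizes $Spin(1,8)$ and induces the outer automorphism of $Spin(8)$ given on the torus by $e_4\mapsto -e_4$: this carries $\vartheta_b^{+}$ to $\vartheta_b^{-}$ and $F(Spin(8),(\mu_1,\mu_2,\mu_3,\mu_4))$ to $F(Spin(8),(\mu_1,\mu_2,\mu_3,-\mu_4))$, transforming the right-hand side exactly as required.

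The step I expect to be the main obstacle is the combinatorial core of the $\varepsilon=+$ case: verifying that the alternating sum over $W(D_4)$ collapses precisely to the four interlacing inequalities defining $Z(b)$. This needs careful bookkeeping of (i) the parity constraint on sign changes in $W(D_4)$ at the last coordinate, (ii) the $\rho_c$-shift (which is why $e_i\pm e_j$ gives $\rho_c=(3,2,1,0)$ rather than $(4,3,2,1)$, so the match with $Z(b)$ is not literally the identity at the level of coordinates), and (iii) the boundary cases $\mu_4=0$ and equalities among the $b_i$ or among the $\mu_i$, where a priori several signed permutations satisfy the nonnegativity condition and one must exhibit the sign-reversing pairing that cancels the unwanted ones. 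As an alternative to quoting Hecht--Schmid directly, one may realize $\vartheta_b^{\varepsilon}$ as a cohomologically induced module $A_{\mathfrak b}(\,\cdot\,)$ from a $\theta$-stable Borel subalgebra and invoke the Vogan--Zuckerman $K$-type formula; the resulting computation is identical.
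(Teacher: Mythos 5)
Your proposal is correct; note that the paper offers no proof of this lemma at all (it is used as the classical $K$-type formula for discrete series of the real rank one group $Spin(1,8)$), and the route you take --- Hecht--Schmid/Blattner with the $0$--$1$ partition function attached to the linearly independent positive noncompact roots $e_1,\dots,e_4$, together with the outer automorphism $e_4\mapsto -e_4$ coming from $O(1,8)$ to reduce $\varepsilon=-$ to $\varepsilon=+$ --- is exactly the standard derivation one would write down. The combinatorial step you flag as the main obstacle is in fact routine once sign changes are eliminated as you indicate: writing $x=\mu+\rho_c$ and $y=(b_1,b_2,b_3,b_4)+\rho_c$, the remaining alternating sum over ${\mathfrak S}_4$ equals $\det (M_{ij})$ with $M_{ij}=1$ if $x_j\ge y_i$ and $0$ otherwise (integrality being automatic once $\mu\in{\mathbb Z}^4+b$, and all other cosets contributing $0$); since $x$ and $y$ are strictly decreasing, each row of $M$ is a prefix vector of ones, so the determinant is $1$ precisely when the prefix lengths are $1,2,3,4$, i.e.\ $x_i\ge y_i>x_{i+1}$ for all $i$, which unfolds to the interlacing $\mu_1\ge b_1\ge\mu_2\ge b_2\ge\mu_3\ge b_3\ge\mu_4\ge b_4$, and is $0$ otherwise because two equal rows force cancellation in pairs --- this also disposes of the boundary cases $\mu_4=0$ and equalities among the $b_i$ or $\mu_i$ without further case analysis.
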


For $k \in {\mathbb{N}}$, 
 we set  $\tau_k:=F(Spin(8), \frac1 2(k,k,k,k))$.  
The unitary representation of $\subgpR L$
 on $L^2(\subgpR L/\subgpR H)=L^2(Spin(8)/Spin(7))$
 is multiplicity-free,
 and we have
\[
   \underline{\operatorname{Disc}} (\subgpR L/\subgpR H)
   =
   \operatorname{Disc} (\subgpR L/\subgpR H)
   =
   \{\tau_k: k \in {\mathbb{N}}\}.  
\]
Let 
${\mathcal{W}}_{\tau_k}
= \subgpR G \times_{\subgpR L} \tau_k$
 be the homogeneous vector bundle
 over the 8-dimensional hyperbolic space
 $\gpR Y:= \subgpR G/\subgpR L =Spin(1,8)/Spin(8)$.  

\begin{proposition}
Let $k \in {\mathbb{N}}$.  
There are at most finitely many discrete series representations
 for $L^2(\gpR Y, {\mathcal{W}}_{\tau_k})$, 
and they are given as follows,
 where the sum is multiplicity-free:
\[
  L_d^2(\gpR Y, {\mathcal{W}}_{\tau_k})
\simeq
\bigoplus_{2 \le l \le k, l\equiv k \mod 2} \vartheta_{k,l}.  
\]
\end{proposition}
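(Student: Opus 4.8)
\medskip
The plan is to reduce the assertion to the Plancherel description of the discrete spectrum of a homogeneous vector bundle over the rank-one Riemannian symmetric space $\gpR Y = \subgpR G/\subgpR L = Spin(1,8)/Spin(8) = {\mathbb{H}}^8$, and then to read off the answer from the explicit $Spin(8)$-type formula of Lemma~\ref{lem:SpinK}.

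First I would record that $\subgpR L = Spin(8)$ is a maximal compact subgroup of $\subgpR G = Spin(1,8)$ and that $\operatorname{rank}\subgpR G = \operatorname{rank}\subgpR L = 4$, so that $\subgpR G$ possesses genuine (square-integrable) Harish-Chandra discrete series. For such a Riemannian pair, a bounded $\subgpR G$-intertwiner of an irreducible unitary representation $\vartheta$ into $L^2(\gpR Y,{\mathcal{W}}_{\tau_k}) = \operatorname{Ind}_{\subgpR L}^{\subgpR G}\tau_k$ is determined by the $\subgpR L$-homomorphism from $\vartheta$ to $\tau_k$ obtained by evaluation at the base point, and the $L^2$-condition forces the associated generalized matrix coefficients of $\vartheta$ to be square-integrable on $\gpR Y$. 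Hence $\vartheta$ contributes to $L^2(\gpR Y,{\mathcal{W}}_{\tau_k})_d$ if and only if $\vartheta$ is a Harish-Chandra discrete series of $\subgpR G$ with $[\vartheta|_{\subgpR L}:\tau_k]\neq 0$, and then with multiplicity exactly $[\vartheta|_{\subgpR L}:\tau_k]$; in particular the discrete spectrum is finite for each fixed $\tau_k$. (This is the vector-bundle Plancherel statement for ${\mathbb{H}}^8$; alternatively the direction that matters follows from Frobenius reciprocity for $\operatorname{Ind}_{\subgpR L}^{\subgpR G}$ together with the $L^2$-realization of discrete series by matrix coefficients, and the converse direction — that no merely tempered or non-tempered representation sneaks into the discrete spectrum — is the one classical point that should be cited rather than reproved.)

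It then remains to solve a combinatorial matching problem: for which discrete series $\vartheta_b^{\varepsilon}$ of $Spin(1,8)$ does the $Spin(8)$-type $\tau_k=F(Spin(8),\tfrac12(k,k,k,k))$ occur, and with what multiplicity? By Lemma~\ref{lem:SpinK} this happens precisely when there is $\mu\in Z(b)$ with $(\mu_1,\mu_2,\mu_3,\varepsilon\mu_4)=\tfrac12(k,k,k,k)$, the multiplicity being the number of such $\mu$. Since $\mu_4\ge b_4\ge 1>0$ for every $\mu\in Z(b)$, the equation $\varepsilon\mu_4=\tfrac k2$ forces $\varepsilon=+$ and $\mu=\tfrac12(k,k,k,k)$; plugging this into the defining chain of inequalities $\mu_1\ge b_1\ge\mu_2\ge b_2\ge\mu_3\ge b_3\ge\mu_4\ge b_4$ of $Z(b)$ forces $b_1=b_2=b_3=\tfrac k2$ and $1\le b_4\le\tfrac k2$, while $\mu\in{\mathbb{Z}}^4+b$ forces $\tfrac k2-b_4\in{\mathbb{Z}}$. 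Writing $l:=2b_4$, these conditions amount exactly to $2\le l\le k$ and $l\equiv k\pmod 2$, i.e. $\vartheta_b^{+}=\vartheta_{k,l}$ in the notation of the excerpt; and for such $b$ the vector $\mu$ is uniquely determined, so the multiplicity equals one. (When $k\le 1$ no admissible $b$ exists, consistently with the empty index set, e.g. $L^2(\gpR Y)$ itself has no discrete spectrum.)

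Combining the two steps gives
\[
  L^2_d(\gpR Y,{\mathcal{W}}_{\tau_k})
  \;\simeq\;
  \bigoplus_{\substack{2\le l\le k\\ l\equiv k\bmod 2}}\vartheta_{k,l},
\]
which is finite and multiplicity-free, as claimed. The only substantive input beyond bookkeeping is Lemma~\ref{lem:SpinK}, already granted; the step I expect to require the most care is the first one — identifying the discrete part of the vector-bundle $L^2$-space with the Harish-Chandra discrete series of $\subgpR G$ containing $\tau_k$, with the stated multiplicity — although for the rank-one space ${\mathbb{H}}^8$ this is entirely classical.
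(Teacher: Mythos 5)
Your proposal is correct and follows essentially the same route as the paper: the paper's proof likewise applies Lemma \ref{lem:SpinK} to see that $\tau_k$ occurs in $\vartheta_b^{\varepsilon}$ exactly when $\varepsilon=+$ and $b=\tfrac12(k,k,k,l)$ with $2\le l\le k$, $l\equiv k \bmod 2$, and then invokes Frobenius reciprocity for the bundle over $\gpR Y=Spin(1,8)/Spin(8)$. You merely make explicit the classical identification of the discrete spectrum of $L^2(\gpR Y,{\mathcal{W}}_{\tau_k})$ with Harish-Chandra discrete series weighted by $[\vartheta|_{\subgpR L}:\tau_k]$, which the paper leaves implicit.
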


\begin{proof}
By Lemma \ref{lem:SpinK}, 
 $\tau_k$ occurs in $\vartheta_b^{\varepsilon}$ as a $K$-type
 if and only if $\varepsilon=+$
 and $b=\frac 1 2(k,k,k,l)$
 for some $l \in 2 {\mathbb{Z}}+k$
 with $2\le l \le k$, 
namely, 
 $\vartheta_b^{\varepsilon}=\vartheta_{k,l}$.  
Thus the proposition follows from the Frobenius reciprocity.  
\end{proof}
We have thus shown
\begin{align}
\label{eqn:K2tau}
&{\mathcal{K}}_2^{-1}(\tau_k)
=
\{
\vartheta_{k,l}
:
2 \le l \le k, l \equiv k \mod 2
\}, 
\\
\notag
&\underline{\operatorname{Disc}} (\subgpR G/\subgpR H)
= \bigcup_{k \in {\mathbb{N}}} {\mathcal{K}}_2^{-1}(\tau_k)
=\{\vartheta_{k,l}:(k,l)\in \Xi(2)\}, 
\end{align}
where we recall from \eqref{eqn:Xi}
 for the definition of $\Xi(\mu)$.  
In particular, 
 discrete series for $\subgpR G/\subgpR H$
 is multiplicity-free, 
 {\it{i.e.}}, 
$\underline{\operatorname{Disc}} (\subgpR G/\subgpR H)
 = 
 {\operatorname{Disc}} (\gpR G/\gpR H)$.

On the other hand, 
 we recall the geometry
 $\gpR X = \gpR G/ \gpR H$
 where $\gpR H=SO_0(7,8)$
 and a realization $\pi_{\lambda}$ 
 in the regular representation 
 $L^2(\gpR X)$:
\begin{enumerate}
\item[$\bullet$]
$\operatorname{Disc}(\gpR G/\gpR H)
=\{\pi_{\lambda}: {\lambda} \in {\mathbb{N}}_+\}$.   

\item[$\bullet$]
$\gpR X:= \gpR G/\gpR H
\simeq S^{8,7}$
 carries a pseudo-Riemannian metric of signature $(8, 7)$, 
normalized so that the sectional curvature is constant
 equal to $-1$
 (see \eqref{eqn:Spq}).  
Then $\gpR G$ acts isometrically on the pseudo-Riemannian space form
 $\gpR X \simeq S^{8,7}$, 
 and the Laplacian $\square_{\gpR X}$ acts 
 as the scalar $\lambda^2-49$
 on the representation space of $\pi_{\lambda}$ in $L^2(\gpR X)$.  
\end{enumerate}
Therefore,
 the double fibration
 of Theorem \ref{thm:20160506}
 amounts to 

\begin{alignat*}{4}
&                        
&&\{\vartheta_{k,l} \in \dual{Spin(1,8)}: (k,l) \in \Xi(2) \}
&&
&&
\\
& {\mathcal{K}}_1 \swarrow &&                                     &&\searrow {\mathcal{K}}_2&&
\\
\{\pi_{\lambda} \in \dual{SO_0(8,8)}: \lambda \in {\mathbb{N}}_+\}
&
&&
&&
&&\{\tau_k \in \dual{Spin(8)}: k \in {\mathbb{N}}\}.  
\end{alignat*}
We already know the map ${\mathcal{K}}_2$ explicitly
 by \eqref{eqn:K2tau}.  
Let us find the map ${\mathcal{K}}_1$ explicitly
 by using Theorem \ref{thm:C}.  
We recall
 that the branching law of the restriction $\pi_{\lambda}|_{\subgpR G}$
 is nothing but to determine the fiber of the projection ${\mathcal{K}}_1$.

Suppose $\vartheta_{k,l} \in {\mathcal{K}}_1^{-1}(\pi_{\lambda})$.  
Since ${\mathcal{K}}_2(\vartheta_{k,l})=\tau_k$, 
 the ${\mathfrak{Z}}(\subgpC{\mathfrak{g}})$-infinitesimal character
 of $\vartheta_{k,l}$ is subject to Theorem \ref{thm:C}.  
By \eqref{eqn:nutau88}, 
 we have
\[
\frac 1 2(k+5,k+3,k+1,l-1)
\equiv
 \frac 1 2(\lambda,k+5,k+3,k+1)
\mod
W(B_4) \simeq {\mathfrak{S}}_4 \ltimes ({\mathbb{Z}}_2)^4.  
\]
Hence $\lambda=l-1$, 
 and ${\mathcal{K}}_1(\vartheta_{k,\lambda+1})=\pi_{\lambda}$.  
Thus the fiber of ${\mathcal{K}}_1$ is given by 
\[
{\mathcal{K}}_1^{-1}(\pi_{\lambda})
=
\{\vartheta_{k,\lambda+1}:
\lambda+1 \le k, k \equiv \lambda+1 \mod 2\}.  
\]
Now Theorem \ref{thm:Spin} is proved.  

\section{Application to spectral analysis
 on non-Riemannian locally symmetric spaces $\Gamma \backslash \gpR G/\gpR H$}
\label{sec:5}
In this section we discuss briefly
 an application of Theorem \ref{thm:C}
 to the analysis on {\it{non-Riemannian}} locally symmetric spaces
 $\Gamma \backslash \gpR G/\gpR H= \Gamma \backslash \gpR X$.

We begin with a brief review
 on the geometry.  
Suppose that a discrete group $\Gamma$ acts continuously on $\gpR X$.  
We recall that the action is said to be {\it{properly discontinuous}}
 if any compact subset of $\gpR X$ meets 
 only finitely many of its $\Gamma$-translates.  
If $\Gamma$ acts properly discontinuously and freely,
 the quotient $\Gamma \backslash \gpR X$ is of Hausdorff topology
 and carries
 a natural $C^{\infty}$-manifold structure
 such that
\[
     \gpR X \to \Gamma \backslash \gpR X
\]
 is a covering map.  
The quotient 
$
   \Gamma \backslash \gpR X = \Gamma \backslash \gpR G/\gpR H
$ is said
 to be a {\it{Clifford--Klein form of}} $\gpR X=\gpR G/\gpR H$.

Suppose $\gpR X = \gpR G/\gpR H$
 with $\gpR H$ noncompact.  
Then not all discrete subgroups of $\gpR G$ act properly discontinuously:
 for instance,
 de Sitter space $S^{n,1}=O(n+1,1)/O(n,1)$ does not admit 
 any infinite properly discontinuous action
 of isometries (Calabi--Markus phenomenon \cite{CM}).  
Also, 
 infinite subgroups of $\gpR H$
 never act properly discontinuously on $\gpR X$,
 because the origin $o:= e \gpR H \in \gpR X$ is a fixed point.  
In fact, 
determining which subgroups act properly discontinuously
 is a delicate question,
 which was first considered 
 in full generality 
 in \cite{kob89}
 in the late 1980s;
we refer to \cite{ky05} for a survey.

A large and important class of examples is constructed as follows
 (see \cite{kob89}):
\begin{definition}
[standard Clifford--Klein form]
\label{def:standard}
{\rm{
The quotient $\Gamma \backslash \gpR X$ of $\gpR X$ 
 by a discrete subgroup $\Gamma$
 of $\gpR G$ is said to be {\it{standard}}
 if $\Gamma$ is contained in some reductive subgroup $\subgpR G$
 of $\gpR G$ 
 acting properly on $\gpR X$.  
}}
\end{definition}

Any $\gpC G$-invariant holomorphic differential operator
 on $\gpC X$ defines a $\gpR G$-invariant
 (in particular, $\Gamma$-invariant) differential operator
 on $\gpR X$ by restriction,
 and hence induces a differential operator,
 to be denoted by $D_{\Gamma}$ on $\Gamma\backslash \gpR X$.  
Given $\lambda \in \invHom{\mathbb{C}\text{-alg}}{{\mathbb{D}}_{\gpC G}(\gpC X)}{\mathbb C}$, 
 we set the space of joint eigenfunctions
\[
 C^{\infty}(\Gamma \backslash \gpR X;{\mathcal{M}}_{\lambda})
:=
\{ f \in C^{\infty}(\Gamma \backslash \gpR X)
:
D_{\Gamma} f = \lambda(D) f 
\quad
\text{for all }
 D \in {\mathbb{D}}_{\gpC G}(\gpC X)
\}.  
\]
There has been an extensive study
 on spectral analysis $\Gamma \backslash \gpR X$
 when $\gpR X$ is a reductive symmetric space $\gpR G/\gpR H$
 under additional assumptions:
\begin{enumerate}
\item[$\bullet$]
$\Gamma = \{ e \}$, or 
\item[$\bullet$]
$\gpR H$ is a maximal compact subgroup.  
\end{enumerate}
However, 
 not much is known about $C^{\infty}(\Gamma \backslash \gpR X;{\mathcal{M}}_{\lambda})$
 when $\gpR X = \gpR G/\gpR H$
 with $\gpR H$ noncompact.  
In fact, 
 if we try to attack a problem
 of spectral analysis on $\Gamma \backslash \gpR G/\gpR H$
 in the general case
 where $\gpR H$ is noncompact
 and $\Gamma$ is infinite, 
then new difficulties 
 may arise from several points of view:
\begin{enumerate}
\item[(1)]
Geometry. \enspace
The $\gpR G$-invariant pseudo-Riemannian structure on $\gpR X=\gpR G/\gpR H$ 
 is not Riemannian anymore,
 and discrete groups
 of isometries of $\gpR X$
 do not always act properly discontinuously
 on such $\gpR X$
 as we discussed above.  
\item[(2)]
Analysis. \enspace
The Laplacian $\Delta_{\gpR X}$ on ${\Gamma}\backslash \gpR X$
 is not an elliptic differential operator.  
Furthermore, 
 it is not clear 
 if $\Delta_{\gpR X}$ has a self-adjoint extension 
 on $L^2({\Gamma}\backslash \gpR X)$.  
\item[(3)]
Representation theory. \enspace
If $\Gamma$ acts properly discontinuously on $\gpR X=\gpR G/\gpR H$
 with $\gpR H$ noncompact, 
 then the volume of $\Gamma \backslash \gpR G$ is infinite, 
 and the regular representation $L^2(\Gamma \backslash \gpR G)$
 may have infinite multiplicities.  
In turn,  
 the group $\gpR G$ may not have a good control
 of functions on $\Gamma \backslash \gpR G$.  
\end{enumerate}

Let us discuss a connection of the spectral analysis
 on a non-Riemannian locally homogeneous space $\Gamma \backslash \gpR X$
 with the results
 in the previous section.

Suppose that we are in Setting \ref{set:realFXY}.  
This means that a reductive subgroup $\subgpR G$ of $\gpR G$
 acts transitively on $\gpR X$
 and $\gpR X \simeq \subgpR G/ \subgpR H$
 where $\subgpR H = \subgpR G \cap \gpR H$.  
Then we have:
\begin{proposition}
[{\cite{kob89}}]
\label{prop:proper}
If $\subgpR H$ is compact, 
 then $\subgpR G$ acts properly on $\gpR X$, 
 and consequently,
 any torsion-free discrete subgroup $\Gamma$
 of $\subgpR G$ acts properly discontinuously and freely,
 yielding a standard Clifford--Klein form 
 $\Gamma \backslash \gpR X$.  
In particular,
 there exists a compact standard Clifford--Klein form
 of $\gpR X$
 by taking a torsion-free cocompact $\Gamma$ in $\subgpR G$.  
\end{proposition}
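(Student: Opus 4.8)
The plan is to isolate the one substantive point --- properness of the $\subgpR G$-action on $\gpR X$ --- and then obtain everything else from standard facts about discrete groups and lattices. Since we are in Setting~\ref{set:realFXY}, there is a $\subgpR G$-equivariant diffeomorphism $\gpR X \simeq \subgpR G/\subgpR H$ with $\subgpR H = \subgpR G \cap \gpR H$, so it suffices to prove that the left translation action of $\subgpR G$ on $\subgpR G/\subgpR H$ is proper whenever $\subgpR H$ is compact. First I would recall the general principle that the action of a locally compact group $L$ on $L/H$ is proper if and only if $H$ is compact. For the direction we need, one checks that the map $\subgpR G \times \gpR X \to \gpR X \times \gpR X$, $(g,x)\mapsto (gx,x)$, is proper: given a compact set $C_1 \times C_2 \subset \gpR X \times \gpR X$, use openness of the projection $p\colon \subgpR G \to \subgpR G/\subgpR H$ to find compact $K_1, K_2 \subset \subgpR G$ with $p(K_i)\supset C_i$ (cover each $C_i$ by finitely many images of relatively compact open subsets of $\subgpR G$); then the return set $\{g \in \subgpR G : g C_1 \cap C_2 \neq \emptyset\}$ is contained in $K_2\,\subgpR H\,K_1^{-1}$, which is compact because $\subgpR H$ is. This yields properness of the $\subgpR G$-action.

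Next I would pass to a discrete subgroup $\Gamma \subset \subgpR G$. Properness of the ambient action at once gives that $\Gamma$ acts properly discontinuously: for any compact $C \subset \gpR X$, the set $\{g \in \subgpR G : gC \cap C \neq \emptyset\}$ is compact and therefore meets the discrete set $\Gamma$ in a finite set. If moreover $\Gamma$ is torsion-free, it acts freely: the stabilizer in $\subgpR G$ of any point of $\gpR X \simeq \subgpR G/\subgpR H$ is a conjugate of $\subgpR H$, hence compact, so its intersection with the discrete torsion-free group $\Gamma$ is a finite torsion-free group, i.e.\ trivial. Consequently $\Gamma$ acts properly discontinuously and freely, $\Gamma \backslash \gpR X$ carries a smooth manifold structure with $\gpR X \to \Gamma \backslash \gpR X$ a covering map, and since $\Gamma$ is contained in the reductive subgroup $\subgpR G$, which acts properly on $\gpR X$, this quotient is a standard Clifford--Klein form in the sense of Definition~\ref{def:standard}.

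Finally, for the existence of a \emph{compact} such quotient I would invoke two classical results: Borel's theorem that a reductive Lie group such as $\subgpR G$ contains a cocompact lattice, and Selberg's lemma that a finitely generated linear group has a torsion-free subgroup of finite index; combining them produces a torsion-free cocompact discrete subgroup $\Gamma$ of $\subgpR G$, to which the previous paragraph applies. For such $\Gamma$ the quotient $\Gamma \backslash \gpR X \simeq \Gamma \backslash \subgpR G / \subgpR H$ is compact, being the image of the compact space $\Gamma \backslash \subgpR G$ under the projection with compact fibre $\subgpR H$.

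The argument is soft throughout: the properness statement reduces to the standard criterion recalled above, and the only genuinely external inputs are the existence theorems of Borel and Selberg in the last step --- this is the part I would regard as the ``main obstacle,'' though only in the sense of citing the right structure theory rather than of any real difficulty. The one mild technical point to handle with care is the lifting of compact subsets of $\subgpR G/\subgpR H$ to compact subsets of $\subgpR G$, which uses nothing beyond openness of $p$ and local compactness.
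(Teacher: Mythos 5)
Your proposal is correct, and it is essentially the argument behind the cited result: the paper itself gives no proof (it quotes the proposition from \cite{kob89}), and the standard proof there is exactly what you wrote --- properness of the $\subgpR G$-action on $\subgpR G/\subgpR H$ because the isotropy $\subgpR H$ is compact, hence proper discontinuity for discrete $\Gamma$, freeness from torsion-freeness via compact point stabilizers, and existence of compact quotients from Borel's theorem combined with Selberg's lemma. The only point to state with a little care is that Borel's theorem is formulated for (linear) semisimple groups, so for a reductive $\subgpR G$ one should reduce to the semisimple part or note that in all the relevant cases (Table \ref{table:GHL} with $\subgpR H$ compact) $\subgpR G$ has compact center, which is immaterial here.
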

{}From now,
 we assume 
 that $\subgpR H$ is compact
 and that the complexification $\gpC X$ is $\subgpC G$-spherical.  
(We can read from 
 Table \ref{table:GHL}
 the list of quadruples
 $(\gpR G, \gpR H, \subgpR G, \subgpR H)$
 satisfying these assumptions.)

Take a maximal compact subgroup $\subgpR K$ of $\subgpR G$
 containing $\subgpR H$.  
The group $\subgpR K$ plays the same role with $\subgpR L$
 in Section \ref{subsec:overgroup},
 and we set $\gpR F:=\subgpR K/\subgpR H$.  
For each $(\tau, W) \in \dual{\subgpR K}$, 
 we form a vector bundle 
\[
  {\mathcal{W}}_{\tau}
  :=\Gamma \backslash \subgpR G \times_{\subgpR K} W
\]
over the Riemannian locally symmetric space
 $\Gamma \backslash \gpR Y:= \Gamma \backslash \subgpR G/\subgpR K$.

For a ${\mathbb{C}}$-algebra homomorphism
 $\nu:{\mathfrak{Z}}(\subgpC {\mathfrak {g}}) \to {\mathbb{C}}$, 
we define a subspace of $C^{\infty}(\Gamma \backslash \gpR Y, {\mathcal{W}}_{\tau})$
 by 
\[
 C^{\infty}(\Gamma \backslash \gpR Y, {\mathcal{W}}_{\tau};{\mathcal{N}}_{\nu})
:=\{f \in C^{\infty}(\Gamma \backslash \gpR Y, {\mathcal{W}}_{\tau}):
   d l (z) f=\nu(z) f 
   \quad\text{for all }z \in {\mathfrak{Z}}(\subgpC {\mathfrak{g}})
\}, 
\]
which may be regarded
 as a $\subgpR G$-submodule
 $C^{\infty}(\Gamma \backslash \subgpR G; {\mathcal{N}}_{\nu})$ 
 of the regular representation of $\subgpR G$
 on $C^{\infty}(\Gamma \backslash \subgpR G)$.

Suppose now $(\tau,W) \in {\operatorname{Disc}}(\subgpR K/\subgpR H)$.  
By Lemma \ref{lem:invMF}, 
 we have
\[
\dim_{\mathbb{C}} \operatorname{Hom}_{\subgpR K}
(\tau, C^{\infty}(\subgpR K/\subgpR H))=1, 
\]
and therefore there is a natural map
\[
i_{\tau}:
  C^{\infty}(\Gamma \backslash \gpR Y, {\mathcal{W}}_{\tau})
  \to
  C^{\infty}(\Gamma \backslash \gpR X).  
\]

Here is another application of Theorem \ref{thm:C}
 to the fiber bundle $\gpR F \to \Gamma \backslash \gpR X \to \Gamma \backslash \gpR Y$
 (see \cite{KKspec} for details).  
\begin{theorem}
\label{thm:D}
Suppose we are in Setting \ref{set:realFXY}.  
Assume that $\subgpC G$ is simple  
 and $\gpC X$ is $\subgpC G$-spherical.  
Let $\Gamma$ be a torsion-free discrete subgroup 
 of $\subgpR G$
 so that the locally homogeneous space $\Gamma \backslash \gpR X$ is standard.  

\begin{enumerate}
\item[{\rm{(1)}}]
Let $\nu_{\tau}$ be the map
\[
  \nu_{\tau}:
\operatorname{Hom}_{{\mathbb{C}}\operatorname{-alg}}
({\mathbb{D}}_{\gpC G}(\gpC X),{\mathbb{C}})
\to 
\operatorname{Hom}_{{\mathbb{C}}\operatorname{-alg}}
({\mathfrak{Z}}(\subgpC {\mathfrak {g}}),{\mathbb{C}}),
\quad
\lambda \mapsto \nu_{\tau}(\lambda)
\]
given in Theorem \ref{thm:C}
 for $\tau \in\Disc(\subgpR K/\subgpR H)$.  
Then, 
 the following two conditions
 on $\varphi \in C^{\infty}(\Gamma \backslash \gpR Y, {\mathcal{W}}_{\tau})$ are equivalent:
\begin{enumerate}
\item[{\rm{(i)}}]
$i_{\tau}(\varphi) \in C^{\infty}(\Gamma \backslash \gpR X;{\mathcal{M}}_{\lambda})$, 
\item[{\rm{(ii)}}]
$\varphi \in C^{\infty}(\Gamma \backslash \gpR Y, {\mathcal{W}}_{\tau};{\mathcal{N}}_{\nu_{\tau}(\lambda)})$.  
\end{enumerate}
\item[{\rm{(2)}}]
For every $\lambda \in \operatorname{Hom}_{{\mathbb{C}}\operatorname{-alg}}
({\mathbb{D}}_G(X),{\mathbb{C}})$, 
 the joint eigenspace $C^{\infty}(\Gamma \backslash \gpR X;{\mathcal{M}}_{\lambda})$
 contains 
\[
  \bigoplus_{\tau \in \widehat F}
  i_{\tau} (C^{\infty}(Y_{\Gamma}, {\mathcal{W}}_{\tau}; {\mathcal{N}}_{\nu(\tau)}))
\]
as a dense subspace.  
\end{enumerate}
\end{theorem}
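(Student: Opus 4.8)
The plan is to run both assertions through the compact-fiber bundle
\[
   \gpR F \,\to\, \Gamma\backslash\gpR X \,\to\, \Gamma\backslash\gpR Y,
\]
feeding in the description of ${\mathbb{D}}_{\subgpC G}(\gpC X)$ from Theorems~\ref{thm:A}--\ref{thm:C}, with $\subgpR K$ in the role of the subgroup $\subgpR L$ of Sections~\ref{subsec:overgroup} and~\ref{subsec:3.3} (cf.\ Remark~\ref{rem:Fcpt}).  First I would set up the geometry: since $\subgpR H$ is compact, $\subgpR G$ acts properly on $\gpR X=\subgpR G/\subgpR H$, a torsion-free $\Gamma\subset\subgpR G$ acts properly discontinuously and freely (Proposition~\ref{prop:proper}), and the $\subgpR G$-equivariant fibration $\gpR F=\subgpR K/\subgpR H\to\gpR X\to\gpR Y=\subgpR G/\subgpR K$ descends to a fiber bundle over $\Gamma\backslash\gpR Y$ with the same compact fiber $\gpR F$.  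Decomposing smooth sections fiberwise by the Peter--Weyl theorem, and using that $\gpR F$ is multiplicity-free, so that $C^{\infty}(\gpR F)[\tau]\simeq\tau$ for $\tau\in\widehat{F}$ (Lemma~\ref{lem:invMF}), one identifies $C^{\infty}(\Gamma\backslash\gpR X)$ with the Fr\'echet completion of $\bigoplus_{\tau\in\widehat{F}}i_{\tau}\bigl(C^{\infty}(\Gamma\backslash\gpR Y,{\mathcal{W}}_{\tau})\bigr)$, each $i_{\tau}$ being injective.  This is the avatar on which everything rests.

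Next I would translate the three subalgebras into this picture.  Restricted to $\gpR X$ and descended to $\Gamma\backslash\gpR X$, the elements of ${\mathcal{P}}={\mathbb{D}}_{\gpC G}(\gpC X)$ are commuting operators which, being left-$\subgpR G$-invariant and commuting with ${\mathcal{Q}}=\iota({\mathbb{D}}_{\subgpC K}(\gpC F))$, preserve each summand $i_{\tau}(C^{\infty}(\Gamma\backslash\gpR Y,{\mathcal{W}}_{\tau}))$; on that summand ${\mathcal{Q}}$ acts by the scalar character $\chi_{\tau}^{\gpC F}$ (Schur's lemma, since ${\mathbb{D}}_{\subgpC K}(\gpC F)$ is commutative and $C^{\infty}(\gpR F)[\tau]$ is $\subgpR K$-irreducible; these scalars separate the $\tau$ because $\subgpR K$ is compact).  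Hence every element of the image of $i_{\tau}$ is automatically a ${\mathcal{Q}}$-eigenfunction for $\chi_{\tau}^{\gpC F}$.  For $z\in{\mathfrak{Z}}(\subgpC {\mathfrak{g}})$ one has $dl(z)=dr(z)$, which is bi-invariant, descends to $\Gamma\backslash\gpR X$, and corresponds under $i_{\tau}$ exactly to the ${\mathfrak{Z}}(\subgpC {\mathfrak{g}})$-action (via $dl$) on $C^{\infty}(\Gamma\backslash\gpR Y,{\mathcal{W}}_{\tau})$ defining the spaces ${\mathcal{N}}_{\nu}$; thus $\varphi\in C^{\infty}(\Gamma\backslash\gpR Y,{\mathcal{W}}_{\tau};{\mathcal{N}}_{\nu})$ if and only if $i_{\tau}(\varphi)$ is an eigenfunction of ${\mathcal{R}}=dl({\mathfrak{Z}}(\subgpC {\mathfrak{g}}))$ with character $\nu$ (under ${\mathfrak{Z}}(\subgpC {\mathfrak{g}})\overset{dl}{\simeq}{\mathcal{R}}$).

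Part~(1) now follows from Theorems~\ref{thm:A} and~\ref{thm:C}: since ${\mathbb{D}}_{\subgpC G}(\gpC X)={\mathbb{C}}[{\mathcal{P}},{\mathcal{Q}}]={\mathbb{C}}[{\mathcal{P}},{\mathcal{R}}]={\mathbb{C}}[{\mathcal{Q}},{\mathcal{R}}]$, a character of ${\mathbb{D}}_{\subgpC G}(\gpC X)$ is determined by its restriction to any two of the three subalgebras, and $\nu_{\tau}$ is the map characterized by the property that the ${\mathbb{D}}_{\subgpC G}(\gpC X)$-character restricting to $\chi_{\lambda}^{\gpC X}$ on ${\mathcal{P}}$ and to $\chi_{\tau}^{\gpC F}$ on ${\mathcal{Q}}$ restricts to $\nu_{\tau}(\lambda)$ on ${\mathcal{R}}\simeq{\mathfrak{Z}}(\subgpC {\mathfrak{g}})$.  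Fix $\tau,\varphi$ and put $f=i_{\tau}(\varphi)$, which we may assume nonzero.  If $f$ is ${\mathcal{P}}$-eigen for $\chi_{\lambda}^{\gpC X}$, then together with its automatic ${\mathcal{Q}}$-eigenvalue $\chi_{\tau}^{\gpC F}$ it is a ${\mathbb{D}}_{\subgpC G}(\gpC X)$-eigenfunction whose character restricts to $\nu_{\tau}(\lambda)$ on ${\mathcal{R}}$, i.e.\ (ii) holds; conversely, if $f$ is ${\mathcal{R}}$-eigen for $\nu_{\tau}(\lambda)$, then with its automatic ${\mathcal{Q}}$-eigenvalue it is a ${\mathbb{D}}_{\subgpC G}(\gpC X)$-eigenfunction whose character, being the unique one with those restrictions to ${\mathcal{Q}}$ and ${\mathcal{R}}$, restricts to $\chi_{\lambda}^{\gpC X}$ on ${\mathcal{P}}$, i.e.\ (i) holds.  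For part~(2), the inclusion ``$\supseteq$'' is immediate from~(1); for density, expand $g\in C^{\infty}(\Gamma\backslash\gpR X;{\mathcal{M}}_{\lambda})$ fiberwise as $g=\sum_{\tau}g_{\tau}$ (convergent in $C^{\infty}$), note each $g_{\tau}$ is automatically ${\mathcal{Q}}$-eigen for $\chi_{\tau}^{\gpC F}$ and, since ${\mathcal{P}}$ preserves the $\tau$-summand, still ${\mathcal{P}}$-eigen for $\chi_{\lambda}^{\gpC X}$, hence by~(1) equals $i_{\tau}(\varphi_{\tau})$ with $\varphi_{\tau}\in C^{\infty}(\Gamma\backslash\gpR Y,{\mathcal{W}}_{\tau};{\mathcal{N}}_{\nu_{\tau}(\lambda)})$; so $g$ lies in the $C^{\infty}$-closure of the asserted direct sum.

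The hard part will be the first step: proving that the fiberwise Peter--Weyl expansion of an arbitrary smooth function, or joint eigenfunction, on $\Gamma\backslash\gpR X$ has smooth $\tau$-components and converges in the $C^{\infty}$ topology, so that the identification of $C^{\infty}(\Gamma\backslash\gpR X)$ with the Fr\'echet completion of $\bigoplus_{\tau}i_{\tau}(\,\cdot\,)$ is legitimate.  One cannot invoke global elliptic regularity here, because the operators in ${\mathbb{D}}_{\gpC G}(\gpC X)$ are \emph{not} elliptic on the pseudo-Riemannian manifold $\Gamma\backslash\gpR X$; instead one has to exploit ellipticity only along the \emph{compact Riemannian} fibers $\gpR F$, uniformly over relatively compact neighborhoods of the base, to control the tails of the expansion via the growth of the Casimir eigenvalue on $\tau$, and then combine this with $\Gamma$-invariance.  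A secondary, more routine, point is to verify that the operator identities of Theorems~\ref{thm:A}--\ref{thm:C}, originally on $\gpC X$ and its compact real form, persist after passage to $\Gamma\backslash\gpR X$ --- which is where one uses that $\Gamma\subset\subgpR G$ and that ${\mathcal{P}},{\mathcal{Q}},{\mathcal{R}}$ all arise from $\subgpR G$-invariant (indeed bi-invariant, for ${\mathcal{R}}$) operators.
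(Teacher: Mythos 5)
The paper does not actually prove Theorem~\ref{thm:D} here: it only indicates that the theorem is an application of Theorem~\ref{thm:C} to the bundle $\gpR F \to \Gamma\backslash \gpR X \to \Gamma\backslash \gpR Y$ and defers the details to \cite{KKspec}. Your plan follows precisely that indicated route (take $\subgpR L=\subgpR K$, use the multiplicity-one embedding $i_{\tau}$ coming from Lemma~\ref{lem:invMF}, decompose along the compact fiber, and play the subalgebras ${\mathcal{P}}$, ${\mathcal{Q}}$, ${\mathcal{R}}$ of ${\mathbb{D}}_{\subgpC G}(\gpC X)$ against each other), and the algebraic core of your part~(1) is sound: the image of $i_{\tau}$ is automatically ${\mathcal{Q}}$-eigen with the character attached to $\tau$, $i_{\tau}$ intertwines the bi-invariant operators coming from ${\mathfrak{Z}}(\subgpC{\mathfrak{g}})$ with the action defining ${\mathcal{N}}_{\nu}$, and $\nu_{\tau}(\lambda)$ is by construction the restriction to ${\mathcal{R}}$ of the unique character of ${\mathbb{D}}_{\subgpC G}(\gpC X)$ extending $(\chi_{\lambda}^{\gpC X},\chi_{\tau})$, so the two implications follow from the generation statements of Theorem~\ref{thm:A}. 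Do note that the direction (ii)$\Rightarrow$(i) uses generation by ${\mathcal{Q}}$ and ${\mathcal{R}}$, i.e.\ Theorem~\ref{thm:A}(3), whose stated hypothesis is that $\gpC G$ is simple, whereas Theorem~\ref{thm:D} assumes $\subgpC G$ simple; you should track that hypothesis explicitly.

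Two further points need more than you give. First, your claim that the ${\mathcal{Q}}$-eigenvalues separate the $\tau\in\Disc(\subgpR K/\subgpR H)$ \lq\lq because $\subgpR K$ is compact\rq\rq\ is not a justification: compactness alone gives nothing. What is needed is that $\gpC F$ is $\subgpC L$-spherical (Lemma~\ref{lem:invMF}(2)) together with the surjectivity ${\mathfrak{Z}}(\subgpC{\mathfrak{l}})\to{\mathbb{D}}_{\subgpC L}(\gpC F)$ of Theorem~\ref{thm:C}(1) and a Harish-Chandra/Knop-type parametrization of the resulting characters, which shows that distinct $\tau$ have distinct joint eigenvalues; this separation is exactly what your density argument in part~(2) rests on (it is what makes ${\mathcal{P}}$ preserve each $\tau$-summand and makes the fiberwise components of a joint eigenfunction again joint eigenfunctions). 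Second, as you yourself flag, the whole scheme rests on the fiberwise Peter--Weyl expansion having smooth $\tau$-components and converging in the $C^{\infty}$ topology, with uniqueness of components; since the operators in ${\mathbb{D}}_{\gpC G}(\gpC X)$ are not elliptic on $\Gamma\backslash\gpR X$, one must indeed work with ellipticity along the compact Riemannian fibers, uniformly over compact subsets of the base. You correctly isolate this as the remaining analytic work, but until it is carried out your part~(2), and the uniqueness-of-expansion steps, are incomplete --- this is precisely the content for which the paper refers to \cite{KKspec}.
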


{\textbf{Acknowledgements}}:
This article is based on the lecture that 
the author delivered at the conference
Representations of reductive
groups in honor of Roger Howe on his 70th birthday
at Yale University, 1--5 June 2015. He
would like to express his gratitude to the organizers, 
  James Cogdell, Ju-Lee Kim,  David Manderscheid, Gregory Margulis,
 Jian-Shu Li, Cheng-Bo Zhu, and  Gregg Zuckerman 
for their warm hospitality during the stimulating conference.
He also thanks an anonymous referee 
 for his/her careful comments
 on the original manuscript.  
This work was partially supported by Grant-in-Aid for Scientific 
Research (A)
(25247006), Japan Society for the Promotion of Science
and by the Institut des Hautes {\'E}tudes Scientifiques
 (Bures-sur-Yvette).

\end{document}